\documentclass[fleqn]{article}

\usepackage[a4paper, left=2cm, right=2cm, top=3cm, bottom=3cm]{geometry}                           
\usepackage[applemac]{inputenc}                                                                    
\usepackage[T1]{fontenc}                                                                           
\usepackage[colorlinks, linkcolor=black, citecolor=black, urlcolor=black]{hyperref}                
\usepackage{ifthen}                                                                                
\usepackage{amsmath}                                                                               
\usepackage{amssymb}                                                                               
\usepackage{amsthm}                                                                                
\usepackage{upgreek}                                                                               
\usepackage{mathdots}                                                                              
\usepackage{tikz}                                                                                  

\usetikzlibrary{calc,decorations.markings,matrix,positioning}                                               

\setlength{\parindent}{0pt}                                                                        
\allowdisplaybreaks                                                                                

\swapnumbers
\theoremstyle{definition}
\newtheorem{definition}{Definition}[section]
\newtheorem{corollary}[definition]{Corollary}

\newtheorem{notation}[definition]{Notation}
\newtheorem{proposition}[definition]{Proposition}
\newtheorem{question}[definition]{Question}
\newtheorem{remark}[definition]{Remark}
\newtheorem{theorem}[definition]{Theorem}
\newtheorem*{theorem*}{Theorem}


\makeatletter
\renewcommand*\p@enumii{}                                                                          
\makeatother

\newcommand{\booktitle}[1]{\textsl{#1}}                                                            
\newcommand{\eigenname}[1]{\textsc{#1}}                                                            
\newcommand{\newnotion}[1]{\textit{#1}}                                                            

\newcommand{\nbd}{\nobreakdash-\hspace{0pt}}                                                       

\DeclareMathOperator{\Denominators}{\mathrm{Den}}                                                  
\DeclareMathOperator{\Mor}{\mathrm{Mor}}                                                           
\DeclareMathOperator{\Ob}{\mathrm{Ob}}                                                             
\newcommand{\categoricallyequivalent}{\simeq}                                                      
\newcommand{\CategoryOfObjectsWithSReplacement}[2]{{#1}_{\mathrm{Rpl}_{\text{S}}(#2)}}             
\newcommand{\comp}{\circ}                                                                          
\newcommand{\ForgetfulFunctor}{\mathrm{U}}                                                         
\newcommand{\GabrielZismanLocalisation}{\mathsf{GZ}}                                               
\newcommand{\id}{\mathrm{id}}                                                                      
\newcommand{\InducedFunctorOfSReplacementFunctor}[1][]{\hat{\mathrm{Q}}_{\text{S}\ifthenelse{\equal{#1}{}}{}{, #1}}} 
\newcommand{\InducedFunctorOfTotalSReplacementFunctor}{\hat{\bar{\mathrm{Q}}}_{\text{S}}}          
\newcommand{\Integers}{\mathbb{Z}}                                                                 
\newcommand{\isomorphic}{\cong}                                                                    
\newcommand{\LocalisationFunctor}[1][]{\mathrm{loc}^{#1}}                                          
\newcommand{\map}{\rightarrow}                                                                     
\newcommand{\Naturals}{\mathbb{N}}                                                                 
\newcommand{\SReplacementFunctor}[1][]{\mathrm{Q}_{\text{S}\ifthenelse{\equal{#1}{}}{}{, #1}}}     
\newcommand{\set}[2][]{\{\ifthenelse{\equal{#1}{}}{#2}{#1 \mid #2}\}}                              
\newcommand{\StructureCategory}[2]{{#1}_{#2}}                                                      
\newcommand{\StructureChoiceFunctor}[1]{\mathrm{I}_{#1}}                                           
\newcommand{\Stwoarrow}[3]{{#1} \rightarrow {#2} \leftarrow {#3}}                                  
\newcommand{\TotalSReplacementFunctor}{\bar{\mathrm{Q}}_{\text{S}}}                                

\tikzset{equality/.style={-, double}}
\tikzset{exists/.style={dotted}}
\tikzset{implies/.style={->, double distance=1pt}}
\tikzset{logicallyequivalent/.style={<->, double distance=1pt}}

\tikzset{diagram/.style={matrix of math nodes, row sep=#1, column sep=#1, text height=1.6ex, text depth=0.45ex, inner sep=0pt, nodes={inner sep=0.3333em}}, diagram/.default=2.5em}

\tikzset{den/.style={decoration={markings, mark=at position #1 with {\node[fill=white, inner sep=0.5pt, transform shape]{\(\scriptstyle{\approx}\)};}}, postaction=decorate}, den/.default=0.5}

\tikzset{box/.style={outer sep=0.1cm, text width=#1, text centered}, box/.default=3cm}
\tikzset{input/.style={box=#1, rounded corners=2mm, very thick, draw=yellow!50!black!50, top color=white, bottom color=yellow!50!black!20}, input/.default=3cm}
\tikzset{machine/.style={box=#1, very thick, draw=blue!50!black!50, top color=white, bottom color=blue!50!black!20}, machine/.default=3cm}
\tikzset{old machine/.style={box=#1, very thick, draw=blue!50!black!50, top color=white, bottom color=blue!50!black!10}, old machine/.default=3cm}
\tikzset{output/.style={box=#1, rounded corners=2mm, very thick, draw=green!50!black!50, top color=white, bottom color=green!50!black!20}, output/.default=3cm}
\tikzset{phantom/.style={box=#1, draw=none, color=white, white}, phantom/.default=3cm}

\title{Equivalences between localisations of categories \\ provided by replacements}
\author{Sebastian Thomas}
\date{October~10, 2018}

\begin{document}

\renewcommand{\thefootnote}{\fnsymbol{footnote}}
\footnotetext[0]{Mathematics Subject Classification 2010: 18G10, 18E35, 18G55, 55U35.}
\renewcommand{\thefootnote}{\arabic{footnote}}

\maketitle

\begin{abstract}
We give a characterisation of functors whose induced functor on the level of localisations is an equivalence and where the isomorphism inverse is induced by some kind of replacements such as projective resolutions or cofibrant replacements.
\end{abstract}

\section{Introduction} \label{sec:introduction}

To a category \(\mathcal{C}\) together with a set of distinguished morphisms, called denominators in \(\mathcal{C}\), one might attach its (Gabriel/Zisman) localisation \(\GabrielZismanLocalisation(\mathcal{C})\), that is, the universal category where the denominators in \(\mathcal{C}\) become~invertible. Given categories \(\mathcal{C}\) and \(\mathcal{D}\) together with sets of denominators and a functor \(F\colon \mathcal{C} \map \mathcal{D}\) that maps denominators in \(\mathcal{C}\) to denominators in \(\mathcal{D}\), the universal property of \(\GabrielZismanLocalisation(\mathcal{C})\) yields a functor \(\GabrielZismanLocalisation(F)\colon \GabrielZismanLocalisation(\mathcal{C}) \map \GabrielZismanLocalisation(\mathcal{D})\) such that the following quadrangle commutes, where \(\LocalisationFunctor\) denotes the canonical functor from a category with denominators to its localisation.
\[\begin{tikzpicture}[baseline=(m-2-1.base)]
  \matrix (m) [diagram]{
    \mathcal{C} & \mathcal{D} \\
    \GabrielZismanLocalisation(\mathcal{C}) & \GabrielZismanLocalisation(\mathcal{D}) \\ };
  \path[->, font=\scriptsize]
    (m-1-1) edge node[above] {\(F\)} (m-1-2)
            edge node[left] {\(\LocalisationFunctor\)} (m-2-1)
    (m-1-2) edge node[right] {\(\LocalisationFunctor\)} (m-2-2)
    (m-2-1) edge[exists] node[above] {\(\GabrielZismanLocalisation(F)\)} (m-2-2);
\end{tikzpicture}\]

As every functor is an equivalence if and only if it is dense, full and faithful, this in particular holds for the induced functor \(\GabrielZismanLocalisation(F)\). In this article, we focus on functors \(F\) with a particular property that ensures density of \(\GabrielZismanLocalisation(F)\), and establish a characterisation for \(\GabrielZismanLocalisation(F)\) to be an equivalence.

An arbitrary morphism in the Gabriel/Zisman localisation may consist of arbitrarily but finitely many numerators and denominators: Every morphism of the form \(Y \map Y'\) in \(\GabrielZismanLocalisation(\mathcal{D})\) is represented by a diagram of the form
\[\begin{tikzpicture}[baseline=(m-1-1.base)]
  \matrix (m) [diagram]{
    Y & \tilde Y_1 & Y_1 & \dots & Y_{n - 1} & Y' \\ };
  \path[->, font=\scriptsize]
    (m-1-1) edge (m-1-2)
    (m-1-3) edge (m-1-4)
            edge[den] (m-1-2)
    (m-1-5) edge (m-1-6)
            edge[den] (m-1-4);
\end{tikzpicture}\]
in \(\mathcal{D}\), where the {``}backward{''} arrows are denominators in \(\mathcal{D}\). In particular, density of \(\GabrielZismanLocalisation(F)\) means that for every object \(Y\) in \(\mathcal{D}\) there exists a diagram of the form
\[\begin{tikzpicture}[baseline=(m-1-1.base)]
  \matrix (m) [diagram]{
    F X & \tilde Y_1 & Y_1 & \dots & Y_{n - 1} & Y \\ };
  \path[->, font=\scriptsize]
    (m-1-1) edge (m-1-2)
    (m-1-3) edge (m-1-4)
            edge[den] (m-1-2)
    (m-1-5) edge (m-1-6)
            edge[den] (m-1-4);
\end{tikzpicture}\]
in \(\mathcal{D}\). Typically, in this case the {``}forward{''} arrows are also denominators in \(\mathcal{D}\), but in general even that is not guaranteed.

To obtain a suitable criterion, we suppose that density of \(\GabrielZismanLocalisation(F)\) is provided by \emph{single} denominators in~\(\mathcal{D}\), so-called \newnotion{S{\nbd}replacements}: For every object \(Y\) in \(\mathcal{D}\) there is supposed to be an object \(X\) in~\(\mathcal{C}\) and a denomina\-tor~\(q\colon F X \map Y\) in \(\mathcal{D}\).
\[\begin{tikzpicture}[baseline=(m-1-1.base)]
  \matrix (m) [diagram]{
    F X & Y \\ };
  \path[->, font=\scriptsize]
    (m-1-1) edge[den] node[above] {\(q\)} (m-1-2);
\end{tikzpicture}\]
This property will be called \newnotion{S-density} in the following. If \(F\) is S-dense and~\(\GabrielZismanLocalisation(F)\) is an equivalence, we call~\(F\) an \newnotion{S-equivalence}. With this restriction, we obtain the following result.

\begin{theorem*}[{characterisation of S-equivalences, see~\ref{cor:characterisation_of_s-equivalences_as_s-dense_s-full_and_s-faithful_morphisms_of_categories_with_denominators}}] \label{th:characterisation_of_s-equivalences}
We suppose that the denominators in \(\mathcal{D}\) are closed under composition and identities. Then \(F\) is an S-equivalence if and only if it is S-dense, S-full and S-faithful.
\end{theorem*}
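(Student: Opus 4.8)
The plan is to apply to the induced functor $\GabrielZismanLocalisation(F)$ the standard criterion that a functor is an equivalence exactly when it is dense, full and faithful, and to match each of these three properties with the corresponding {``}S{''}-property of $F$. Concretely, I would organise the argument around three assertions: $(\text{a})$ if $F$ is S\nbd dense then $\GabrielZismanLocalisation(F)$ is dense; and, under the standing hypothesis (denominators in $\mathcal D$ closed under composition and identities, $F$ S\nbd dense), $(\text{b})$ $F$ is S\nbd full if and only if $\GabrielZismanLocalisation(F)$ is full, and $(\text{c})$ $F$ is S\nbd faithful if and only if $\GabrielZismanLocalisation(F)$ is faithful. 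Note that $(\text{a})$ is genuinely one-directional: density of $\GabrielZismanLocalisation(F)$ merely yields, for each $Y$, an isomorphism $\LocalisationFunctor(Y) \isomorphic \LocalisationFunctor(F X)$ in the localisation, witnessed by a possibly long zigzag, whereas S\nbd density demands a single denominator — which is exactly why the theorem must be confined to the S\nbd dense situation.

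Assertion $(\text{a})$ is immediate and needs no hypothesis on the denominators: for $Y$ in $\mathcal D$ an S\nbd replacement $q \colon F X \map Y$ is a denominator, so $\LocalisationFunctor(q)$ is an isomorphism $\GabrielZismanLocalisation(F)(\LocalisationFunctor(X)) = \LocalisationFunctor(F X) \map \LocalisationFunctor(Y)$. For $(\text{b})$ and $(\text{c})$, observe that every object of $\GabrielZismanLocalisation(\mathcal C)$ has the form $\LocalisationFunctor(X)$, so fullness and faithfulness of $\GabrielZismanLocalisation(F)$ mean precisely surjectivity, respectively injectivity, of the maps
\[\Mor_{\GabrielZismanLocalisation(\mathcal C)}(\LocalisationFunctor(X), \LocalisationFunctor(X')) \map \Mor_{\GabrielZismanLocalisation(\mathcal D)}(\LocalisationFunctor(F X), \LocalisationFunctor(F X'))\]
induced by $\GabrielZismanLocalisation(F)$, for all $X$, $X'$ in $\mathcal C$. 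The real work is a normal-form description of the right-hand hom-sets. Using the closure hypothesis together with S\nbd density, I would show that every morphism $\LocalisationFunctor(F X) \map \LocalisationFunctor(F X')$ in $\GabrielZismanLocalisation(\mathcal D)$ is a finite composite of morphisms each represented by a single S\nbd 2\nbd arrow $\Stwoarrow{F Z}{Y}{F Z'}$ in $\mathcal D$ whose right-hand leg is a denominator, and I would record the analogous description of when two such composites represent the same morphism.

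The reduction starts from an arbitrary zigzag and repeatedly replaces a {``}valley{''} vertex $\tilde Y$ — one both of whose zigzag-edges point away from it, the leftward one thus being a denominator — by $F Z$ for a chosen S\nbd replacement $q \colon F Z \map \tilde Y$, precomposing both edges with $q$: closure under composition is exactly what keeps the leftward edge a denominator, closure under identities lets one normalise the ends of the zigzag, and a short computation in $\GabrielZismanLocalisation(\mathcal D)$ shows the represented morphism is unchanged. Once every interior vertex lies in the image of $F$, the surviving {``}peak{''} vertices exhibit the zigzag as the announced composite of S\nbd 2\nbd arrows; the equality relation on composites is generated by {``}expansions{''} along a denominator between middle objects. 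I expect this normalisation, together with the control of that equality relation, to be the main obstacle: $\mathcal D$ carries no calculus of fractions, so a general zigzag cannot be straightened to a single S\nbd 2\nbd arrow, and the {``}S{''}-properties are consequently phrased, and all the bookkeeping of denominators and relations carried out, at the level of composites.

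With the normal form in hand, $(\text{b})$ and $(\text{c})$ become translations. S\nbd fullness says that every S\nbd 2\nbd arrow in $\mathcal D$ between objects in the image of $F$ is, modulo the equality relation, the $F$\nbd image of a corresponding datum in $\mathcal C$; since a general morphism between images of $F$ is such a composite and morphisms in $\GabrielZismanLocalisation(\mathcal C)$ compose, this is exactly surjectivity of the displayed map, i.e. fullness of $\GabrielZismanLocalisation(F)$. S\nbd faithfulness says that whenever the $F$\nbd images of two such data in $\mathcal C$ become equal in $\GabrielZismanLocalisation(\mathcal D)$ they were already equal in $\GabrielZismanLocalisation(\mathcal C)$, i.e. injectivity of the displayed map, i.e. faithfulness of $\GabrielZismanLocalisation(F)$. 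Assembling: if $F$ is S\nbd dense, S\nbd full and S\nbd faithful, then by $(\text{a})$–$(\text{c})$ the functor $\GabrielZismanLocalisation(F)$ is dense, full and faithful, hence an equivalence, so $F$ is an S\nbd equivalence; conversely an S\nbd equivalence is S\nbd dense by definition and has $\GabrielZismanLocalisation(F)$ an equivalence, hence full and faithful, hence S\nbd full and S\nbd faithful by $(\text{b})$ and $(\text{c})$. (If one additionally wants the isomorphism-inverse to be induced by replacements, as in the abstract: put $Y \mapsto X_Y$ on objects for the chosen S\nbd replacements $q_Y \colon F X_Y \map Y$, define the action on morphisms by conjugating with the isomorphisms $\LocalisationFunctor(q_Y)$ — using fullness to lift and faithfulness for well-definedness — and check that the $\LocalisationFunctor(q_Y)$ assemble into the required natural isomorphism; this is not needed for the equivalence-versus-properties statement itself.)
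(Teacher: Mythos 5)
Your decomposition into (a) density, (b) fullness, (c) faithfulness reproduces part of what is in the paper -- (a) is remark~\ref{rem:s-density_implies_density_on_localisations} and (b) is exactly proposition~\ref{prop:s-density_and_s-fullness_implies_fullness_of_induced_functor}, including the valley-replacement trick, which indeed only needs multiplicativity and S-density -- but the paper deliberately does \emph{not} prove the theorem this way: it never applies the dense-full-faithful criterion to \(\GabrielZismanLocalisation(F)\) (see the remark following corollary~\ref{cor:characterisation_of_s-equivalences_as_s-dense_s-full_and_s-faithful_morphisms_of_categories_with_denominators}). Instead it constructs an explicit isomorphism inverse: the total S-replacement functor \(\TotalSReplacementFunctor F\colon \CategoryOfObjectsWithSReplacement{\mathcal{D}}{F} \map \GabrielZismanLocalisation(\mathcal{C})\), whose well-definedness and functoriality are exactly where S-fullness and S-faithfulness enter, whose invertibility on denominators uses multiplicativity, and whose passage to \(\GabrielZismanLocalisation(\CategoryOfObjectsWithSReplacement{\mathcal{D}}{F})\) via the universal property makes all relations among zigzags in the localisation invisible. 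The reason for this detour is precisely the step at which your sketch has a genuine gap: item (c).

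Concretely, S-faithfulness only says that a morphism of the special form \(\LocalisationFunctor(g)\,\LocalisationFunctor(b)^{-1}\), for a \emph{single} S-\(2\)-arrow \((g,b)\colon \Stwoarrow{FX}{\tilde Y'}{FX'}\), has at most one preimage under \(\GabrielZismanLocalisation(F)\). Your claim that this is, ``by translation'', injectivity of \(\GabrielZismanLocalisation(F)\) on hom-sets does not follow: given arbitrary \(\varphi_1,\varphi_2\colon X \map X'\) in \(\GabrielZismanLocalisation(\mathcal{C})\) with \(\GabrielZismanLocalisation(F)\varphi_1 = \GabrielZismanLocalisation(F)\varphi_2 = \psi\), you can decompose \(\psi\) into images of S-\(2\)-arrows, but \(\varphi_1\) and \(\varphi_2\) are preimages of the composite, not of the individual factors, and there is no a priori compatible decomposition of \(\varphi_1,\varphi_2\) to which S-faithfulness could be applied factorwise. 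To repair this along your lines you would need your second normal-form claim, namely that equality of such composites in \(\GabrielZismanLocalisation(\mathcal{D})\) is generated by the ``expansion'' moves you describe; but \(\mathcal{D}\) carries no calculus of fractions here, equality in the Gabriel/Zisman localisation is witnessed by chains of elementary relations passing through arbitrary zigzags (not composites of S-\(2\)-arrows, not between \(F\)-images), and no such presentation is established or available in this generality -- this unproven claim is essentially the whole difficulty, and controlling it amounts to proving the functoriality statement that the paper instead obtains via proposition~\ref{prop:well-definedness_of_total_s-replacement_functor}, corollary~\ref{cor:total_s-replacement_functor_maps_denominators_to_isomorphisms} and the universal property. (Your closing parenthetical, constructing \(Y \mapsto X_Y\) with action on morphisms defined by conjugation with the \(\LocalisationFunctor(q_Y)\), is in fact the paper's S-replacement functor \(\SReplacementFunctor[R]F\); far from being optional, carrying it out properly is the proof, and the converse direction then gets faithfulness and fullness of \(\GabrielZismanLocalisation(F)\) for free from the equivalence rather than the other way around.)
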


Here, \newnotion{S-fullness} resp.\ \newnotion{S-faithfulness} of \(F\) is defined as fullness resp.\ faithfulness of \(\GabrielZismanLocalisation(F)\) on images of \newnotion{S-\(2\)-arrows} in \(\mathcal{D}\): For all objects \(X\) and \(X'\) in \(\mathcal{C}\) and every diagram of the form
\[\begin{tikzpicture}[baseline=(m-1-1.base)]
  \matrix (m) [diagram]{
    F X & \tilde Y' & F X' \\ };
  \path[->, font=\scriptsize]
    (m-1-1) edge node[above] {\(g\)} (m-1-2)
    (m-1-3) edge[den] node[above] {\(b\)} (m-1-2);
\end{tikzpicture}\]
in \(\mathcal{D}\) there is a unique morphism \(\varphi\colon X \map X'\) in \(\GabrielZismanLocalisation(\mathcal{C})\) such that
\[\LocalisationFunctor(g) \, \LocalisationFunctor(b)^{- 1} = \GabrielZismanLocalisation(F) \varphi\]
in \(\GabrielZismanLocalisation(\mathcal{D})\). So the theorem states that being an S-equivalence can be decided by investigating morphisms in~\(\GabrielZismanLocalisation(\mathcal{D})\) that consist of precisely one numerator and precisely one denominator.

This characterisation of S-equivalences is based on the S-approximation theorem~\ref{th:s-approximation_theorem}, where an isomorphism inverse \(\InducedFunctorOfSReplacementFunctor\colon \GabrielZismanLocalisation(\mathcal{D}) \map \GabrielZismanLocalisation(\mathcal{C})\) to \(\GabrielZismanLocalisation(F)\colon \GabrielZismanLocalisation(\mathcal{C}) \map \GabrielZismanLocalisation(\mathcal{D})\) is explicitly constructed using a choice of an S{\nbd}replacement for every object in \(\mathcal{D}\).

A classical instance of this result is the fact that the inclusion of the category of bounded above complexes with entries in projective modules into the category of bounded above complexes with entries in all modules induces an equivalence between the according derived categories, where an isomorphism inverse on the derived categories is provided by pointwise projective replacements (aka projective resolutions of complexes). More generally, the inclusion of the full subcategory of cofibrant objects in a model category in the sense of \eigenname{Quillen}~\cite[ch.~I, sec.~1, def.~1]{quillen:1967:homotopical_algebra} or, even more generally, in a right derivable category in the sense of \eigenname{Cisinski}~\cite[2.22, dual of~1.1]{cisinski:2010:categories_derivables} is always an S-equivalence, where an isomorphism inverse to the induced functor on the homotopy categories is provided by cofibrant replacements.

Sufficient criteria for S-equivalences have been established by \eigenname{R{\u{a}}dulescu-Banu}~\cite[th.~5.5.1]{radulescu-banu:2006:cofibrations_in_homotopy_theory} and by \eigenname{Kahn} and \eigenname{Sujatha}~\cite[dual of th.~2.1, dual of cor.~4.4]{kahn_sujatha:2007:a_few_localisation_theorems}. Many techniques used in this article are similar to the techniques used in these two sources. In particular, to verify that \(\GabrielZismanLocalisation(F)\) is an equivalence of categories in their frameworks, \eigenname{R{\u{a}}dulescu-Banu} as well as \eigenname{Kahn} and \eigenname{Sujatha} also constructed an explicit isomorphism inverse, respectively. The advantage of these two sufficient approaches is their easier verifiability: Although S{\nbd}fullness and S{\nbd}faithfulness are particular cases of fullness and faithfulness of the induced functor on the localisation level, these properties still involve arbitrary morphisms in the localisation of the start category with denominators. As it can be hard to check S-faithfulness, it would be desirable to have a {``}decomposition{''} of this axiom into a conjunction of simpler conditions.

In his framework of left exact functors between left derivable categories, \eigenname{Cisinski} has given in~\cite[th.~3.19]{cisinski:2010:categories_derivables} a characterisation of morphisms whose right derived functor is an equivalence. Since density is (in general) obtained by zigzags of \emph{two} denominators in his theory, this approach is independent of the one presented in this article.

\paragraph*{Outline} Some preliminaries on localisations of categories are recalled in section~\ref{sec:preliminaries}. In section~\ref{sec:s-replacements}, our main tools for the construction of an isomorphism inverse to \(\GabrielZismanLocalisation(F)\), the S-replacements, are introduced. S{\nbd}equivalences and their characterising conditions are defined in section~\ref{sec:s-equivalences_and_the_characterising_conditions}. The final and main part of the article is section~\ref{sec:s-replacement_functors_and_the_s-approximation_theorem}, where an isomorphism inverse to \(\GabrielZismanLocalisation(F)\) is constructed.

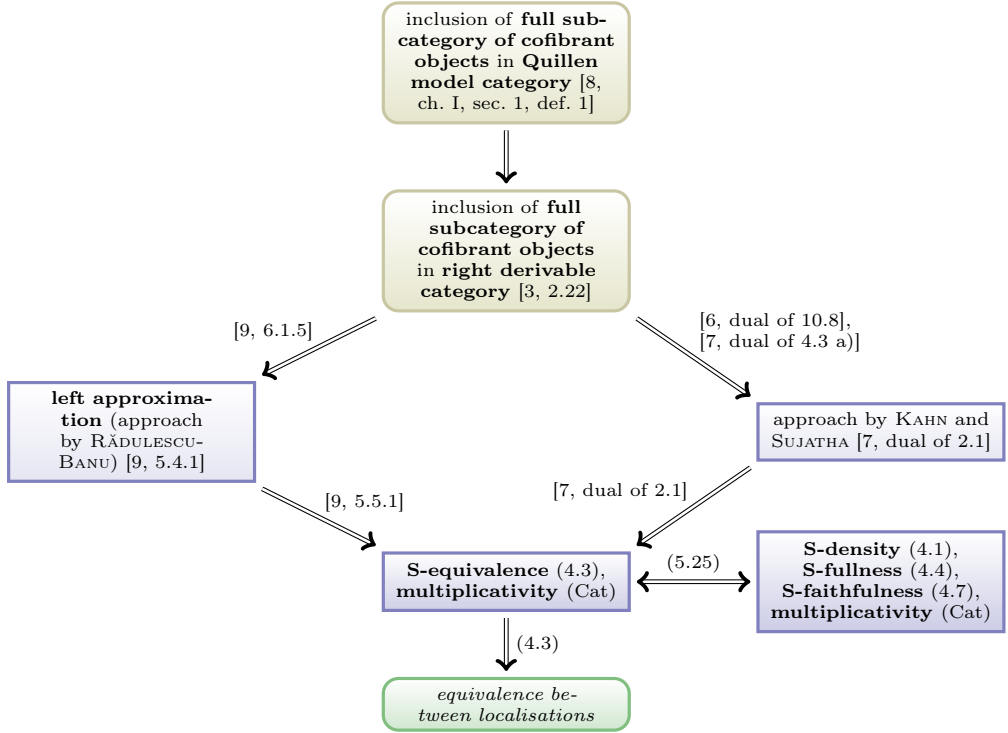
\begin{figure}[t] 
\centering
\begin{tikzpicture}[node distance=2.0em and 4.25em, font=\scriptsize]
  \node[input]                         (inc cof qmc) {inclusion of \textbf{full subcategory of cofibrant objects} in \textbf{Quillen model category}~\cite[ch.~I, sec.~1, def.~1]{quillen:1967:homotopical_algebra}};

  \node[input, below=of inc cof qmc]   (inc cof rdc) {inclusion of \textbf{full subcategory of cofibrant objects} in \textbf{right derivable category}~\cite[2.22]{cisinski:2010:categories_derivables}};
  
  \node[phantom, minimum height=4em, below=of inc cof rdc] (X) {};
  \node[old machine, left=of X]        (rbsaf) {\textbf{left approximation} (approach by \eigenname{R{\u{a}}dulescu-Banu}) \cite[5.4.1]{radulescu-banu:2006:cofibrations_in_homotopy_theory}};
  \node[old machine, right=of X]       (sskssaf) {approach by \eigenname{Kahn} and \eigenname{Sujatha} \cite[dual of~2.1]{kahn_sujatha:2007:a_few_localisation_theorems}};

  \node[machine, below=of X] (seq) {\textbf{S-equivalence} \ref{def:s-equivalence}, \textbf{multiplicativity} (Cat)};
  \node[machine, right=of seq]         (char of seq) {\textbf{S-density} \ref{def:s-density}, \textbf{S-fullness} \ref{def:s-fullness}, \textbf{S{\nbd}faithfulness} \ref{def:s-faithfulness}, \textbf{multiplicativity} (Cat)};

  \node[output, below=of seq]          (eq loc) {\textit{equivalence between localisations}};

  \path[font=\scriptsize]
    (inc cof qmc)            edge[implies] (inc cof rdc)
    (inc cof rdc.south east) edge[implies] node[above right=-2pt, align=left] {\cite[dual of~10.8]{kahn_maltsiniotis:2008:structures_de_derivabilite}, \\ \cite[dual of 4.3 a)]{kahn_sujatha:2007:a_few_localisation_theorems}} (sskssaf.north west)
    (inc cof rdc.south west) edge[implies] node[above left=-2pt] {\cite[6.1.5]{radulescu-banu:2006:cofibrations_in_homotopy_theory}} (rbsaf.north east)
    (rbsaf.south east)       edge[implies] node[above right=-2pt] {\cite[5.5.1]{radulescu-banu:2006:cofibrations_in_homotopy_theory}} (seq.north west)
    (sskssaf.south west)     edge[implies] node[above left=-2pt] {\cite[dual of~2.1]{kahn_sujatha:2007:a_few_localisation_theorems}} (seq.north east)
    (seq)        edge[logicallyequivalent] node[above] {\ref{cor:characterisation_of_s-equivalences_as_s-dense_s-full_and_s-faithful_morphisms_of_categories_with_denominators}} (char of seq)
                 edge[implies] node[right] {\ref{def:s-equivalence}} (eq loc);
\end{tikzpicture}

\caption{S-equivalences: a concept for equivalences between localisations.} \label{fig:s-equivalences_a_concept_for_equivalences_between_localisations}
\end{figure}

\subsection*{Conventions and notations} \label{sec:introduction:conventions_and_notations}

We use the following conventions and notations.

\begin{itemize}
\item To avoid set-theoretical difficulties, we (implicitly) work with Grothendieck universes~\cite[exp.~I, sec.~0]{artin_grothendieck_verdier:1972:sga_4_1}. In particular, every category has a \emph{set} of objects and a \emph{set} of morphisms.
\item The composite of morphisms \(f\colon X \map Y\) and \(g\colon Y \map Z\) is usually denoted by \(f g\colon X \map Z\). The composite of functors \(F\colon \mathcal{C} \map \mathcal{D}\) and \(G\colon \mathcal{D} \map \mathcal{E}\) is usually denoted by \(G \comp F\colon \mathcal{C} \map \mathcal{E}\).
\item Given objects \(X\) and \(Y\) in a category \(\mathcal{C}\), we denote the set of morphisms from \(X\) to \(Y\) by \({_{\mathcal{C}}}(X, Y)\).
\item Given a functor \(F\colon \mathcal{C} \map \mathcal{D}\), we denote its map on the objects by \(\Ob F\colon \Ob \mathcal{C} \map \Ob \mathcal{D}\), its map on the morphisms by \(\Mor F\colon \Mor \mathcal{C} \map \Mor \mathcal{D}\), and its maps on the hom-sets by \(F_{X, X'}\colon {_{\mathcal{C}}}(X, X') \map {_{\mathcal{D}}}(F X, F X')\) for \(X, X' \in \Ob \mathcal{C}\) .
\item If \(X\) is isomorphic to \(Y\), we write \(X \isomorphic Y\).
\item Given a set \(X\), we denote the identity map of \(X\) by \(\id_X\colon X \map X\). Likewise, given a category \(\mathcal{C}\), we denote the identity functor of \(\mathcal{C}\) by \(\id_{\mathcal{C}}\colon \mathcal{C} \map \mathcal{C}\).
\item We suppose given categories \(\mathcal{C}\) and \(\mathcal{D}\). A functor \(F\colon \mathcal{C} \map \mathcal{D}\) is said to be an equivalence (of categories) if there exists a functor \(G\colon \mathcal{D} \map \mathcal{C}\) such that \(G \comp F \isomorphic \id_{\mathcal{C}}\) and \(F \comp G \isomorphic \id_{\mathcal{D}}\). Such a functor \(G\) is then called an isomorphism inverse of \(F\). The categories \(\mathcal{C}\) and \(\mathcal{D}\) are said to be equivalent, written \(\mathcal{C} \categoricallyequivalent \mathcal{D}\), if an equivalence of categories \(F\colon \mathcal{C} \map \mathcal{D}\) exists.
\item We use the notation \(\Naturals = \set{1, 2, 3, \dots}\).
\item Given \(a, b \in \Integers\) with \(a \leq b + 1\), we write \([a, b] := \set[z \in \Integers]{a \leq z \leq b}\) for the set of integers lying between~\(a\) and \(b\).
\item When defining a category via its hom-sets, these are considered to be formally disjoint. In other words, a morphism between two given objects may be formally seen as a triple consisting of an underlying morphism and its source and target object.
\end{itemize}

\paragraph*{A comment on the terminology} \label{sec:introduction:a_comment_on_the_terminology}

The notions S-replacements, S-density, S-fullness, \dots{} are adapted to the notion of an S-\(2\)-arrow, whereas the terminology of an S-\(2\)-arrow is inspired from~\cite[def.~4.2]{thomas:2011:on_the_3-arrow_calculus_for_homotopy_categories}: an S-\(2\)-arrow may be interpreted as a \(3\)-arrow whose {``}T-part{''} is trivial. The dual concepts may be named \newnotion{T-replacements}, \newnotion{T-density}, \newnotion{T-fullness}, \dots, respectively.

\section{Preliminaries} \label{sec:preliminaries}

In this section, we collect some preliminaries, particularly on localisations, connectedness and contractibility of categories. Its main purpose is to fix notation and terminology.

\subsection*{Categories with denominators} \label{sec:preliminaries:categories_with_denominators}

A \newnotion{category with denominators} (\footnote{\eigenname{Kahn} and \eigenname{Maltsiniotis} use the terminology \newnotion{localisateur} (\newnotion{localisator})~\cite[sec.~3.1]{kahn_maltsiniotis:2008:structures_de_derivabilite}.}) consists of a category \(\mathcal{C}\) together with a subset \(D\) of \(\Mor \mathcal{C}\). By abuse of notation, we refer to the said category with denominators as well as to its underlying category by \(\mathcal{C}\). The elements of \(D\) are called \newnotion{denominators} (\footnote{\eigenname{Kahn} and \eigenname{Maltsiniotis} use the terminology \newnotion{{\'{e}}quivalences faibles} (\newnotion{weak equivalences})~\cite[sec.~3.1]{kahn_maltsiniotis:2008:structures_de_derivabilite}.}) in \(\mathcal{C}\).

Given a category with denominators \(\mathcal{C}\) with set of denominators \(D\), we write \(\Denominators \mathcal{C} := D\). In diagrams, a denominator \(d\colon X \map Y\) in \(\mathcal{C}\) will usually be depicted as
\[\begin{tikzpicture}[baseline=(m-1-1.base)]
  \matrix (m) [diagram]{
    X & Y \\ };
  \path[->, font=\scriptsize]
    (m-1-1) edge[den] node[above] {\(d\)} (m-1-2);
\end{tikzpicture}.\]

Given categories with denominators \(\mathcal{C}\) and \(\mathcal{D}\), a \newnotion{morphism of categories with denominators} from \(\mathcal{C}\) to \(\mathcal{D}\) is a functor \(F\colon \mathcal{C} \map \mathcal{D}\) that \newnotion{preserves denominators}, that is, such that \(F d\) is a denominator in \(\mathcal{D}\) for every denominator \(d\) in \(\mathcal{C}\).

Given morphisms of categories with denominators \(F, G\colon \mathcal{C} \map \mathcal{D}\), a \newnotion{\(2\)-morphism of categories with denominators} from \(F\) to \(G\) is a transformation \(\alpha\colon F \map G\).

Given categories with denominators \(\mathcal{C}\) and \(\mathcal{D}\), a functor \(F\colon \mathcal{C} \map \mathcal{D}\) is said to \newnotion{reflect denominators} if every morphism \(d\) in \(\mathcal{C}\) such that \(F d\) is a denominator in \(\mathcal{D}\) is a denominator in \(\mathcal{C}\).

\subsection*{Multiplicativity and isosaturatedness} \label{sec:preliminaries:multiplicativity_and_isosaturatedness}

A category with denominators \(\mathcal{C}\) is said to be \newnotion{multiplicative} if the following holds.
\begin{itemize}
\item[(Cat)] \emph{Multiplicativity}. For all denominators \(d\colon X \map \tilde X\) and \(e\colon \tilde X \map \bar X\) in \(\mathcal{C}\), the composite \(d e\colon X \map \bar X\) is also a denominator in \(\mathcal{C}\). For every object \(X\) in \(\mathcal{C}\), the identity \(1_X\colon X \map X\) is a denominator in \(\mathcal{C}\).
\end{itemize}

A category with denominators \(\mathcal{C}\) is said to be \newnotion{isosaturated} if the following holds.
\begin{itemize}
\item[(Iso)] \emph{Isosaturatedness}. Every isomorphism in \(\mathcal{C}\) is a denominator in \(\mathcal{C}\).
\end{itemize}

While multiplicativity of categories with denominators occurs quite often troughout this article, the notion of isosaturatedness is solely used in proposition~\ref{prop:characterisation_of_s-replacement_axiom}.

\subsection*{Localisations} \label{sec:preliminaries:localisations}

We suppose given a category with denominators \(\mathcal{C}\). A \newnotion{localisation} of \(\mathcal{C}\) consists of a category \(\mathcal{L}\) and a functor~\(L\colon \mathcal{C} \map \mathcal{L}\) with \(L d\) invertible in \(\mathcal{L}\) for every denominator \(d\) in \(\mathcal{C}\), and such that for every category~\(\mathcal{D}\) and every functor~\(F\colon \mathcal{C} \map \mathcal{D}\) with \(F d\) invertible in \(\mathcal{D}\) for every denominator \(d\) in \(\mathcal{C}\), there exists a unique functor~\(\hat{F}\colon \mathcal{L} \map \mathcal{D}\) with \(F = \hat{F} \comp L\).
\[\begin{tikzpicture}[baseline=(m-2-1.base)]
  \matrix (m) [diagram]{
    \mathcal{C} & \mathcal{D} \\
    \mathcal{L} & \\ };
  \path[->, font=\scriptsize]
    (m-1-1) edge node[above] {\(F\)} (m-1-2)
            edge node[left] {\(L\)} (m-2-1)
    (m-2-1) edge[exists] node[right] {\(\hat{F}\)} (m-1-2);
\end{tikzpicture}\]
By abuse of notation, we refer to the said localisation as well as to its underlying category by \(\mathcal{L}\). The functor~\(L\) is called the \newnotion{localisation functor} of \(\mathcal{L}\).

Given a localisation \(\mathcal{L}\) of \(\mathcal{C}\) with localisation functor \(L\colon \mathcal{C} \map \mathcal{L}\), we write \(\LocalisationFunctor = \LocalisationFunctor[\mathcal{L}] := L\).

A localisation \(\mathcal{L}\) also fulfils the following universal property with respect to transformations, see e.g.~\cite[prop.~(1.15)]{thomas:2012:a_calculus_of_fractions_for_the_homotopy_category_of_a_brown_cofibration_category}: For every category \(\mathcal{D}\), all functors \(G, G'\colon \mathcal{L} \map \mathcal{D}\) and every transformation \(\alpha\colon G \comp \LocalisationFunctor \map G' \comp \LocalisationFunctor\) there exists a unique transformation \(\hat \alpha\colon G \map G'\) with \(\alpha = \hat \alpha * \LocalisationFunctor\).
\[\begin{tikzpicture}[baseline=(m-2-1.base)]
  \matrix (m) [diagram=5.0em]{
    \mathcal{C} & \mathcal{D} \\
    \mathcal{L} & \\ };
  \path[->, font=\scriptsize]
    (m-1-1) edge[out=20, in=160] node[above] {\(G \comp \LocalisationFunctor\)} node (F) {} (m-1-2)
            edge[out=-20, in=-160] node[below=-2pt, near start] {\(G' \comp \LocalisationFunctor\)} node (G) {} (m-1-2)
            edge node[left] {\(\LocalisationFunctor\)} (m-2-1)
    (m-2-1) edge[out=60, in=-150] node[left] {\(G\)} node[sloped] (Fhat) {} (m-1-2)
            edge[out=30, in=-120] node[right] {\(G'\)} node[sloped] (Ghat) {} (m-1-2);
  \path[->, font=\scriptsize]
    (Fhat) edge[exists] node[above right] {\(\hat{\alpha}\)} (Ghat)
    (F)    edge node[right] {\(\alpha\)} (G);
\end{tikzpicture}\]

\subsection*{The Gabriel/Zisman localisation} \label{sec:preliminaries:the_gabriel-zisman_localisation}

We suppose given a category with denominators \(\mathcal{C}\). In~\cite[sec.~1.1]{gabriel_zisman:1967:calculus_of_fractions_and_homotopy_theory}, \eigenname{Gabriel} and \eigenname{Zisman} constructed a localisation of \(\mathcal{C}\). We call this particular localisation the \newnotion{Gabriel/Zisman localisation} of \(\mathcal{C}\) and denote it by~\(\GabrielZismanLocalisation(\mathcal{C})\). As the notion of a localisation is defined by a universal property, a localisation of \(\mathcal{C}\) is unique up to isomorphism.

We will use the following two facts about the Gabriel/Zisman localisation of a category with denominators \(\mathcal{C}\): First, the localisation functor \(\LocalisationFunctor\colon \mathcal{C} \map \GabrielZismanLocalisation(\mathcal{C})\) is given on the objects by
\[\Ob \LocalisationFunctor = \id_{\Ob \mathcal{C}} = \id_{\Ob \GabrielZismanLocalisation(\mathcal{C})}.\]
Second, for every morphism \(\varphi\colon X \map X'\) in \(\GabrielZismanLocalisation(\mathcal{C})\) there exist \(n \in \Naturals\), morphisms \(f_i\colon X_{i - 1} \map \tilde X_i\) in~\(\mathcal{C}\) for \(i \in [1, n]\) and denominators \(a_i\colon X_i \map \tilde X_i\) in~\(\mathcal{C}\) for \(i \in [1, n - 1]\) with \(X = X_0\), \(X' = \tilde X_n\) and such that
\[\varphi = \LocalisationFunctor(f_1) \, \LocalisationFunctor(a_1)^{- 1} \, \LocalisationFunctor(f_2) \, \dots \, \LocalisationFunctor(a_{n - 1})^{- 1} \, \LocalisationFunctor(f_n)\]
in \(\GabrielZismanLocalisation(\mathcal{C})\).
\[\begin{tikzpicture}[baseline=(m-1-1.base)]
  \matrix (m) [diagram]{
    X & \tilde X_1 & X_1 & \dots & X_{n - 1} & X' \\ };
  \path[->, font=\scriptsize]
    (m-1-1) edge[exists] node[above] {\(f_1\)} (m-1-2)
    (m-1-3) edge[exists] node[above] {\(f_2\)} (m-1-4)
            edge[exists, den] node[above] {\(a_1\)} (m-1-2)
    (m-1-5) edge[exists] node[above] {\(f_n\)} (m-1-6)
            edge[exists, den] node[above] {\(a_{n - 1}\)} (m-1-4);
\end{tikzpicture}\]

The Gabriel/Zisman localisation turns into a \(2\)-functor from the \(2\)-category of categories with denominators in a Grothendieck universe to the \(2\)-category of categories in this Grothendieck universe as follows. Given a morphism of categories with denominators \(F\colon \mathcal{C} \map \mathcal{D}\), then \(\GabrielZismanLocalisation(F)\colon \GabrielZismanLocalisation(\mathcal{C}) \map \GabrielZismanLocalisation(\mathcal{D})\) is the unique functor with \(\LocalisationFunctor[\GabrielZismanLocalisation(\mathcal{D})] \comp F = \GabrielZismanLocalisation(F) \comp \LocalisationFunctor[\GabrielZismanLocalisation(\mathcal{C})]\). Given morphisms of categories with denominators \(F, F'\colon \mathcal{C} \map \mathcal{D}\) and a \(2\)-morphism of categories with denominators \(\alpha\colon F \map F'\), the transformation \(\GabrielZismanLocalisation(\alpha)\colon \GabrielZismanLocalisation(F) \map \GabrielZismanLocalisation(F')\) is the unique transformation with \(\LocalisationFunctor[\GabrielZismanLocalisation(\mathcal{D})] * \alpha = \GabrielZismanLocalisation(\alpha) * \LocalisationFunctor[\GabrielZismanLocalisation(\mathcal{C})]\).
\[\begin{tikzpicture}[baseline=(m-2-1.base)]
  \matrix (m) [diagram=5.0em]{
    \mathcal{C} & \mathcal{D} \\
    \GabrielZismanLocalisation(\mathcal{C}) & \GabrielZismanLocalisation(\mathcal{D}) \\ };
  \path[->, font=\scriptsize]
    (m-1-1) edge[out=20, in=160] node[above] {\(F\)} node (F) {} (m-1-2)
            edge[out=-20, in=-160] node[below] {\(F'\)} node (G) {} (m-1-2)
            edge node[left] {\(\LocalisationFunctor\)} (m-2-1)
    (m-1-2) edge node[right] {\(\LocalisationFunctor\)} (m-2-2)
    (m-2-1) edge[out=20, in=160] node[above] {\(\GabrielZismanLocalisation(F)\)} node[sloped] (Fhat) {} (m-2-2)
            edge[out=-20, in=-160] node[below] {\(\GabrielZismanLocalisation(F')\)} node[sloped] (Ghat) {} (m-2-2);
  \path[->, font=\scriptsize]
    (Fhat) edge node[right] {\(\GabrielZismanLocalisation(\alpha)\)} (Ghat)
    (F)    edge node[right] {\(\alpha\)} (G);
\end{tikzpicture}\]

In this article, we study conditions on a morphism of categories with denominators \(F\colon \mathcal{C} \map \mathcal{D}\) implying that~\(\GabrielZismanLocalisation(F)\colon \GabrielZismanLocalisation(\mathcal{C}) \map \GabrielZismanLocalisation(\mathcal{D})\) is an equivalence of categories.

\subsection*{S-\(2\)-arrows} \label{sec:preliminaries:s-2-arrows}

We suppose given a category with denominators \(\mathcal{C}\). An \newnotion{S-\(2\)-arrow} in \(\mathcal{C}\) is a diagram
\[\begin{tikzpicture}[baseline=(m-1-1.base)]
  \matrix (m) [diagram]{
    X & \tilde Y & Y \\ };
  \path[->, font=\scriptsize]
    (m-1-1) edge node[above] {\(f\)} (m-1-2)
    (m-1-3) edge[den] node[above] {\(a\)} (m-1-2);
\end{tikzpicture}\]
in \(\mathcal{C}\) where \(a\) is supposed to be a denominator, denoted by \((f, a)\colon \Stwoarrow{X}{\tilde Y}{Y}\).

S-\(2\)-arrows are usually used in the description of locations of well-behaved categories with denominators, see e.g.~\cite[ch.~I, sec.~2.2, sec.~2.3]{gabriel_zisman:1967:calculus_of_fractions_and_homotopy_theory}, \cite[th.~1]{brown:1974:abstract_homotopy_theory_and_generalized_sheaf_cohomology}, \cite[sec.~III.2, lem.~8]{gelfand_manin:2003:methods_of_homological_algebra}, \cite[th.~(2.35), th.~(2.37), th.~(3.128), rem.~(3.129)]{thomas:2012:a_calculus_of_fractions_for_the_homotopy_category_of_a_brown_cofibration_category}, where every morphism in the localisation is represented by an S-\(2\)-arrow. We will not do so in this article; instead, we will use the notion of an S-\(2\)-arrow in the formulation of the characterising conditions for S-equivalences in section~\ref{sec:s-equivalences_and_the_characterising_conditions}.

\subsection*{On the construction of isomorphism inverses on localisation level} \label{sec:preliminaries:on_the_construction_of_isomorphism_inverses_on_localisation_level}

We suppose given a morphism of categories with denominators \(F\colon \mathcal{C} \map \mathcal{D}\). In corollary~\ref{cor:criterion_for_inducing_an_equivalence_on_localisations}, we characterise those functors \(G\colon \mathcal{D} \map \GabrielZismanLocalisation(\mathcal{C})\) that induce an isomorphism inverse to \(\GabrielZismanLocalisation(F)\colon \GabrielZismanLocalisation(\mathcal{C}) \map \GabrielZismanLocalisation(\mathcal{D})\). This criterion is most likely folklore.

Remark~\ref{rem:criterion_for_induced_co_retraction_up_to_isomorphism_on_localisations}\ref{rem:criterion_for_induced_co_retraction_up_to_isomorphism_on_localisations:retraction}\ref{rem:criterion_for_induced_co_retraction_up_to_isomorphism_on_localisations:retraction:sufficient} will be used in the proof of corollary~\ref{cor:canonical_lift_along_forgetful_functor_of_s-replacement_category_induces_equivalence_on_localisations}\ref{cor:canonical_lift_along_forgetful_functor_of_s-replacement_category_induces_equivalence_on_localisations:retraction_up_to_isomorphism_and_forgetful_functor}, \ref{cor:canonical_lift_along_forgetful_functor_of_s-replacement_category_induces_equivalence_on_localisations:retraction_up_to_isomorphism}.

\begin{remark} \label{rem:criterion_for_induced_co_retraction_up_to_isomorphism_on_localisations}
We suppose given a morphism of categories with denominators \(F\colon \mathcal{C} \map \mathcal{D}\) and a func- \linebreak 
tor~\(G'\colon \GabrielZismanLocalisation(\mathcal{D}) \map \GabrielZismanLocalisation(\mathcal{C})\).
\begin{enumerate}
\item \label{rem:criterion_for_induced_co_retraction_up_to_isomorphism_on_localisations:coretraction}
\begin{enumerate}
\item \label{rem:criterion_for_induced_co_retraction_up_to_isomorphism_on_localisations:coretraction:necessary} Given an isotransformation \(\alpha'\colon G' \comp \GabrielZismanLocalisation(F) \map \id_{\GabrielZismanLocalisation(\mathcal{C})}\), then \(\alpha' * \LocalisationFunctor[\GabrielZismanLocalisation(\mathcal{C})]\) is an isotransformation from~\((G' \comp \LocalisationFunctor[\GabrielZismanLocalisation(\mathcal{D})]) \comp F\) to \(\LocalisationFunctor[\GabrielZismanLocalisation(\mathcal{C})]\).
\item \label{rem:criterion_for_induced_co_retraction_up_to_isomorphism_on_localisations:coretraction:sufficient} Given an isotransformation \(\alpha\colon (G' \comp \LocalisationFunctor[\GabrielZismanLocalisation(\mathcal{D})]) \comp F \map \LocalisationFunctor[\GabrielZismanLocalisation(\mathcal{C})]\), then there exists a unique transformation~\(\hat \alpha\colon G' \comp \GabrielZismanLocalisation(F) \map \id_{\GabrielZismanLocalisation(\mathcal{C})}\) with \(\alpha = \hat \alpha * \LocalisationFunctor[\GabrielZismanLocalisation(\mathcal{C})]\), and this transformation~\(\hat \alpha\) is an isotransformation.
\end{enumerate}
\item \label{rem:criterion_for_induced_co_retraction_up_to_isomorphism_on_localisations:retraction}
\begin{enumerate}
\item \label{rem:criterion_for_induced_co_retraction_up_to_isomorphism_on_localisations:retraction:necessary} Given an isotransformation \(\alpha'\colon \GabrielZismanLocalisation(F) \comp G' \map \id_{\GabrielZismanLocalisation(\mathcal{D})}\), then \(\alpha' * \LocalisationFunctor[\GabrielZismanLocalisation(\mathcal{D})]\) is an isotransformation from~\(\GabrielZismanLocalisation(F) \comp (G' \comp \LocalisationFunctor[\GabrielZismanLocalisation(\mathcal{D})])\) to \(\LocalisationFunctor[\GabrielZismanLocalisation(\mathcal{D})]\).
\item \label{rem:criterion_for_induced_co_retraction_up_to_isomorphism_on_localisations:retraction:sufficient} Given an isotransformation \(\alpha\colon \GabrielZismanLocalisation(F) \comp (G' \comp \LocalisationFunctor[\GabrielZismanLocalisation(\mathcal{D})]) \map \LocalisationFunctor[\GabrielZismanLocalisation(\mathcal{D})]\), then there exists a unique transformation~\(\hat \alpha\colon \GabrielZismanLocalisation(F) \comp G' \map \id_{\GabrielZismanLocalisation(\mathcal{D})}\) with \(\alpha = \hat \alpha * \LocalisationFunctor[\GabrielZismanLocalisation(\mathcal{D})]\), and this transformation~\(\hat \alpha\) is an isotransformation.
\end{enumerate}
\end{enumerate}
\end{remark}
\begin{proof} \
\begin{enumerate}
\item
\begin{enumerate}
\item As \(\alpha'\colon G' \comp \GabrielZismanLocalisation(F) \map \id_{\GabrielZismanLocalisation(\mathcal{C})}\) is an isotransformation, the transformation \(\alpha' * \LocalisationFunctor[\GabrielZismanLocalisation(\mathcal{C})]\) is an isotransformation from \(G' \comp \GabrielZismanLocalisation(F) \comp \LocalisationFunctor[\GabrielZismanLocalisation(\mathcal{C})] = G' \comp \LocalisationFunctor[\GabrielZismanLocalisation(\mathcal{D})] \comp F\) to \(\id_{\GabrielZismanLocalisation(\mathcal{C})} \comp \LocalisationFunctor[\GabrielZismanLocalisation(\mathcal{C})] = \LocalisationFunctor[\GabrielZismanLocalisation(\mathcal{C})]\).
\item As \(G' \comp \LocalisationFunctor[\GabrielZismanLocalisation(\mathcal{D})] \comp F = G' \comp \GabrielZismanLocalisation(F) \comp \LocalisationFunctor[\GabrielZismanLocalisation(\mathcal{C})]\), there exists a unique transformation \(\hat \alpha\colon G' \comp \GabrielZismanLocalisation(F) \map \id_{\GabrielZismanLocalisation(\mathcal{C})}\) with \(\alpha = \hat \alpha * \LocalisationFunctor[\GabrielZismanLocalisation(\mathcal{C})]\), see e.g.~\cite[prop.~(1.16)]{thomas:2012:a_calculus_of_fractions_for_the_homotopy_category_of_a_brown_cofibration_category}. Moreover, \(\hat \alpha\) is an isotransformation, see e.g.~\cite[cor.~(1.18)]{thomas:2012:a_calculus_of_fractions_for_the_homotopy_category_of_a_brown_cofibration_category}.
\end{enumerate}
\item
\begin{enumerate}
\setcounter{enumii}{1}
\item This follows e.g.\ from~\cite[prop.~(1.16), cor.~(1.18)]{thomas:2012:a_calculus_of_fractions_for_the_homotopy_category_of_a_brown_cofibration_category}. \qedhere
\end{enumerate}
\end{enumerate}
\end{proof}

\begin{corollary} \label{cor:criterion_for_inducing_an_equivalence_on_localisations}
We suppose given a morphism of categories with denominators \(F\colon \mathcal{C} \map \mathcal{D}\) and a func- \linebreak 
tor \(G\colon \mathcal{D} \map \GabrielZismanLocalisation(\mathcal{C})\) that maps denominators in \(\mathcal{D}\) to isomorphisms in \(\GabrielZismanLocalisation(\mathcal{C})\) and we let \(\hat G\colon \GabrielZismanLocalisation(\mathcal{D}) \map \GabrielZismanLocalisation(\mathcal{C})\) be the unique functor with \(G = \hat G \comp \LocalisationFunctor[\GabrielZismanLocalisation(\mathcal{D})]\). Moreover, we suppose given an isotransformation \(\alpha\colon G \comp F \map \LocalisationFunctor[\GabrielZismanLocalisation(\mathcal{C})]\) and an isotransformation~\(\beta\colon \GabrielZismanLocalisation(F) \comp G \map \LocalisationFunctor[\GabrielZismanLocalisation(\mathcal{D})]\) and we let \(\hat \alpha\colon \hat G \comp \GabrielZismanLocalisation(F) \map \id_{\GabrielZismanLocalisation(\mathcal{C})}\) be the unique transformation with \(\alpha = \hat \alpha * \LocalisationFunctor[\GabrielZismanLocalisation(\mathcal{C})]\) and we let~\(\hat \beta\colon \GabrielZismanLocalisation(F) \comp \hat G \map \id_{\GabrielZismanLocalisation(\mathcal{D})}\) be the unique transformation with~\(\beta = \hat \beta * \LocalisationFunctor[\GabrielZismanLocalisation(\mathcal{D})]\). Then \(\hat \alpha\) and \(\hat \beta\) are isotransformations. In particular, \(\GabrielZismanLocalisation(F)\) and \(\hat G\) are mutually isomorphism inverse equivalences of categories.
\end{corollary}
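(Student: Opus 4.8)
The plan is to reduce the whole statement to Remark~\ref{rem:criterion_for_induced_co_retraction_up_to_isomorphism_on_localisations}, applied with $G' := \hat G$. The only preparatory observation needed is that $\hat G$ is a functor $\GabrielZismanLocalisation(\mathcal{D}) \map \GabrielZismanLocalisation(\mathcal{C})$ — precisely the kind of input the Remark takes — and that, by the defining property of $\hat G$, we have $\hat G \comp \LocalisationFunctor[\GabrielZismanLocalisation(\mathcal{D})] = G$. Consequently the given isotransformation $\alpha\colon G \comp F \map \LocalisationFunctor[\GabrielZismanLocalisation(\mathcal{C})]$ is literally an isotransformation $(\hat G \comp \LocalisationFunctor[\GabrielZismanLocalisation(\mathcal{D})]) \comp F \map \LocalisationFunctor[\GabrielZismanLocalisation(\mathcal{C})]$, and the given isotransformation $\beta\colon \GabrielZismanLocalisation(F) \comp G \map \LocalisationFunctor[\GabrielZismanLocalisation(\mathcal{D})]$ is literally an isotransformation $\GabrielZismanLocalisation(F) \comp (\hat G \comp \LocalisationFunctor[\GabrielZismanLocalisation(\mathcal{D})]) \map \LocalisationFunctor[\GabrielZismanLocalisation(\mathcal{D})]$.

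Next I would apply Remark~\ref{rem:criterion_for_induced_co_retraction_up_to_isomorphism_on_localisations}\ref{rem:criterion_for_induced_co_retraction_up_to_isomorphism_on_localisations:coretraction}\ref{rem:criterion_for_induced_co_retraction_up_to_isomorphism_on_localisations:coretraction:sufficient} to $\alpha$: it yields a unique transformation $\hat G \comp \GabrielZismanLocalisation(F) \map \id_{\GabrielZismanLocalisation(\mathcal{C})}$ whose whiskering with $\LocalisationFunctor[\GabrielZismanLocalisation(\mathcal{C})]$ equals $\alpha$, and it asserts that this transformation is an isotransformation. Since $\hat \alpha$ is defined in the statement as exactly this unique transformation, $\hat \alpha$ is an isotransformation. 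Dually, applying Remark~\ref{rem:criterion_for_induced_co_retraction_up_to_isomorphism_on_localisations}\ref{rem:criterion_for_induced_co_retraction_up_to_isomorphism_on_localisations:retraction}\ref{rem:criterion_for_induced_co_retraction_up_to_isomorphism_on_localisations:retraction:sufficient} to $\beta$ shows, by the same reasoning, that $\hat \beta$ is an isotransformation.

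Finally, I would spell out the ``in particular'': the existence of the isotransformations $\hat \alpha\colon \hat G \comp \GabrielZismanLocalisation(F) \map \id_{\GabrielZismanLocalisation(\mathcal{C})}$ and $\hat \beta\colon \GabrielZismanLocalisation(F) \comp \hat G \map \id_{\GabrielZismanLocalisation(\mathcal{D})}$ gives $\hat G \comp \GabrielZismanLocalisation(F) \isomorphic \id_{\GabrielZismanLocalisation(\mathcal{C})}$ and $\GabrielZismanLocalisation(F) \comp \hat G \isomorphic \id_{\GabrielZismanLocalisation(\mathcal{D})}$, which is precisely the definition of $\GabrielZismanLocalisation(F)$ and $\hat G$ being mutually isomorphism inverse equivalences of categories.

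I do not expect any genuine obstacle: all the content sits in the Remark, and the corollary is a matter of matching the hypothesis $\hat G \comp \LocalisationFunctor[\GabrielZismanLocalisation(\mathcal{D})] = G$ and then unwinding the definition of an equivalence. The one point deserving a line of care is to confirm that the transformation produced by the Remark coincides with the one named $\hat \alpha$ (resp.\ $\hat \beta$) in the statement; this is immediate from the uniqueness clause shared by both.
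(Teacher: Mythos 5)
Your proposal is correct and follows exactly the paper's route: the paper likewise proves the corollary by applying Remark~\ref{rem:criterion_for_induced_co_retraction_up_to_isomorphism_on_localisations}\ref{rem:criterion_for_induced_co_retraction_up_to_isomorphism_on_localisations:coretraction}\ref{rem:criterion_for_induced_co_retraction_up_to_isomorphism_on_localisations:coretraction:sufficient} to \(\hat\alpha\) and Remark~\ref{rem:criterion_for_induced_co_retraction_up_to_isomorphism_on_localisations}\ref{rem:criterion_for_induced_co_retraction_up_to_isomorphism_on_localisations:retraction}\ref{rem:criterion_for_induced_co_retraction_up_to_isomorphism_on_localisations:retraction:sufficient} to \(\hat\beta\), with \(G' = \hat G\) and the identification \(G = \hat G \comp \LocalisationFunctor[\GabrielZismanLocalisation(\mathcal{D})]\). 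Your additional remarks on matching the uniqueness clause and unwinding the definition of an equivalence merely make explicit what the paper leaves implicit.
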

\begin{proof}
The transformation~\(\hat \alpha\) is an isotransformation by remark~\ref{rem:criterion_for_induced_co_retraction_up_to_isomorphism_on_localisations}\ref{rem:criterion_for_induced_co_retraction_up_to_isomorphism_on_localisations:coretraction}\ref{rem:criterion_for_induced_co_retraction_up_to_isomorphism_on_localisations:coretraction:sufficient} and the transformation \(\hat \beta\) is an isotransformation by remark~\ref{rem:criterion_for_induced_co_retraction_up_to_isomorphism_on_localisations}\ref{rem:criterion_for_induced_co_retraction_up_to_isomorphism_on_localisations:retraction}\ref{rem:criterion_for_induced_co_retraction_up_to_isomorphism_on_localisations:retraction:sufficient}.
\end{proof}

\subsection*{A construction principle for functors via choices} \label{sec:preliminaries:a_construction_principle_for_functors_via_choices}

We recall from~\cite[app.~A, sec.~1]{thomas:2012:a_calculus_of_fractions_for_the_homotopy_category_of_a_brown_cofibration_category} a systematic method to construct a functor whose map on the objects depends on a choice.

We suppose given a category \(\mathcal{C}\) and a family \(\mathfrak{S} = (\mathfrak{S}_{X})_{X \in \Ob \mathcal{C}}\) over~\(\Ob \mathcal{C}\). The \newnotion{structure category} of~\(\mathcal{C}\) with respect to \(\mathfrak{S}\) is the category \(\StructureCategory{\mathcal{C}}{\mathfrak{S}}\) given as follows. The set of objects of \(\mathcal{C}_{\mathfrak{S}}\) is given by \(\Ob{\mathcal{C}_{\mathfrak{S}}} =\) \linebreak 
\(\set[(X, S)]{\text{\(X \in \Ob \mathcal{C}\), \(S \in \mathfrak{S}_X\)}}\). Given objects \((X, S)\), \((Y, T)\) in \(\StructureCategory{\mathcal{C}}{\mathfrak{S}}\), we have the hom-set \({_{\StructureCategory{\mathcal{C}}{\mathfrak{S}}}}((X, S), (Y, T)) = \set[(f, S, T)]{f \in {_{\mathcal{C}}}(X, Y)}\). The composite of morphisms \((f, S, T)\colon (X, S) \map (Y, T)\) and \((g, T, U)\colon (Y, T) \map (Z, U)\) in~\(\StructureCategory{\mathcal{C}}{\mathfrak{S}}\) is given by~\((f, S, T) (g, T, U) = (f g, S, U)\), and the identity morphism on an object \((X, S)\) in \(\StructureCategory{\mathcal{C}}{\mathfrak{S}}\) is given by~\(1_{(X, S)} = (1_X, S, S)\).

The \newnotion{forgetful functor} of \(\StructureCategory{\mathcal{C}}{\mathfrak{S}}\) is the functor \(\ForgetfulFunctor\colon \StructureCategory{\mathcal{C}}{\mathfrak{S}} \map \mathcal{C}\), \((X, S) \mapsto X\), \((f, S, T) \mapsto f\).

Given objects \((X, S)\) and \((Y, T)\) in \(\StructureCategory{\mathcal{C}}{\mathfrak{S}}\), a morphism \((f, S, T)\colon (X, S) \map (Y, T)\) in \(\StructureCategory{\mathcal{C}}{\mathfrak{S}}\) will usually be denoted just by \(f\colon (X, S) \map (Y, T)\). Moreover, we usually write \({_{\StructureCategory{\mathcal{C}}{\mathfrak{S}}}}((X, S), (Y, T)) = {_{\mathcal{C}}}(X, Y)\) instead of \({_{\StructureCategory{\mathcal{C}}{\mathfrak{S}}}}((X, S), (Y, T)) = \set[(f, S, T)]{f \in {_{\mathcal{C}}}(X, Y)}\).

Given a functor \(\bar F\colon \StructureCategory{\mathcal{C}}{\mathfrak{S}} \map \mathcal{D}\), we usually write \(\bar F_S X := \bar F{(X, S)}\) for \((X, S) \in \Ob \StructureCategory{\mathcal{C}}{\mathfrak{S}}\) and \(\bar F_{S, T} f := F{(f, S, T)}\) for a morphism \(f\colon (X, S) \map (Y, T)\) in \(\StructureCategory{\mathcal{C}}{\mathfrak{S}}\).

A \newnotion{choice of structures} for \(\mathcal{C}\) with respect to \(\mathfrak{S}\) is a family \(S = (S_X)_{X \in \Ob \mathcal{C}}\) over \(\Ob \mathcal{C}\) such that \(S_X \in \mathfrak{S}_X\) for~\(X \in \Ob \mathcal{C}\). Given a choice of structures \(S = (S_X)_{X \in \Ob \mathcal{C}}\) for \(\mathcal{C}\) with respect to \(\mathfrak{S}\), the \newnotion{structure choice functor} with respect to \(S\) is the functor \(\StructureChoiceFunctor{S}\colon \mathcal{C} \map \StructureCategory{\mathcal{C}}{\mathfrak{S}}\) given on the objects by \(\StructureChoiceFunctor{S} X = (X, S_X)\) for \(X \in \Ob \mathcal{C}\) and on the morphisms by \(\StructureChoiceFunctor{S} f = f\colon (X, S_X) \map (Y, S_Y)\) for every morphism \(f\colon X \map Y\) in \(\mathcal{C}\). It fulfils \(\ForgetfulFunctor \comp \StructureChoiceFunctor{S} = \id_{\mathcal{C}}\) and \(\StructureChoiceFunctor{S} \comp \ForgetfulFunctor \isomorphic \id_{\StructureCategory{\mathcal{C}}{\mathfrak{S}}}\), where an isotransformation \(\varepsilon\colon \StructureChoiceFunctor{S} \comp \ForgetfulFunctor \map \id_{\StructureCategory{\mathcal{C}}{\mathfrak{S}}}\) is given by \(\varepsilon_{(X, T)} = 1_X\colon (X, S_X) \map (X, T)\) for~\((X, T) \in \Ob{\StructureCategory{\mathcal{C}}{\mathfrak{S}}}\). In particular, the forgetful functor \(\ForgetfulFunctor\colon \StructureCategory{\mathcal{C}}{\mathfrak{S}} \map \mathcal{C}\) and the structure choice functor \(\StructureChoiceFunctor{S}\colon \mathcal{C} \map \StructureCategory{\mathcal{C}}{\mathfrak{S}}\) are mutually isomorphism inverse equivalences of categories.

To construct a functor \(F\colon \mathcal{C} \map \mathcal{D}\) whose definition on the objects uses a choice of structures \(S\) for \(\mathcal{C}\) with respect to \(\mathfrak{S}\), we may first construct a choice-free variant \(\bar F\colon \mathcal{C}_{\mathfrak{S}} \map \mathcal{D}\) and then define \(F := \bar F \comp \StructureChoiceFunctor{S}\). With the notations introduced above, we then have \(F X = \bar F_{S_X} X\) for every object \(X\) in \(\mathcal{C}\) and \(F f = \bar F_{S_X, S_{X'}} f\) for every morphism~\(f\colon X \map X'\) in \(\mathcal{C}\).

Given a functor \(\bar F\colon \mathcal{C}_{\mathfrak{S}} \map \mathcal{D}\) and choices of structures \(S\) and \(\tilde S\) for \(\mathcal{C}\) with respect to \(\mathfrak{S}\), then \(F_S := \bar F \comp \StructureChoiceFunctor{\tilde S}\) and~\(F_{\tilde S} := \bar F \comp \StructureChoiceFunctor{\tilde S}\) are isomorphic, an isotransformation \(\alpha_{S, \tilde S}\colon F_S \map F_{\tilde S}\) is given by \((\alpha_{S, \tilde S})_X = \bar F_{S_X, \tilde S_X} 1_X\colon F_S X \map F_{\tilde S} X\) for \(X \in \Ob \mathcal{C}\), and the inverse of \(\alpha_{S, \tilde S}\) is given by \(\alpha_{S, \tilde S}^{- 1} = \alpha_{\tilde S, S}\).

We will make use of this principle in the construction of S-replacement functors in section~\ref{sec:s-replacement_functors_and_the_s-approximation_theorem}.

\section{S-replacements} \label{sec:s-replacements}

We suppose given a morphism of categories with denominators \(F\colon \mathcal{C} \map \mathcal{D}\). If \(\GabrielZismanLocalisation(F)\colon \GabrielZismanLocalisation(\mathcal{C}) \map \GabrielZismanLocalisation(\mathcal{D})\) is an equivalence of categories, then it is in particular dense, that is, for every object \(Y\) in \(\mathcal{D}\) there is an object~\(X\) in~\(\mathcal{C}\) such that \(Y \isomorphic \GabrielZismanLocalisation(F) X = F X\) in \(\GabrielZismanLocalisation(\mathcal{D})\). Since the localisation functor \(\LocalisationFunctor\colon \mathcal{D} \map \GabrielZismanLocalisation(\mathcal{D})\) maps denominators in \(\mathcal{D}\) to isomorphisms in \(\GabrielZismanLocalisation(\mathcal{D})\), the easiest non-trivial situation where we have such an isomorphism in \(\GabrielZismanLocalisation(\mathcal{D})\) is the one where we already have a denominator \(F X \map Y\) (or, dually, a denominator~\(Y \map F X\)) in \(\mathcal{D}\).

Below we will often suppose that \(F\) admits for every object \(Y\) in \(\mathcal{D}\) an object \(X\) and a denominator~\(q\colon F X \map Y\) in \(\mathcal{D}\). In fact, to show that \(F\) induces an equivalence on localisation level (under certain additional conditions), we will use such pairs \((X, q)\) to construct an isomorphism inverse.

In the following, we will introduce terminology for these pairs and introduce a categorical setup for objects endowed with these pairs.

Throughout this section, we suppose given a morphism of categories with denominators~\(F\colon \mathcal{C} \map \mathcal{D}\). (\footnote{Some parts of this section may also make sense if \(\mathcal{C}\) is (only) supposed to be a category, \(\mathcal{D}\) is supposed to be a category with denominators and \(F\) is (only) supposed to be a functor.})

\subsection*{Concept} \label{sec:s-replacements:concept}

We begin with the definition of the basic concept of this article.

\begin{definition}[S-replacement] \label{def:s-replacement}
We suppose given an object \(Y\) in \(\mathcal{D}\). An \newnotion{S-replacement} of \(Y\) along \(F\) (\footnote{\eigenname{Kahn} and \eigenname{Maltsiniotis} use the terminology \newnotion{\(F\)-r{\'e}solution {\`a} gauche} (\newnotion{left \(F\)-resolution})~\cite[sec.~5.11, dual of d{\'e}f.~5.4]{kahn_maltsiniotis:2008:structures_de_derivabilite}.}) (or, if no confusion arises, just an \newnotion{S{\nbd}replacement} of \(Y\)) is a pair \((X, q)\) consisting of an object \(X\) in \(\mathcal{C}\) and a denominator \(q\colon F X \map Y\) in \(\mathcal{D}\).
\end{definition}

\begin{remark} \label{rem:s-replacements_and_composition_of_functors}
In addition to the morphism of categories with denominators \(F\colon \mathcal{C} \map \mathcal{D}\), we suppose given a morphism of categories with denominators \(G\colon \mathcal{D} \map \mathcal{E}\).
\begin{enumerate}
\item \label{rem:s-replacements_and_composition_of_functors:second_of_composite} Given an object \(Z\) in \(\mathcal{E}\) and an S-replacement \((X, r)\) of \(Z\) along \(G \comp F\), then \((F X, r)\) is an S-replacement of \(Z\) along \(G\).
\[\begin{tikzpicture}[baseline=(m-2-1.base)]
  \matrix (m) [diagram]{
    G F X \\
    Z \\ };
  \path[->, font=\scriptsize]
    (m-1-1) edge[den] node[right] {\(r\)} (m-2-1);
\end{tikzpicture}\]
\item \label{rem:s-replacements_and_composition_of_functors:composite} We suppose that \(\mathcal{E}\) is multiplicative. Given an object~\(Z\) in \(\mathcal{E}\), an S-replacement \((Y, r)\) of \(Z\) along \(G\) and an S-replacement \((X, q)\) of \(Y\) along~\(F\), then~\((X, (G q) r)\) is an S-replacement of \(Z\) along \(G \comp F\).
\[\begin{tikzpicture}[baseline=(m-3-1.base)]
  \matrix (m) [diagram]{
    G F X \\
    G Y \\
    Z \\ };
  \path[->, font=\scriptsize]
    (m-1-1) edge[den] node[right] {\(G q\)} (m-2-1)
    (m-2-1) edge[den] node[right] {\(r\)} (m-3-1);
\end{tikzpicture}\]
\end{enumerate}
\end{remark}

\begin{definition}[having enough S-replacements] \label{def:having_enough_s-replacements}
The category with denominators \(\mathcal{D}\) is said to \newnotion{have enough S{\nbd}replacements} along \(F\) (or, if no confusion arises, just to \newnotion{have enough~S{\nbd}replacements}) if for every object~\(Y\) in~\(\mathcal{D}\) there exists an S-replacement of \(Y\) along \(F\).
\end{definition}

\begin{remark} \label{rem:having_enough_s-replacements_and_composition_of_functors}
In addition to the morphism of categories with denominators \(F\colon \mathcal{C} \map \mathcal{D}\), we suppose given a morphism of categories with denominators \(G\colon \mathcal{D} \map \mathcal{E}\).
\begin{enumerate}
\item \label{rem:having_enough_s-replacements_and_composition_of_functors:second_of_composite} If \(\mathcal{E}\) has enough S-replacements along \(G \comp F\), then it has enough S-replacements along \(G\).
\item \label{rem:having_enough_s-replacements_and_composition_of_functors:composite} We suppose that \(\mathcal{E}\) is multiplicative. If \(\mathcal{E}\) has enough S-replacements along \(G\) and \(\mathcal{D}\) has enough S{\nbd}replacements along \(F\), then \(\mathcal{E}\) has enough S-replacements along \(G \comp F\).
\end{enumerate}
\end{remark}
\begin{proof} \
\begin{enumerate}
\item We suppose that \(\mathcal{E}\) has enough S-replacements along \(G \comp F\). Then for every object \(Z\) in \(\mathcal{E}\), there exists an S-replacement \((X, r)\) of \(Z\) along \(G \comp F\), which yields the S-replacement \((F X, r)\) of \(Z\) along \(G\). Thus \(\mathcal{E}\) has enough S-replacements along \(G\).
\item We suppose that \(\mathcal{E}\) has enough S-replacements along \(G\) and that \(\mathcal{D}\) has enough S-replacements along~\(F\). Moreover, we suppose given an object \(Z\) in \(\mathcal{E}\). Since \(\mathcal{E}\) has enough S-replacements along \(G\), there exists an S-replacement \((Y, r)\) of \(Z\) along \(G\), and since \(\mathcal{D}\) has enough S-replacements along \(F\), there exists an~S{\nbd}replacement \((X, q)\) of \(Y\) along \(F\). But then~\((X, (G q) r)\) is an S-replacement of \(Z\) along \(G \comp F\). Thus~\(\mathcal{E}\) has enough S-replacements along \(G \comp F\). \qedhere
\end{enumerate}
\end{proof}

\begin{definition}[having all trivial S-replacements] \label{def:having_all_trivial_s-replacements}
The category with denominators \(\mathcal{D}\) is said to \newnotion{have all trivial S-replacements} along \(F\) (or, if no confusion arises, just to \newnotion{have all trivial S-replacements}) if for every object~\(X\) in \(\mathcal{C}\) the identity \(1_{F X}\colon F X \map F X\) is a denominator in~\(\mathcal{D}\).
\[\begin{tikzpicture}[baseline=(m-2-1.base)]
  \matrix (m) [diagram]{
    F X \\
    F X \\ };
  \path[->, font=\scriptsize]
    (m-1-1) edge[den] node[right] {\(1_{F X}\)} (m-2-1);
\end{tikzpicture}\]
\end{definition}

\begin{remark} \label{rem:multiplicativity_implies_having_all_trivial_s-replacements}
If \(\mathcal{C}\) or \(\mathcal{D}\) is multiplicative, then \(\mathcal{D}\) has all trivial S-replacements along \(F\).
\end{remark}
 
\subsection*{The category of objects with S-replacement} \label{sec:s-replacements:the_category_of_objects_with_s-replacement}

Next, we consider structures consisting of an object in \(\mathcal{D}\) equipped with an S-replacement.

\begin{definition}[object with S-replacement] \label{def:object_with_s-replacement}
We let \(\mathfrak{R} = (\mathfrak{R}_Y)_{Y \in \Ob \mathcal{D}}\) be given by
\[\mathfrak{R}_Y = \set[(X, q)]{\text{\((X, q)\) is an S-replacement of \(Y\) along \(F\)}}\]
for \(Y \in \Ob \mathcal{D}\). The \newnotion{category of objects with S-replacement} in \(\mathcal{D}\) along \(F\) is the category with denominators~\(\CategoryOfObjectsWithSReplacement{\mathcal{D}}{F}\) whose underlying category is given by the structure category \(\mathcal{D}_{\mathfrak{R}}\) and whose set of denominators is given by
\[\Denominators \CategoryOfObjectsWithSReplacement{\mathcal{D}}{F} = \set[e \in \Mor \CategoryOfObjectsWithSReplacement{\mathcal{D}}{F}]{\text{\(\ForgetfulFunctor e\) is a denominator in \(\mathcal{D}\)}}.\]
An object in \(\CategoryOfObjectsWithSReplacement{\mathcal{D}}{F}\) is called an \newnotion{object with S-replacement} in \(\mathcal{D}\) along \(F\). A morphism in \(\CategoryOfObjectsWithSReplacement{\mathcal{D}}{F}\) is called a \newnotion{morphism of objects with S-replacement} in \(\mathcal{D}\) along \(F\). A denominator in \(\CategoryOfObjectsWithSReplacement{\mathcal{D}}{F}\) is called a \newnotion{denominator of objects with S-replacement} in \(\mathcal{D}\) along \(F\).
\end{definition}

\begin{remark} \label{rem:description_of_category_of_objects_with_s-replacement}
We have
\[\Ob{\CategoryOfObjectsWithSReplacement{\mathcal{D}}{F}} = \set[(Y, X, q)]{\text{\(Y \in \Ob \mathcal{D}\), \((X, q)\) is an S-replacement of \(Y\) along \(F\)}}.\]
For objects \((Y, X, q)\) and \((Y', X', q')\) in \(\CategoryOfObjectsWithSReplacement{\mathcal{D}}{F}\), we have the hom-set
\[{_{\CategoryOfObjectsWithSReplacement{\mathcal{D}}{F}}}((Y, X, q), (Y', X', q')) = {_{\mathcal{D}}}(Y, Y').\]
For morphisms \(g\colon (Y, X, q) \map (Y', X', q')\) and \(g'\colon (Y', X', q') \map (Y'', X'', q'')\) in \(\CategoryOfObjectsWithSReplacement{\mathcal{D}}{F}\), the compo- \linebreak 
site \(g g'\colon (Y, X, q) \map (Y'', X'', q'')\) in \(\CategoryOfObjectsWithSReplacement{\mathcal{D}}{F}\) has the underlying morphism \(g g'\colon Y \map Y''\) in \(\mathcal{D}\). For an object~\((Y, X, q)\) in \(\CategoryOfObjectsWithSReplacement{\mathcal{D}}{F}\), the identity morphism \(1_{(Y, X, q)}\colon (Y, X, q) \map (Y, X, q)\) in~\(\CategoryOfObjectsWithSReplacement{\mathcal{D}}{F}\) has the underlying morphism~\(1_Y\colon Y \map Y\) in~\(\mathcal{D}\).

The forgetful functor \(\ForgetfulFunctor\colon \CategoryOfObjectsWithSReplacement{\mathcal{D}}{F} \map \mathcal{D}\) is given on the objects by
\[\ForgetfulFunctor_{(X, q)} Y = Y\]
for \((Y, X, q) \in \Ob \CategoryOfObjectsWithSReplacement{\mathcal{D}}{F}\), and on the morphisms by
\[\ForgetfulFunctor_{(X, q), (X', q')} g = g\]
for every morphism \(g\colon (Y, X, q) \map (Y', X', q')\) in \(\CategoryOfObjectsWithSReplacement{\mathcal{D}}{F}\).
\end{remark}

\begin{remark} \label{rem:forgetful_functor_of_category_of_objects_with_s-replacement_preserves_and_reflects_denominators}
The forgetful functor \(\ForgetfulFunctor\colon \CategoryOfObjectsWithSReplacement{\mathcal{D}}{F} \map \mathcal{D}\) preserves and reflects denominators.
\end{remark}

\subsection*{Choices of S-replacements} \label{sec:s-replacements:choices_of_s-replacements}

Our construction of an isomorphism inverse on the localisation level in section~\ref{sec:s-replacement_functors_and_the_s-approximation_theorem} will use a choice of an S{\nbd}replacement for \emph{every} object of \(\mathcal{D}\). This leads us to the following notion, whose properties are just particular cases of the more general facts on choices of structures, see section~\ref{sec:preliminaries} or~\cite[app.~A, sec.~1]{thomas:2012:a_calculus_of_fractions_for_the_homotopy_category_of_a_brown_cofibration_category}.

\begin{definition}[choice of S-replacements] \label{def:choice_of_s-replacements}
We let \(\mathfrak{R} = (\mathfrak{R}_Y)_{Y \in \Ob \mathcal{D}}\) be given by
\[\mathfrak{R}_Y = \set[(X, q)]{\text{\(X \in \Ob \mathcal{C}\), \(q\colon F X \map Y\) is a denominator in \(\mathcal{D}\)}}\]
for~\(Y \in \Ob \mathcal{D}\). A \newnotion{choice of S-replacements} for \(\mathcal{D}\) along \(F\) is a choice of structures with respect to \(\mathfrak{R}\).
\end{definition}

\begin{remark} \label{rem:description_of_choice_of_s-replacements}
A choice of S-replacements for \(\mathcal{D}\) along \(F\) is a family \(((X_Y, q_Y))_{Y \in \Ob \mathcal{D}}\) such that \((X_Y, q_Y)\) is an S-replacement of \(Y\) along \(F\) for~\(Y \in \Ob \mathcal{D}\).
\end{remark}

\begin{remark} \label{rem:existence_of_choice_of_s-replacements}
There exists a choice of S-replacements for \(\mathcal{D}\) along \(F\) if and only if \(\mathcal{D}\) has enough S{\nbd}replacements along \(F\).
\end{remark}

Every choice of structures leads to a structure choice functor, see section~\ref{sec:preliminaries} or~\cite[def.~(A.8)]{thomas:2012:a_calculus_of_fractions_for_the_homotopy_category_of_a_brown_cofibration_category}. In the case of a choice of S-replacements, the structure choice functor is given as follows.

\begin{remark} \label{rem:values_of_structure_choice_functor_for_s-replacements}
We suppose given a choice of S-replacements \(R = ((X_Y, q_Y))_{Y \in \Ob \mathcal{D}}\) for \(\mathcal{D}\) along~\(F\). The structure choice functor \(\StructureChoiceFunctor{R}\colon \mathcal{D} \map \CategoryOfObjectsWithSReplacement{\mathcal{D}}{F}\) is given on the objects by
\[\StructureChoiceFunctor{R} Y = (Y, X_Y, q_Y)\]
for \(Y \in \Ob \mathcal{D}\), and on the morphisms by
\[\StructureChoiceFunctor{R} g = g\colon (Y, X_Y, q_Y) \map (Y', X_{Y'}, q_{Y'})\]
for every morphism \(g\colon Y \map Y'\) in \(\mathcal{D}\).
\end{remark}

\begin{corollary} \label{cor:structure_choice_functor_for_s-replacements_is_morphism_of_categories_with_denominators}
We suppose given a choice of S-replacements \(R = ((X_Y, q_Y))_{Y \in \Ob \mathcal{D}}\) for \(\mathcal{D}\) along~\(F\). The structure choice functor \(\StructureChoiceFunctor{R}\colon \mathcal{D} \map \CategoryOfObjectsWithSReplacement{\mathcal{D}}{F}\) is a morphism of categories with denominators.
\end{corollary}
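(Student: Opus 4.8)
The plan is simply to unwind the definitions. From the general theory of structure choice functors recalled in Section~\ref{sec:preliminaries} we already know that $\StructureChoiceFunctor{R}$ is a functor $\mathcal{D} \map \StructureCategory{\mathcal{D}}{\mathfrak{R}} = \CategoryOfObjectsWithSReplacement{\mathcal{D}}{F}$ satisfying $\ForgetfulFunctor \comp \StructureChoiceFunctor{R} = \id_{\mathcal{D}}$. Hence the only thing left to verify is that $\StructureChoiceFunctor{R}$ preserves denominators.

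So I would let $d\colon Y \map Y'$ be a denominator in $\mathcal{D}$. By Remark~\ref{rem:values_of_structure_choice_functor_for_s-replacements}, $\StructureChoiceFunctor{R} d$ is the morphism $d\colon (Y, X_Y, q_Y) \map (Y', X_{Y'}, q_{Y'})$ in $\CategoryOfObjectsWithSReplacement{\mathcal{D}}{F}$, and its image under the forgetful functor $\ForgetfulFunctor\colon \CategoryOfObjectsWithSReplacement{\mathcal{D}}{F} \map \mathcal{D}$ is $\ForgetfulFunctor (\StructureChoiceFunctor{R} d) = d$ (as can also be read off directly from Remarks~\ref{rem:description_of_category_of_objects_with_s-replacement} and~\ref{rem:values_of_structure_choice_functor_for_s-replacements}). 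By the very definition of the denominators in $\CategoryOfObjectsWithSReplacement{\mathcal{D}}{F}$ given in Definition~\ref{def:object_with_s-replacement}, a morphism $e$ in $\CategoryOfObjectsWithSReplacement{\mathcal{D}}{F}$ is a denominator exactly when $\ForgetfulFunctor e$ is a denominator in $\mathcal{D}$. Since $\ForgetfulFunctor (\StructureChoiceFunctor{R} d) = d$ is a denominator in $\mathcal{D}$ by assumption, it follows that $\StructureChoiceFunctor{R} d$ is a denominator in $\CategoryOfObjectsWithSReplacement{\mathcal{D}}{F}$.

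Therefore $\StructureChoiceFunctor{R}$ preserves denominators and is a morphism of categories with denominators. There is no real obstacle to overcome here: the statement is an immediate consequence of the fact that $\Denominators \CategoryOfObjectsWithSReplacement{\mathcal{D}}{F}$ was defined to be precisely the class of morphisms sent to denominators by $\ForgetfulFunctor$, combined with the identity $\ForgetfulFunctor \comp \StructureChoiceFunctor{R} = \id_{\mathcal{D}}$. Alternatively one can phrase the whole argument as: $\ForgetfulFunctor$ reflects denominators (Remark~\ref{rem:forgetful_functor_of_category_of_objects_with_s-replacement_preserves_and_reflects_denominators}), and $\ForgetfulFunctor \comp \StructureChoiceFunctor{R} = \id_{\mathcal{D}}$ trivially preserves denominators, whence $\StructureChoiceFunctor{R}$ preserves denominators.
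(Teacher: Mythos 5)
Your argument is correct and is precisely the (implicit) one intended by the paper, which states this corollary without proof because it follows immediately from the description of \(\StructureChoiceFunctor{R}\) in remark~\ref{rem:values_of_structure_choice_functor_for_s-replacements} together with the definition of \(\Denominators \CategoryOfObjectsWithSReplacement{\mathcal{D}}{F}\) via the forgetful functor in definition~\ref{def:object_with_s-replacement}. Nothing is missing.
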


Structure choice functors are isomorphism inverse to the forgetful functor from the structure category to the category of its underlying objects. We recall this fact in the case of a structure choice functor with respect to a choice of S-replacements and see that we obtain a pair of mutually isomorphism inverse equivalences on the localisation level:

\begin{remark} \label{rem:forgetful_functor_of_s-replacement_category_and_choices_of_s-replacements}
We suppose given a choice of S-replacements \(R = ((X_Y, q_Y))_{Y \in \Ob \mathcal{D}}\) for \(\mathcal{D}\) along~\(F\).
\begin{enumerate}
\item \label{rem:forgetful_functor_of_s-replacement_category_and_choices_of_s-replacements:retraction} We have
\[\ForgetfulFunctor \comp \StructureChoiceFunctor{R} = \id_{\mathcal{D}}.\]
\item \label{rem:forgetful_functor_of_s-replacement_category_and_choices_of_s-replacements:coretraction_up_to_isomorphism} We have
\[\StructureChoiceFunctor{R} \comp \ForgetfulFunctor \isomorphic \id_{\CategoryOfObjectsWithSReplacement{\mathcal{D}}{F}}.\]
An isotransformation \(\bar \alpha\colon \StructureChoiceFunctor{R} \comp \ForgetfulFunctor \map \id_{\CategoryOfObjectsWithSReplacement{\mathcal{D}}{F}}\) is given by
\[\bar \alpha_{(Y, X', q')} = 1_Y\colon (Y, X_Y, q_Y) \map (Y, X', q')\]
for \((Y, X', q') \in \Ob{\CategoryOfObjectsWithSReplacement{\mathcal{D}}{F}}\).
\end{enumerate}
In particular, \(\ForgetfulFunctor\colon \CategoryOfObjectsWithSReplacement{\mathcal{D}}{F} \map \mathcal{D}\) and \(\StructureChoiceFunctor{R}\colon \mathcal{D} \map \CategoryOfObjectsWithSReplacement{\mathcal{D}}{F}\) are mutually isomorphism inverse equivalences of categories.
\end{remark}
\begin{proof}
This follows from~\cite[prop.~(A.9)]{thomas:2012:a_calculus_of_fractions_for_the_homotopy_category_of_a_brown_cofibration_category}.
\end{proof}

\begin{corollary} \label{cor:forgetful_functor_of_s-replacement_category_induces_equivalence_of_categories_on_localisations}
We suppose given a choice of S-replacements \(R = ((X_Y, q_Y))_{Y \in \Ob \mathcal{D}}\) for \(\mathcal{D}\) along~\(F\).
\begin{enumerate}
\item \label{cor:forgetful_functor_of_s-replacement_category_induces_equivalence_of_categories_on_localisations:retraction} We have
\[\GabrielZismanLocalisation(\ForgetfulFunctor) \comp \GabrielZismanLocalisation(\StructureChoiceFunctor{R}) = \id_{\GabrielZismanLocalisation(\mathcal{D})}.\]
\item \label{cor:forgetful_functor_of_s-replacement_category_induces_equivalence_of_categories_on_localisations:coretraction_up_to_isomorphism} We have
\[\GabrielZismanLocalisation(\StructureChoiceFunctor{R}) \comp \GabrielZismanLocalisation(\ForgetfulFunctor) \isomorphic \id_{\GabrielZismanLocalisation(\CategoryOfObjectsWithSReplacement{\mathcal{D}}{F})}.\]
An isotransformation \(\bar \alpha\colon \GabrielZismanLocalisation(\StructureChoiceFunctor{R}) \comp \GabrielZismanLocalisation(\ForgetfulFunctor) \map \id_{\GabrielZismanLocalisation(\CategoryOfObjectsWithSReplacement{\mathcal{D}}{F})}\) is given by
\[\bar \alpha_{(Y, X', q')} = 1_Y\colon (Y, X_Y, q_Y) \map (Y, X', q')\] for \((Y, X', q') \in \Ob{\GabrielZismanLocalisation(\CategoryOfObjectsWithSReplacement{\mathcal{D}}{F})}\).
\end{enumerate}
In particular, \(\GabrielZismanLocalisation(\ForgetfulFunctor)\colon \GabrielZismanLocalisation(\CategoryOfObjectsWithSReplacement{\mathcal{D}}{F}) \map \GabrielZismanLocalisation(\mathcal{D})\) and \(\GabrielZismanLocalisation(\StructureChoiceFunctor{R})\colon \GabrielZismanLocalisation(\mathcal{D}) \map \GabrielZismanLocalisation(\CategoryOfObjectsWithSReplacement{\mathcal{D}}{F})\) are mutually isomorphism inverse equivalences of categories.
\end{corollary}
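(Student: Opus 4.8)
The plan is to deduce the corollary from Remark~\ref{rem:forgetful_functor_of_s-replacement_category_and_choices_of_s-replacements} by transporting everything through the $2$\nbd functor $\GabrielZismanLocalisation$. First I would record that the relevant functors are morphisms of categories with denominators: $\CategoryOfObjectsWithSReplacement{\mathcal{D}}{F}$ is a category with denominators by definition~\ref{def:object_with_s-replacement}, the forgetful functor $\ForgetfulFunctor\colon \CategoryOfObjectsWithSReplacement{\mathcal{D}}{F} \map \mathcal{D}$ preserves denominators by remark~\ref{rem:forgetful_functor_of_category_of_objects_with_s-replacement_preserves_and_reflects_denominators}, and the structure choice functor $\StructureChoiceFunctor{R}\colon \mathcal{D} \map \CategoryOfObjectsWithSReplacement{\mathcal{D}}{F}$ is a morphism of categories with denominators by corollary~\ref{cor:structure_choice_functor_for_s-replacements_is_morphism_of_categories_with_denominators}. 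In particular, $\bar\alpha$ is a $2$\nbd morphism of categories with denominators, so all the data of remark~\ref{rem:forgetful_functor_of_s-replacement_category_and_choices_of_s-replacements} lie in the source $2$\nbd category of $\GabrielZismanLocalisation$.

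For assertion~(a) I would simply apply functoriality of $\GabrielZismanLocalisation$ together with remark~\ref{rem:forgetful_functor_of_s-replacement_category_and_choices_of_s-replacements}\ref{rem:forgetful_functor_of_s-replacement_category_and_choices_of_s-replacements:retraction}:
\[\GabrielZismanLocalisation(\ForgetfulFunctor) \comp \GabrielZismanLocalisation(\StructureChoiceFunctor{R}) = \GabrielZismanLocalisation(\ForgetfulFunctor \comp \StructureChoiceFunctor{R}) = \GabrielZismanLocalisation(\id_{\mathcal{D}}) = \id_{\GabrielZismanLocalisation(\mathcal{D})}.\]
For assertion~(b) I would apply $\GabrielZismanLocalisation$ to the isotransformation $\bar\alpha\colon \StructureChoiceFunctor{R} \comp \ForgetfulFunctor \map \id_{\CategoryOfObjectsWithSReplacement{\mathcal{D}}{F}}$ of remark~\ref{rem:forgetful_functor_of_s-replacement_category_and_choices_of_s-replacements}\ref{rem:forgetful_functor_of_s-replacement_category_and_choices_of_s-replacements:coretraction_up_to_isomorphism}: since $\GabrielZismanLocalisation$ is a $2$\nbd functor, $\GabrielZismanLocalisation(\bar\alpha)$ is an isotransformation from $\GabrielZismanLocalisation(\StructureChoiceFunctor{R} \comp \ForgetfulFunctor) = \GabrielZismanLocalisation(\StructureChoiceFunctor{R}) \comp \GabrielZismanLocalisation(\ForgetfulFunctor)$ to $\GabrielZismanLocalisation(\id_{\CategoryOfObjectsWithSReplacement{\mathcal{D}}{F}}) = \id_{\GabrielZismanLocalisation(\CategoryOfObjectsWithSReplacement{\mathcal{D}}{F})}$. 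To identify its components I would unwind the defining relation $\LocalisationFunctor[\GabrielZismanLocalisation(\mathcal{D})] * \bar\alpha = \GabrielZismanLocalisation(\bar\alpha) * \LocalisationFunctor[\GabrielZismanLocalisation(\CategoryOfObjectsWithSReplacement{\mathcal{D}}{F})]$ at an object $(Y, X', q')$, using $\Ob \LocalisationFunctor = \id$ and $\LocalisationFunctor(1_Y) = 1_Y$, which yields $(\GabrielZismanLocalisation(\bar\alpha))_{(Y, X', q')} = 1_Y\colon (Y, X_Y, q_Y) \map (Y, X', q')$ as claimed. The final ``in particular'' is then immediate from~(a) and~(b) by the definition of mutually isomorphism inverse equivalences.

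There is essentially no obstacle here beyond bookkeeping: the statement is a formal consequence of $2$\nbd functoriality of $\GabrielZismanLocalisation$ applied to remark~\ref{rem:forgetful_functor_of_s-replacement_category_and_choices_of_s-replacements}, and the only point that needs a line of care is checking that $\GabrielZismanLocalisation(\bar\alpha)$ has the asserted components. (Alternatively, one could obtain that $\GabrielZismanLocalisation(\ForgetfulFunctor)$ is an equivalence with inverse $\GabrielZismanLocalisation(\StructureChoiceFunctor{R})$ from corollary~\ref{cor:criterion_for_inducing_an_equivalence_on_localisations} applied to $F := \ForgetfulFunctor$ and $G := \LocalisationFunctor \comp \StructureChoiceFunctor{R}$, but the direct argument via $2$\nbd functoriality is shorter and also pins down the explicit isotransformation.)
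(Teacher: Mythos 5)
Your proposal is correct and follows essentially the same route as the paper: part (a) by plain functoriality of \(\GabrielZismanLocalisation\) applied to remark~\ref{rem:forgetful_functor_of_s-replacement_category_and_choices_of_s-replacements}\ref{rem:forgetful_functor_of_s-replacement_category_and_choices_of_s-replacements:retraction}, and part (b) by applying the \(2\)\nbd functor \(\GabrielZismanLocalisation\) to the isotransformation of remark~\ref{rem:forgetful_functor_of_s-replacement_category_and_choices_of_s-replacements}\ref{rem:forgetful_functor_of_s-replacement_category_and_choices_of_s-replacements:coretraction_up_to_isomorphism} and reading off its components via the defining relation with the localisation functors (in which the whiskering on the left should of course be by \(\LocalisationFunctor[\GabrielZismanLocalisation(\CategoryOfObjectsWithSReplacement{\mathcal{D}}{F})]\), since both functors land in \(\CategoryOfObjectsWithSReplacement{\mathcal{D}}{F}\)). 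Your preliminary bookkeeping that \(\ForgetfulFunctor\) and \(\StructureChoiceFunctor{R}\) are morphisms of categories with denominators is exactly what the paper relies on implicitly via remark~\ref{rem:forgetful_functor_of_category_of_objects_with_s-replacement_preserves_and_reflects_denominators} and corollary~\ref{cor:structure_choice_functor_for_s-replacements_is_morphism_of_categories_with_denominators}.
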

\begin{proof} \
\begin{enumerate}
\item By remark~\ref{rem:forgetful_functor_of_s-replacement_category_and_choices_of_s-replacements}\ref{rem:forgetful_functor_of_s-replacement_category_and_choices_of_s-replacements:retraction}, we have
\[\GabrielZismanLocalisation(\ForgetfulFunctor) \comp \GabrielZismanLocalisation(\StructureChoiceFunctor{R}) = \GabrielZismanLocalisation(\ForgetfulFunctor \comp \StructureChoiceFunctor{R}) = \GabrielZismanLocalisation(\id_{\mathcal{D}}) = \id_{\GabrielZismanLocalisation(\mathcal{D})}.\]
\item By remark~\ref{rem:forgetful_functor_of_s-replacement_category_and_choices_of_s-replacements}\ref{rem:forgetful_functor_of_s-replacement_category_and_choices_of_s-replacements:coretraction_up_to_isomorphism}, we have an isotransformation \(\bar \alpha'\colon \StructureChoiceFunctor{R} \comp \ForgetfulFunctor \map \id_{\CategoryOfObjectsWithSReplacement{\mathcal{D}}{F}}\) given by
\[\bar \alpha'_{(Y, X', q')} = 1_Y\colon (Y, X_Y, q_Y) \map (Y, X', q')\]
for \((Y, X', q') \in \Ob{\CategoryOfObjectsWithSReplacement{\mathcal{D}}{F}}\). But then \(\bar \alpha := \GabrielZismanLocalisation(\bar \alpha')\) is an isotransformation from \(\GabrielZismanLocalisation(\StructureChoiceFunctor{R} \comp \ForgetfulFunctor) = \GabrielZismanLocalisation(\StructureChoiceFunctor{R}) \comp \GabrielZismanLocalisation(\ForgetfulFunctor)\) to \(\GabrielZismanLocalisation(\id_{\CategoryOfObjectsWithSReplacement{\mathcal{D}}{F}}) = \id_{\GabrielZismanLocalisation(\CategoryOfObjectsWithSReplacement{\mathcal{D}}{F})}\) by \(2\)-functoriality, given by
\[\bar \alpha_{(Y, X', q')} = \LocalisationFunctor[\GabrielZismanLocalisation(\mathcal{D})](\bar \alpha'_{(Y, X', q')}) = \LocalisationFunctor[\GabrielZismanLocalisation(\mathcal{D})](1_Y) = 1_Y\colon (Y, X_Y, q_Y) \map (Y, X', q')\]
for \((Y, X', q') \in \Ob{\GabrielZismanLocalisation(\CategoryOfObjectsWithSReplacement{\mathcal{D}}{F})} = \Ob{\CategoryOfObjectsWithSReplacement{\mathcal{D}}{F}}\). \qedhere
\end{enumerate}
\end{proof}

\section{S-equivalences and the characterising conditions} \label{sec:s-equivalences_and_the_characterising_conditions}

Next, we introduce S-equivalences, that is, those morphisms of categories with denominators inducing equivalences on the localisation level that we want to characterise in this article, as well as the characterising conditions.

Throughout this section, we suppose given a morphism of categories with denominators~\(F\colon \mathcal{C} \map \mathcal{D}\).

\subsection*{S-density} \label{sec:s-equivalences_and_the_characterising_conditions:s-density}

We begin with the restriction we put on \(F\colon \mathcal{C} \map \mathcal{D}\) that ensures that \(\GabrielZismanLocalisation(F)\colon \GabrielZismanLocalisation(\mathcal{C}) \map \GabrielZismanLocalisation(\mathcal{D})\) is dense.

\begin{definition}[S-dense] \label{def:s-density}
We say that~\(F\) is \newnotion{S-dense} if \(\mathcal{D}\) has enough S-replacements along \(F\).
\end{definition}

So \(F\) is S-dense if and only if for every object \(Y\) in \(\mathcal{D}\) there exists an S-replacement of \(Y\) along \(F\).

\begin{remark} \label{rem:s-density_implies_density_on_localisations}
If \(F\) is S-dense, then \(\GabrielZismanLocalisation(F)\colon \GabrielZismanLocalisation(\mathcal{C}) \map \GabrielZismanLocalisation(\mathcal{D})\) is dense.
\end{remark}
\begin{proof}
We suppose that \(F\) is S-dense and we suppose given an object \(Y\) in \(\mathcal{D}\). Then there exists an S{\nbd}replacement~\((X, q)\) of \(Y\) along \(F\). As \(q\colon F X \map Y\) is a denominator in \(\mathcal{D}\), it follows that \(\LocalisationFunctor(q)\colon F X \map Y\) is an isomorphism in \(\GabrielZismanLocalisation(\mathcal{D})\), and so we have 
\[Y \isomorphic F X = \GabrielZismanLocalisation(F) X\]
in \(\GabrielZismanLocalisation(\mathcal{D})\). Thus \(\GabrielZismanLocalisation(F)\) is dense.
\end{proof}

We will give a characterisation of S-density (under additional assumptions on the degree of saturatedness of \(\mathcal{D}\)) via the forgetful functor \(\ForgetfulFunctor\colon \CategoryOfObjectsWithSReplacement{\mathcal{D}}{F} \map \mathcal{D}\) in proposition~\ref{prop:characterisation_of_s-replacement_axiom}.

\subsection*{S-equivalences} \label{sec:s-equivalences_and_the_characterising_conditions:s-equivalences}

The primary objective of this article is to characterise when \(F\) is an S-equivalence in the following sense.

\begin{definition}[S-equivalence] \label{def:s-equivalence}
The morphism of categories with denominators~\(F\) is called an \newnotion{S-equivalence} if it is S-dense and \(\GabrielZismanLocalisation(F)\colon \GabrielZismanLocalisation(\mathcal{C}) \map \GabrielZismanLocalisation(\mathcal{D})\) is an equivalence.
\end{definition}

The characterisation of S-equivalences will be given in corollary~\ref{cor:characterisation_of_s-equivalences_as_s-dense_s-full_and_s-faithful_morphisms_of_categories_with_denominators}, which is based on the S-approximation theorem~\ref{th:s-approximation_theorem}, where an isomorphism inverse to \(\GabrielZismanLocalisation(F)\colon \GabrielZismanLocalisation(\mathcal{C}) \map \GabrielZismanLocalisation(\mathcal{D})\) is constructed.

\subsection*{S-fullness and S-faithfulness} \label{sec:s-equivalences_and_the_characterising_conditions:s-fullness_and_s-faithfulness}

While S-density is already part of the definition of an S-equivalence, we will now introduce the remaining characterising conditions -- S-fullness and S-faithfulness.

\begin{definition}[S-fullness] \label{def:s-fullness}
We say that~\(F\) is \newnotion{S-full} if for all objects \(X\) and \(X'\) in \(\mathcal{C}\) and every S{\nbd}\(2\){\nbd}arrow~\((g, b)\colon \Stwoarrow{F X}{\tilde Y'}{F X'}\) in \(\mathcal{D}\) there exists a morphism \(\varphi\colon X \map X'\) in \(\GabrielZismanLocalisation(\mathcal{C})\) such that
\[\LocalisationFunctor(g) \, \LocalisationFunctor(b)^{- 1} = \GabrielZismanLocalisation(F) \varphi\]
in \(\GabrielZismanLocalisation(\mathcal{D})\).
\end{definition}

So roughly said, S-fullness of \(F\) means {``}fullness of \(\GabrielZismanLocalisation(F)\) on S-\(2\)-arrows{''}.

\begin{remark} \label{rem:s-fullness_implies_fullness_on_localisations}
If \(\GabrielZismanLocalisation(F)\colon \GabrielZismanLocalisation(\mathcal{C}) \map \GabrielZismanLocalisation(\mathcal{D})\) is full, then \(F\) is S-full.
\end{remark}

\begin{proposition} \label{prop:s-density_and_s-fullness_implies_fullness_of_induced_functor}
We suppose that \(\mathcal{D}\) is multiplicative and that \(F\) is S-dense. Then \(F\) is S-full if and only if~\(\GabrielZismanLocalisation(F)\colon \GabrielZismanLocalisation(\mathcal{C}) \map \GabrielZismanLocalisation(\mathcal{D})\) is full.
\end{proposition}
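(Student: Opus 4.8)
The implication {``}$\GabrielZismanLocalisation(F)$ full $\Rightarrow$ $F$ S-full{''} is precisely Remark~\ref{rem:s-fullness_implies_fullness_on_localisations}, and it needs neither multiplicativity nor S-density; so the plan is to prove the converse. Assume from now on that $\mathcal{D}$ is multiplicative and that $F$ is S-dense and S-full. Since $\Ob \GabrielZismanLocalisation(F) = \Ob F$, fullness of $\GabrielZismanLocalisation(F)$ means: for all $X, X' \in \Ob \mathcal{C}$ and every morphism $\psi\colon F X \map F X'$ in $\GabrielZismanLocalisation(\mathcal{D})$ there is $\varphi\colon X \map X'$ in $\GabrielZismanLocalisation(\mathcal{C})$ with $\GabrielZismanLocalisation(F) \varphi = \psi$. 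I would obtain this from a slightly more general statement, tailored to an induction over the {``}zigzag length{''} of $\psi$.

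The claim to prove by induction on $n \in \Naturals$ is the following: for every $X \in \Ob \mathcal{C}$, every $Y \in \Ob \mathcal{D}$, every S-replacement $(Z, q)$ of $Y$ along $F$, and every morphism $\psi\colon F X \map Y$ in $\GabrielZismanLocalisation(\mathcal{D})$ of the form $\psi = \LocalisationFunctor(f_1) \, \LocalisationFunctor(a_1)^{-1} \, \LocalisationFunctor(f_2) \, \dots \, \LocalisationFunctor(a_{n - 1})^{-1} \, \LocalisationFunctor(f_n)$ coming from a zigzag $F X = Y_0 \map \tilde Y_1 \leftarrow Y_1 \map \tilde Y_2 \leftarrow \dots \map \tilde Y_n = Y$ in $\mathcal{D}$ with forward arrows $f_1, \dots, f_n$ and backward arrows denominators $a_1, \dots, a_{n - 1}$, there exists $\varphi\colon X \map Z$ in $\GabrielZismanLocalisation(\mathcal{C})$ with $\GabrielZismanLocalisation(F) \varphi \, \LocalisationFunctor(q) = \psi$. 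Every morphism of $\GabrielZismanLocalisation(\mathcal{D})$ can be written in this form for some $n$ (see Section~\ref{sec:preliminaries}); hence, applying the claim with $Y = F X'$ and the trivial S-replacement $(X', 1_{F X'})$ — which is an S-replacement because $1_{F X'}$ is a denominator by multiplicativity of $\mathcal{D}$, cf.\ Remark~\ref{rem:multiplicativity_implies_having_all_trivial_s-replacements} — and using $\LocalisationFunctor(1_{F X'}) = 1_{F X'}$, one gets fullness of $\GabrielZismanLocalisation(F)$.

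For $n = 1$ one has $\psi = \LocalisationFunctor(f_1)$ with $f_1\colon F X \map Y$ in $\mathcal{D}$; then $(f_1, q)$ is an S-$2$-arrow $F X \map Y \leftarrow F Z$, and S-fullness (Definition~\ref{def:s-fullness}, applied to the pair $X, Z$) provides $\varphi\colon X \map Z$ in $\GabrielZismanLocalisation(\mathcal{C})$ with $\GabrielZismanLocalisation(F) \varphi = \LocalisationFunctor(f_1) \, \LocalisationFunctor(q)^{-1}$, whence $\GabrielZismanLocalisation(F) \varphi \, \LocalisationFunctor(q) = \psi$. For $n \geq 2$, use S-density to pick an S-replacement $(Z_1, q_1)$ of the first valley $Y_1$, so that $q_1\colon F Z_1 \map Y_1$ is a denominator, and split $\psi = \LocalisationFunctor(f_1) \, \LocalisationFunctor(a_1)^{-1} \, \psi'$ with $\psi' = \LocalisationFunctor(f_2) \, \dots \, \LocalisationFunctor(f_n)\colon Y_1 \map Y$. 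Since $\mathcal{D}$ is multiplicative, $q_1 a_1\colon F Z_1 \map \tilde Y_1$ is a denominator, so $(f_1, q_1 a_1)$ is an S-$2$-arrow $F X \map \tilde Y_1 \leftarrow F Z_1$ and S-fullness yields $\chi\colon X \map Z_1$ in $\GabrielZismanLocalisation(\mathcal{C})$ with $\GabrielZismanLocalisation(F) \chi = \LocalisationFunctor(f_1) \, \LocalisationFunctor(a_1)^{-1} \, \LocalisationFunctor(q_1)^{-1}$. On the other hand $\LocalisationFunctor(q_1) \, \psi' = \LocalisationFunctor(q_1 f_2) \, \LocalisationFunctor(a_2)^{-1} \, \dots \, \LocalisationFunctor(f_n)\colon F Z_1 \map Y$ comes from a zigzag of length $n - 1$, so the induction hypothesis — for the object $Z_1$ and the S-replacement $(Z, q)$ of $Y$ — gives $\varphi'\colon Z_1 \map Z$ in $\GabrielZismanLocalisation(\mathcal{C})$ with $\GabrielZismanLocalisation(F) \varphi' \, \LocalisationFunctor(q) = \LocalisationFunctor(q_1) \, \psi'$. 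Setting $\varphi := \chi \, \varphi'$ and using functoriality of $\GabrielZismanLocalisation(F)$ together with $\LocalisationFunctor(q_1)^{-1} \, \LocalisationFunctor(q_1) = 1_{F Z_1}$, one computes $\GabrielZismanLocalisation(F) \varphi \, \LocalisationFunctor(q) = \LocalisationFunctor(f_1) \, \LocalisationFunctor(a_1)^{-1} \, \LocalisationFunctor(q_1)^{-1} \, \LocalisationFunctor(q_1) \, \psi' = \LocalisationFunctor(f_1) \, \LocalisationFunctor(a_1)^{-1} \, \psi' = \psi$, which closes the induction.

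There is no genuine difficulty here; the argument is a bookkeeping induction on zigzag length in $\GabrielZismanLocalisation(\mathcal{D})$ (the same manipulation underlies the S-approximation theorem). The one point that must be got right is the formulation of the inductive statement: it has to be stated for an \emph{arbitrary} target $Y$ equipped with an \emph{arbitrary} S-replacement $(Z, q)$, rather than only for $Y = F X'$ with the trivial replacement, because the tail $\psi'$ left after splitting off $\LocalisationFunctor(f_1) \, \LocalisationFunctor(a_1)^{-1}$ emanates from the valley $Y_1$, which is not an object of the form $F Z_1$ until one inserts an S-replacement. This insertion is exactly where S-density is used (to produce $(Z_1, q_1)$) and where multiplicativity of $\mathcal{D}$ is used (so that $q_1 a_1$ is again a denominator, making $(f_1, q_1 a_1)$ an S-$2$-arrow to which S-fullness applies).
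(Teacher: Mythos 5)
Your proof is correct and is in substance the same as the paper's: both arguments insert, via S-density, an S-replacement \((X_i, q_i)\) at every intermediate valley \(Y_i\) of the zigzag, use multiplicativity so that the composites \(q_i b_i\) are again denominators, apply S-fullness to each resulting S-\(2\)-arrow, and compose the outcomes. The only difference is presentational: you package the telescoping as an induction on zigzag length with a general target equipped with an arbitrary S-replacement, whereas the paper treats all segments at once and collapses the product \(\LocalisationFunctor(q_0 g_1)\,\LocalisationFunctor(q_1 b_1)^{-1}\cdots\LocalisationFunctor(q_n b_n)^{-1}\) directly.
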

\begin{proof}
If \(\GabrielZismanLocalisation(F)\) is full, then in particular \(F\) is S-full. Conversely, we suppose that \(F\) is S-full. To show that~\(\GabrielZismanLocalisation(F)\) is full, we suppose given objects~\(X\) and \(X'\) in \(\mathcal{C}\) and a morphism \(\psi\colon F X \map F X'\) in \(\GabrielZismanLocalisation(\mathcal{D})\). Moreover, we choose~\(n \in \Naturals\), morphisms \(g_i\colon Y_{i - 1} \map \tilde Y_i\) in \(\mathcal{D}\) for~\(i \in [1, n]\) and denominators \(b_i\colon Y_i \map \tilde Y_i\) in \(\mathcal{D}\) for~\(i \in [1, n - 1]\) such that \(F X = Y_0\), \(F X' = \tilde Y_n\) and
\[\psi = \LocalisationFunctor(g_1) \, \LocalisationFunctor(b_1)^{- 1} \, \LocalisationFunctor(g_2) \, \dots \, \LocalisationFunctor(b_{n - 1})^{- 1} \, \LocalisationFunctor(g_n).\]
Since \(\mathcal{D}\) is multiplicative, the identity \(1_{F X'}\colon F X' \map F X'\) is a denominator in \(\mathcal{D}\). We set \(Y_n := F X'\) and~\(b_n := 1_{F X'}\). Moreover, since \(F\) is S-dense, for \(i \in [1, n - 1]\) there exists an S-replacement \((X_i, q_i)\) of~\(Y_i\). We set \((X_0, q_0) := (X, 1_{F X})\) and \((X_n, q_n) := (X', 1_{F X'})\). Then \(q_i b_i\) is a denominator in \(\mathcal{D}\) for~\(i \in [1, n]\) by multiplicativity.
\[\begin{tikzpicture}[baseline=(m-2-1.base)]
  \matrix (m) [diagram]{
    F X & \tilde Y_1 & F X_1 & \dots & F X_{n - 1} & F X' & F X' \\
    F X & \tilde Y_1 & Y_1 & \dots & Y_{n - 1} & F X' & F X' \\ };
  \path[->, font=\scriptsize]
    (m-1-1) edge node[above] {\(g_1\)} (m-1-2)
            edge[den] node[right] {\(1_{F X}\)} (m-2-1)
    (m-1-2) edge[equality] ([yshift=2pt]m-2-2.north)
    (m-1-3) edge node[above] {\(q_1 g_2\)} (m-1-4)
            edge[den] node[right] {\(q_1\)} (m-2-3)
            edge[den] node[above] {\(q_1 b_1\)} (m-1-2)
    (m-1-5) edge node[above] {\(q_{n - 1} g_n\)} (m-1-6)
            edge[den] node[right] {\(q_{n - 1}\)} (m-2-5)
            edge[den] node[above] {\(q_{n - 1} b_{n - 1}\)} (m-1-4)
    (m-1-6) edge[equality] (m-2-6)
    (m-1-7) edge[den] node[above] {\(1_{F X'}\)} (m-1-6)
            edge[den] node[right] {\(1_{F X'}\)} (m-2-7)
    (m-2-1) edge node[above] {\(g_1\)} (m-2-2)
    (m-2-3) edge node[above] {\(g_2\)} (m-2-4)
            edge[den] node[above] {\(b_1\)} (m-2-2)
    (m-2-5) edge node[above] {\(g_n\)} (m-2-6)
            edge[den] node[above] {\(b_{n - 1}\)} (m-2-4)
    (m-2-7) edge[den] node[above] {\(1_{F X'}\)} (m-2-6);
\end{tikzpicture}\]
Now the S-fullness of \(F\) implies that for \(i \in [1, n]\) there exists a morphism \(\varphi_i\colon X_{i - 1} \map X_i\) in \(\GabrielZismanLocalisation(\mathcal{C})\) such that \(\LocalisationFunctor(q_{i - 1} g_i) \, \LocalisationFunctor(q_i b_i)^{- 1} = \GabrielZismanLocalisation(F) \varphi_i\). We obtain
\begin{align*}
\psi & = \LocalisationFunctor(g_1) \, \LocalisationFunctor(b_1)^{- 1} \, \LocalisationFunctor(g_2) \, \dots \, \LocalisationFunctor(b_{n - 1})^{- 1} \, \LocalisationFunctor(g_n) \\
& = \LocalisationFunctor(q_0 g_1) \, \LocalisationFunctor(q_1 b_1)^{- 1} \, \LocalisationFunctor(q_1 g_2) \, \dots \, \LocalisationFunctor(q_{n - 1} b_{n - 1})^{- 1} \, \LocalisationFunctor(q_{n - 1} g_n) \, \LocalisationFunctor(q_{n} b_n)^{- 1} \\
& = (\GabrielZismanLocalisation(F) \varphi_1) (\GabrielZismanLocalisation(F) \varphi_2) \dots (\GabrielZismanLocalisation(F) \varphi_n) = \GabrielZismanLocalisation(F)(\varphi_1 \varphi_2 \dots \varphi_n).
\end{align*}
Thus \(\GabrielZismanLocalisation(F)\) is full.
\end{proof}

\begin{definition}[S-faithfulness] \label{def:s-faithfulness}
We say that~\(F\) is \newnotion{S-faithful} if for all objects \(X\) and \(X'\) in \(\mathcal{C}\), every S{\nbd}\(2\){\nbd}arrow~\((g, b)\colon \Stwoarrow{F X}{\tilde Y'}{F X'}\) in \(\mathcal{D}\) and all morphisms \(\varphi_1, \varphi_2\colon X \map X'\) in \(\GabrielZismanLocalisation(\mathcal{C})\) such that
\[\LocalisationFunctor(g) \, \LocalisationFunctor(b)^{- 1} = \GabrielZismanLocalisation(F) \varphi_1 = \GabrielZismanLocalisation(F) \varphi_2\]
in \(\GabrielZismanLocalisation(\mathcal{D})\), we have
\[\varphi_1 = \varphi_2\]
in \(\GabrielZismanLocalisation(\mathcal{C})\).
\end{definition}

So roughly said, S-faithfulness of \(F\) means {``}faithfulness of \(\GabrielZismanLocalisation(F)\) on S-\(2\)-arrows{''}.

\begin{remark} \label{rem:faithfulness_of_induced_functor_implies_s-faithfulness}
If \(\GabrielZismanLocalisation(F)\colon \GabrielZismanLocalisation(\mathcal{C}) \map \GabrielZismanLocalisation(\mathcal{D})\) is faithful, then \(F\colon \mathcal{C} \map \mathcal{D}\) is S-faithful.
\end{remark}

Under the (mild) additional assumption that \(\mathcal{D}\) is multiplicative we will show that \(F\) is an S-equivalence if and only if it is S-dense, S-full and S-faithful, see corollary~\ref{cor:characterisation_of_s-equivalences_as_s-dense_s-full_and_s-faithful_morphisms_of_categories_with_denominators}.

\section{S-replacement functors and the S-approximation theorem} \label{sec:s-replacement_functors_and_the_s-approximation_theorem}

We suppose given a morphism of categories with denominators \(F\colon \mathcal{C} \map \mathcal{D}\). The aim of this section is the construction of an isomorphism inverse to \(\GabrielZismanLocalisation(F)\colon \GabrielZismanLocalisation(\mathcal{C}) \map \GabrielZismanLocalisation(\mathcal{D})\), provided that \(\mathcal{D}\) is multiplicative and \(F\) fulfils the conditions of S-density, S-fullness and S-faithfulness defined in the previous section.

We give a sketch of this construction: First, we show that \(F\) lifts to the category of objects with S-replacement in \(\mathcal{D}\) along \(F\), see remark~\ref{rem:well-definedness_of_canonical_lift_along_forgetful_functor_of_s-replacement_category}, that is, we show that there exists a morphism of categories with denominators~\(\bar F\colon \mathcal{C} \map \CategoryOfObjectsWithSReplacement{\mathcal{D}}{F}\) such that the following triangle on the left commutes. By the functoriality of the Gabriel/Zisman localisation, this commutative triangle on the left induces the following commutative triangle on the right.
\[\begin{tikzpicture}[baseline=(m-2-1.base)]
  \matrix (m) [diagram]{
    & \CategoryOfObjectsWithSReplacement{\mathcal{D}}{F} & \\
    \mathcal{C} & & \mathcal{D} \\ };
  \path[->, font=\scriptsize]
    (m-2-1) edge node[above] {\(F\)} (m-2-3)
            edge node[above left=-2pt] {\(\bar F\)} (m-1-2)
    (m-1-2) edge node[above right=-2pt] {\(\ForgetfulFunctor\)} (m-2-3);
\end{tikzpicture}
\qquad
\begin{tikzpicture}[baseline=(m-2-1.base)]
  \matrix (m) [diagram]{
    & \GabrielZismanLocalisation(\CategoryOfObjectsWithSReplacement{\mathcal{D}}{F}) & \\
    \GabrielZismanLocalisation(\mathcal{C}) & & \GabrielZismanLocalisation(\mathcal{D}) \\ };
  \path[->, font=\scriptsize]
    (m-2-1) edge node[above] {\(\GabrielZismanLocalisation(F)\)} (m-2-3)
            edge node[above left=-2pt] {\(\GabrielZismanLocalisation(\bar F)\)} (m-1-2)
    (m-1-2) edge node[above right=-2pt] {\(\GabrielZismanLocalisation(\ForgetfulFunctor)\)} (m-2-3);
\end{tikzpicture}\]

By remark~\ref{rem:forgetful_functor_of_s-replacement_category_and_choices_of_s-replacements} we already know that the forgetful functor \(\ForgetfulFunctor\colon \CategoryOfObjectsWithSReplacement{\mathcal{D}}{F} \map \mathcal{D}\) is an equivalence of categories if \(F\colon \mathcal{C} \map \mathcal{D}\) is~S{\nbd}dense, where an isomorphism inverse \(\StructureChoiceFunctor{R}\colon \mathcal{D} \map \CategoryOfObjectsWithSReplacement{\mathcal{D}}{F}\) is constructed by a choice of an S{\nbd}replacement for each object in \(\mathcal{D}\), see definition~\ref{def:choice_of_s-replacements} and remark~\ref{rem:forgetful_functor_of_s-replacement_category_and_choices_of_s-replacements}. This pair of mutually inverse equivalences induces a pair of mutually inverse equivalences on the localisation level, see corollary~\ref{cor:forgetful_functor_of_s-replacement_category_induces_equivalence_of_categories_on_localisations}.

So in order to show that the functor \(\GabrielZismanLocalisation(F)\colon \GabrielZismanLocalisation(\mathcal{C}) \map \GabrielZismanLocalisation(\mathcal{D})\) is an equivalence of categories, it suffices to show that~\(\GabrielZismanLocalisation(\bar F)\colon \GabrielZismanLocalisation(\CategoryOfObjectsWithSReplacement{\mathcal{D}}{F}) \map \GabrielZismanLocalisation(\mathcal{D})\) is an equivalence of categories. To this end, we construct the so-called total S-replacement functor \(\TotalSReplacementFunctor F\colon \CategoryOfObjectsWithSReplacement{\mathcal{D}}{F} \map \GabrielZismanLocalisation(\mathcal{C})\), see proposition~\ref{prop:well-definedness_of_total_s-replacement_functor}, which induces an isomorphism inverse \(\InducedFunctorOfTotalSReplacementFunctor F\colon \GabrielZismanLocalisation(\CategoryOfObjectsWithSReplacement{\mathcal{D}}{F}) \map \GabrielZismanLocalisation(\mathcal{C})\) to~\(\GabrielZismanLocalisation(\bar F)\colon \GabrielZismanLocalisation(\mathcal{C}) \map \GabrielZismanLocalisation(\CategoryOfObjectsWithSReplacement{\mathcal{D}}{F})\), see corollary~\ref{cor:total_s-replacement_functor_maps_denominators_to_isomorphisms} and corollary~\ref{cor:canonical_lift_along_forgetful_functor_of_s-replacement_category_induces_equivalence_on_localisations}.

\[\begin{tikzpicture}[baseline=(m-2-1.base)]
  \matrix (m) [diagram=4.0em]{
    \mathcal{C} & \CategoryOfObjectsWithSReplacement{\mathcal{D}}{F} & \mathcal{D} \\
    \GabrielZismanLocalisation(\mathcal{C}) & \GabrielZismanLocalisation(\CategoryOfObjectsWithSReplacement{\mathcal{D}}{F}) & \GabrielZismanLocalisation(\mathcal{D}) \\ };
  \path[->, font=\scriptsize]
    (m-1-1) edge node[above] {\(\bar F\)} (m-1-2)
            edge node[right] {\(\LocalisationFunctor\)} (m-2-1)
    (m-1-2) edge node[above] {\(\ForgetfulFunctor\)} node[below=1pt] {\(\categoricallyequivalent\)} (m-1-3)
            edge node[right] {\(\LocalisationFunctor\)} (m-2-2)
            edge node[above left=-3pt] {\(\TotalSReplacementFunctor F\)} (m-2-1)
    (m-1-3) edge node[right] {\(\LocalisationFunctor\)} (m-2-3)
    (m-1-3.south west) edge[bend left=15] node[below] {\(\StructureChoiceFunctor{R}\)} (m-1-2.south east)
    (m-2-1) edge node[above] {\(\GabrielZismanLocalisation(\bar F)\)} node[below] {\(\categoricallyequivalent\)} (m-2-2)
    (m-2-2) edge node[above] {\(\GabrielZismanLocalisation(\ForgetfulFunctor)\)} node[below] {\(\categoricallyequivalent\)} (m-2-3)
    (m-2-2.south west) edge[bend left=15] node[below] {\(\InducedFunctorOfTotalSReplacementFunctor F\)} (m-2-1.south east)
    (m-2-3.south west) edge[bend left=15] node[below] {\(\GabrielZismanLocalisation(\StructureChoiceFunctor{R})\)} (m-2-2.south east);
\end{tikzpicture}\]

The proof of the S-approximation theorem~\ref{th:s-approximation_theorem} is concluded by showing that an isomorphism inverse \linebreak 
to~\(\GabrielZismanLocalisation(F)\colon \GabrielZismanLocalisation(\mathcal{C}) \map \GabrielZismanLocalisation(\mathcal{D})\) can be induced by a so-called S-replacement functor \(\SReplacementFunctor F\colon \mathcal{D} \map \GabrielZismanLocalisation(\mathcal{C})\) that is given as composite \(\SReplacementFunctor F = \TotalSReplacementFunctor F \comp \StructureChoiceFunctor{R}\), see definition~\ref{def:s-replacement_functor}.

As the structure choice functor \(\StructureChoiceFunctor{R}\colon \mathcal{D} \map \CategoryOfObjectsWithSReplacement{\mathcal{D}}{F}\) depends on a choice of S-replacements, this also holds for the S-replacement functor \(\SReplacementFunctor F = \TotalSReplacementFunctor F \comp \StructureChoiceFunctor{R}\). Thus the total S-replacement functor \(\TotalSReplacementFunctor F\) may be seen as a uniform variant of the various possible isomorphism inverse inducing S-replacement functors, which do necessitate choices.

Throughout this section, we suppose given a morphism of categories with denominators~\(F\colon \mathcal{C} \map \mathcal{D}\).

\subsection*{The canonical lift} \label{sec:s-replacement_functors_and_the_s-approximation_theorem:the_canonical_lift}

Under the assumption that \(\mathcal{D}\) has all trivial S-replacements along \(F\), we may lift \(F\) to the corresponding category of objects with S-replacement:

\begin{remark} \label{rem:well-definedness_of_canonical_lift_along_forgetful_functor_of_s-replacement_category}
We suppose that \(\mathcal{D}\) has all trivial S-replacements along \(F\).
\begin{enumerate}
\item \label{rem:well-definedness_of_canonical_lift_along_forgetful_functor_of_s-replacement_category:functoriality} We have a functor \(\bar F\colon \mathcal{C} \map \CategoryOfObjectsWithSReplacement{\mathcal{D}}{F}\), given on the objects by
\[\bar F X = (F X, X, 1_{F X})\]
for \(X \in \Ob \mathcal{C}\), and on the morphisms by
\[\bar F f = F f\colon (F X, X, 1_{F X}) \map (F X', X', 1_{F X'})\]
for every morphism \(f\colon X \map X'\) in \(\mathcal{C}\).
\[\begin{tikzpicture}[baseline=(m-2-1.base)]
  \matrix (m) [diagram]{
    F X \\
    F X \\ };
  \path[->, font=\scriptsize]
    (m-1-1) edge[den] node[right] {\(1_{F X}\)} (m-2-1);
\end{tikzpicture}
\qquad
\begin{tikzpicture}[baseline=(m-2-1.base)]
  \matrix (m) [diagram]{
    F X & F X' \\
    F X & F X' \\ };
  \path[->, font=\scriptsize]
    (m-1-1) edge node[above] {\(F f\)} (m-1-2)
            edge[den] node[left] {\(1_{F X}\)} (m-2-1)
    (m-1-2) edge[den] node[right] {\(1_{F X'}\)} (m-2-2)
    (m-2-1) edge node[above] {\(F f\)} (m-2-2);
\end{tikzpicture}\]
\item \label{rem:well-definedness_of_canonical_lift_along_forgetful_functor_of_s-replacement_category:factorisation} We have
\[F = \ForgetfulFunctor \comp \bar F.\]
\[\begin{tikzpicture}[baseline=(m-2-1.base)]
  \matrix (m) [diagram]{
    & \CategoryOfObjectsWithSReplacement{\mathcal{D}}{F} & \\
    \mathcal{C} & & \mathcal{D} \\ };
  \path[->, font=\scriptsize]
    (m-2-1) edge node[above] {\(F\)} (m-2-3)
            edge node[above left=-2pt] {\(\bar F\)} (m-1-2)
    (m-1-2) edge node[above right=-2pt] {\(\ForgetfulFunctor\)} (m-2-3);
\end{tikzpicture}\]
\end{enumerate}
\end{remark}

\begin{definition}[canonical lift] \label{def:canonical_lift_along_forgetful_functor_of_s-replacement_category}
We suppose that \(\mathcal{D}\) has all trivial S-replacements along \(F\). The functor~\(\bar F\colon \mathcal{C} \map \CategoryOfObjectsWithSReplacement{\mathcal{D}}{F}\) in remark~\ref{rem:well-definedness_of_canonical_lift_along_forgetful_functor_of_s-replacement_category} is called the \newnotion{canonical lift} of~\(F\) along \(\ForgetfulFunctor\colon \CategoryOfObjectsWithSReplacement{\mathcal{D}}{F} \map \mathcal{D}\).
\end{definition}

In fact, if \(F\) is S-dense, then one can construct several (non-canonical) lifts along the forgetful func- \linebreak 
tor~\(\ForgetfulFunctor\colon \CategoryOfObjectsWithSReplacement{\mathcal{D}}{F} \map \mathcal{D}\): Every choice of S-replacements \(R = ((X_Y, q_Y))_{Y \in \Ob \mathcal{D}}\) for \(\mathcal{D}\) along~\(F\) leads to a lift \(F_R := \StructureChoiceFunctor{R} \comp F\colon \mathcal{C} \map \CategoryOfObjectsWithSReplacement{\mathcal{D}}{F}\) as \(\ForgetfulFunctor \comp F_R = \ForgetfulFunctor \comp \StructureChoiceFunctor{R} \comp F = F\) by remark~\ref{rem:forgetful_functor_of_s-replacement_category_and_choices_of_s-replacements}. However, to prove an assertion analogous to proposition~\ref{prop:canonical_lift_and_total_s-replacement_functor}\ref{prop:canonical_lift_and_total_s-replacement_functor:coretraction} below, it seems that one still (at least implicitly) needs the canonical lift~\(\bar F\colon \mathcal{C} \map \CategoryOfObjectsWithSReplacement{\mathcal{D}}{F}\) for the construction of an isotransformation \(\TotalSReplacementFunctor F \comp F_R \map \LocalisationFunctor[\GabrielZismanLocalisation(\mathcal{C})]\), where \linebreak 
\(\TotalSReplacementFunctor F\colon \CategoryOfObjectsWithSReplacement{\mathcal{D}}{F} \map \GabrielZismanLocalisation(\mathcal{C})\) denotes the total S-replacement functor introduced in definition~\ref{def:total_s-replacement_functor} below.

\begin{remark} \label{rem:canonical_lift_along_forgetful_functor_of_s-replacement_category_is_s-dense}
We suppose that \(\mathcal{D}\) has all trivial S-replacements along \(F\). For every object~\((Y, X, q)\) in~\(\CategoryOfObjectsWithSReplacement{\mathcal{D}}{F}\), the pair \((X, (q, (X, 1_{F X}), (X, q)))\) is an S-replacement of~\((Y, X, q)\) along the canonical lift \(\bar F\colon \mathcal{C} \map \CategoryOfObjectsWithSReplacement{\mathcal{D}}{F}\).
\[\begin{tikzpicture}[baseline=(m-1-1.base)]
  \matrix (m) [diagram]{
    (F X, X, 1_{F X}) & (Y, X, q) \\ };
  \path[->, font=\scriptsize]
    (m-1-1) edge[den] node[above] {\(q\)} (m-1-2);
\end{tikzpicture}\]
In particular, the category with denominators \(\CategoryOfObjectsWithSReplacement{\mathcal{D}}{F}\) has enough S{\nbd}replacements along~\(\bar F\).
\end{remark}

\begin{proposition} \label{prop:characterisation_of_s-replacement_axiom}
We suppose that \(\mathcal{D}\) is multiplicative. The following conditions are equivalent.
\begin{enumerate}
\item \label{prop:characterisation_of_s-replacement_axiom:definition} The morphism of categories with denominators \(F\) is S-dense.
\item \label{prop:characterisation_of_s-replacement_axiom:forgetful_functor_is_s-dense} The forgetful functor \(\ForgetfulFunctor\colon \CategoryOfObjectsWithSReplacement{\mathcal{D}}{F} \map \mathcal{D}\) is S-dense.
\item \label{prop:characterisation_of_s-replacement_axiom:forgetful_functor_is_surjective_on_objects} The forgetful functor \(\ForgetfulFunctor\colon \CategoryOfObjectsWithSReplacement{\mathcal{D}}{F} \map \mathcal{D}\) is surjective on the objects.
\end{enumerate}
If \(\mathcal{D}\) is isosaturated, then these conditions are also equivalent to the following conditions.
\begin{enumerate}
\setcounter{enumi}{3}
\item \label{prop:characterisation_of_s-replacement_axiom:forgetful_functor_is_dense} The forgetful functor \(\ForgetfulFunctor\colon \CategoryOfObjectsWithSReplacement{\mathcal{D}}{F} \map \mathcal{D}\) is dense.
\item \label{prop:characterisation_of_s-replacement_axiom:forgetful_functor_is_equivalence} The forgetful functor \(\ForgetfulFunctor\colon \CategoryOfObjectsWithSReplacement{\mathcal{D}}{F} \map \mathcal{D}\) is an equivalence of categories.
\end{enumerate}
\end{proposition}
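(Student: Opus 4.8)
The plan is to run two loops. First I would establish $\text{\ref{prop:characterisation_of_s-replacement_axiom:definition}} \Leftrightarrow \text{\ref{prop:characterisation_of_s-replacement_axiom:forgetful_functor_is_s-dense}} \Leftrightarrow \text{\ref{prop:characterisation_of_s-replacement_axiom:forgetful_functor_is_surjective_on_objects}}$ purely by unwinding the definition of the category of objects with S-replacement and invoking multiplicativity; then, under the isosaturatedness hypothesis, I would close the loop $\text{\ref{prop:characterisation_of_s-replacement_axiom:definition}} \Rightarrow \text{\ref{prop:characterisation_of_s-replacement_axiom:forgetful_functor_is_equivalence}} \Rightarrow \text{\ref{prop:characterisation_of_s-replacement_axiom:forgetful_functor_is_dense}} \Rightarrow \text{\ref{prop:characterisation_of_s-replacement_axiom:definition}}$, where only the last implication actually uses isosaturatedness.

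For \ref{prop:characterisation_of_s-replacement_axiom:definition}~$\Leftrightarrow$~\ref{prop:characterisation_of_s-replacement_axiom:forgetful_functor_is_surjective_on_objects}: by remark~\ref{rem:description_of_category_of_objects_with_s-replacement}, an object of \(\CategoryOfObjectsWithSReplacement{\mathcal{D}}{F}\) is a triple \((Y, X, q)\) with \((X, q)\) an S-replacement of \(Y\) along \(F\), and \(\ForgetfulFunctor\) sends it to \(Y\). Hence \(\ForgetfulFunctor\) hits an object \(Y\) of \(\mathcal{D}\) if and only if \(Y\) admits an S-replacement along \(F\), so surjectivity of \(\ForgetfulFunctor\) on objects is literally the statement that \(F\) is S-dense; no hypothesis on \(\mathcal{D}\) is needed here. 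For \ref{prop:characterisation_of_s-replacement_axiom:definition}~$\Leftrightarrow$~\ref{prop:characterisation_of_s-replacement_axiom:forgetful_functor_is_s-dense} I would argue directly. If \(F\) is S-dense, then since \(\mathcal{D}\) is multiplicative it has all trivial S-replacements along \(F\) (remark~\ref{rem:multiplicativity_implies_having_all_trivial_s-replacements}), so for an S-replacement \((X, q)\) of \(Y\) along \(F\) the triple \(\bar F X = (F X, X, 1_{F X})\) is an object of \(\CategoryOfObjectsWithSReplacement{\mathcal{D}}{F}\) and \(q\) is a denominator \(\ForgetfulFunctor(F X, X, 1_{F X}) = F X \map Y\) in \(\mathcal{D}\); thus \(((F X, X, 1_{F X}), q)\) is an S-replacement of \(Y\) along \(\ForgetfulFunctor\) and \(\ForgetfulFunctor\) is S-dense. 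Conversely, an S-replacement \(((\tilde Y, X, q), b)\) of \(Y\) along \(\ForgetfulFunctor\) consists of a denominator \(q\colon F X \map \tilde Y\) in \(\mathcal{D}\) and a denominator \(b\colon \tilde Y \map Y\) in \(\mathcal{D}\), so \(q b\colon F X \map Y\) is a denominator by multiplicativity and \((X, q b)\) is an S-replacement of \(Y\) along \(F\); thus \(F\) is S-dense. (Both directions can also be packaged by observing that \(\CategoryOfObjectsWithSReplacement{\mathcal{D}}{F}\) has enough S-replacements along \(\bar F\), see remark~\ref{rem:canonical_lift_along_forgetful_functor_of_s-replacement_category_is_s-dense}, and applying the two parts of remark~\ref{rem:having_enough_s-replacements_and_composition_of_functors} to the factorisation \(F = \ForgetfulFunctor \comp \bar F\).)

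Now suppose \(\mathcal{D}\) is isosaturated. For \ref{prop:characterisation_of_s-replacement_axiom:definition}~$\Rightarrow$~\ref{prop:characterisation_of_s-replacement_axiom:forgetful_functor_is_equivalence}: by remark~\ref{rem:existence_of_choice_of_s-replacements}, S-density of \(F\) yields a choice of S-replacements \(R\) for \(\mathcal{D}\) along \(F\), and then by remark~\ref{rem:forgetful_functor_of_s-replacement_category_and_choices_of_s-replacements} the forgetful functor \(\ForgetfulFunctor\) and the structure choice functor \(\StructureChoiceFunctor{R}\) are mutually isomorphism inverse equivalences of categories, so \(\ForgetfulFunctor\) is an equivalence; note this uses neither multiplicativity nor isosaturatedness. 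The implication \ref{prop:characterisation_of_s-replacement_axiom:forgetful_functor_is_equivalence}~$\Rightarrow$~\ref{prop:characterisation_of_s-replacement_axiom:forgetful_functor_is_dense} is trivial, as every equivalence is dense. Finally \ref{prop:characterisation_of_s-replacement_axiom:forgetful_functor_is_dense}~$\Rightarrow$~\ref{prop:characterisation_of_s-replacement_axiom:definition} is where isosaturatedness enters: given \(Y\) in \(\mathcal{D}\), density of \(\ForgetfulFunctor\) provides an object \((\tilde Y, X, q)\) of \(\CategoryOfObjectsWithSReplacement{\mathcal{D}}{F}\) together with an isomorphism \(i\colon \tilde Y = \ForgetfulFunctor(\tilde Y, X, q) \map Y\) in \(\mathcal{D}\); by isosaturatedness \(i\) is a denominator in \(\mathcal{D}\), and since \(q\colon F X \map \tilde Y\) is a denominator, multiplicativity makes \(q i\colon F X \map Y\) a denominator, so \((X, q i)\) is an S-replacement of \(Y\) along \(F\) and \(F\) is S-dense.

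I do not anticipate a genuine obstacle; the content is bookkeeping. The only point requiring care is that density of \(\ForgetfulFunctor\) produces merely an isomorphism \(\tilde Y \isomorphic Y\), which cannot be promoted to a denominator \(F X \map Y\) without knowing that isomorphisms are denominators — precisely the reason conditions~\ref{prop:characterisation_of_s-replacement_axiom:forgetful_functor_is_dense} and~\ref{prop:characterisation_of_s-replacement_axiom:forgetful_functor_is_equivalence} are listed only under the isosaturatedness assumption.
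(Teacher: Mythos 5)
Your proposal is correct and follows essentially the same route as the paper: the first three conditions are identified by unwinding the definition of \(\CategoryOfObjectsWithSReplacement{\mathcal{D}}{F}\) and using the factorisation \(F = \ForgetfulFunctor \comp \bar F\) together with multiplicativity (your direct composition arguments are just the unpackaged form of remarks~\ref{rem:canonical_lift_along_forgetful_functor_of_s-replacement_category_is_s-dense} and~\ref{rem:having_enough_s-replacements_and_composition_of_functors}, which you also cite), and the isosaturated loop runs through the choice-of-S-replacements equivalence of remark~\ref{rem:forgetful_functor_of_s-replacement_category_and_choices_of_s-replacements} exactly as in the paper. The only cosmetic deviation is that you close the second loop with \ref{prop:characterisation_of_s-replacement_axiom:forgetful_functor_is_dense}~\(\Rightarrow\)~\ref{prop:characterisation_of_s-replacement_axiom:definition} (composing \(q\) with the promoted isomorphism), whereas the paper shows \ref{prop:characterisation_of_s-replacement_axiom:forgetful_functor_is_dense}~\(\Rightarrow\)~\ref{prop:characterisation_of_s-replacement_axiom:forgetful_functor_is_s-dense} and then reuses the first equivalence; both are fine.
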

\begin{proof}
First, we show that condition~\ref{prop:characterisation_of_s-replacement_axiom:definition}, condition~\ref{prop:characterisation_of_s-replacement_axiom:forgetful_functor_is_s-dense} and condition~\ref{prop:characterisation_of_s-replacement_axiom:forgetful_functor_is_surjective_on_objects} are equivalent.

By remark~\ref{rem:well-definedness_of_canonical_lift_along_forgetful_functor_of_s-replacement_category}\ref{rem:well-definedness_of_canonical_lift_along_forgetful_functor_of_s-replacement_category:factorisation}, we have \(F = \ForgetfulFunctor \comp \bar F\), where \(\bar F\colon \mathcal{C} \map \CategoryOfObjectsWithSReplacement{\mathcal{D}}{F}\) denotes the canonical lift of \(F\) along~\(\ForgetfulFunctor\). The canonical lift \(\bar F\) is S-dense by remark~\ref{rem:canonical_lift_along_forgetful_functor_of_s-replacement_category_is_s-dense}. So as \(\mathcal{D}\) is multiplicative, remark~\ref{rem:having_enough_s-replacements_and_composition_of_functors} implies that \(F\) is S-dense if and only if \(\ForgetfulFunctor\) is S-dense, that is, condition~\ref{prop:characterisation_of_s-replacement_axiom:definition} and condition~\ref{prop:characterisation_of_s-replacement_axiom:forgetful_functor_is_s-dense} are equivalent.

Moreover, for every object \(Y\) in \(\mathcal{D}\) there exists an S-replacement \((X, q)\) of \(Y\) along \(F\) if and only if there exists an object with S-replacement \((Y, X, q)\) in \(\mathcal{D}\) along \(F\). Thus \(F\) is S-dense if and only if \(\ForgetfulFunctor\) is surjective on the objects, that is, condition~\ref{prop:characterisation_of_s-replacement_axiom:definition} and condition~\ref{prop:characterisation_of_s-replacement_axiom:forgetful_functor_is_surjective_on_objects} are equivalent.

Thus condition~\ref{prop:characterisation_of_s-replacement_axiom:definition}, condition~\ref{prop:characterisation_of_s-replacement_axiom:forgetful_functor_is_s-dense} and condition~\ref{prop:characterisation_of_s-replacement_axiom:forgetful_functor_is_surjective_on_objects} are equivalent.

Second, we suppose that \(\mathcal{D}\) is isosaturated and show that this premise implies that all five conditions are equivalent.

We suppose that condition~\ref{prop:characterisation_of_s-replacement_axiom:definition} holds, that is, we suppose that \(F\) is S-dense. Then there exists a choice of S{\nbd}replacements \(R\) for \(\mathcal{D}\) along \(F\), and \(\ForgetfulFunctor\) and \(\StructureChoiceFunctor{R}\) are mutually isomorphism inverse equivalences by remark~\ref{rem:forgetful_functor_of_s-replacement_category_and_choices_of_s-replacements}. Thus condition~\ref{prop:characterisation_of_s-replacement_axiom:forgetful_functor_is_equivalence} holds.

If condition~\ref{prop:characterisation_of_s-replacement_axiom:forgetful_functor_is_equivalence} holds, that is, if \(\ForgetfulFunctor\) is an equivalence, then in particular \(\ForgetfulFunctor\) is dense by the dense-full-faithful criterion, that is, condition~\ref{prop:characterisation_of_s-replacement_axiom:forgetful_functor_is_dense} holds.

Finally, we suppose~that condition~\ref{prop:characterisation_of_s-replacement_axiom:forgetful_functor_is_dense} holds, that is, we suppose that \(\ForgetfulFunctor\colon \CategoryOfObjectsWithSReplacement{\mathcal{D}}{F} \map \mathcal{D}\) is dense. Moreover, we suppose given an object~\(Y\) in~\(\mathcal{D}\). As~\(\ForgetfulFunctor\) is dense, there exists an object \((Y', X, q)\) in \(\CategoryOfObjectsWithSReplacement{\mathcal{D}}{F}\) and an isomorphism~\(g\colon \ForgetfulFunctor (Y', X, q) \map Y\) in \(\mathcal{D}\). The isosaturatedness of~\(\mathcal{D}\) implies that \(g\colon Y' \map Y\) is a denominator in~\(\mathcal{D}\), and so \(((Y', X, q), g)\) is an S-replacement of \(Y\) along \(\ForgetfulFunctor\). Thus \(\ForgetfulFunctor\colon \CategoryOfObjectsWithSReplacement{\mathcal{D}}{F} \map \mathcal{D}\) is S-dense, that is, condition~\ref{prop:characterisation_of_s-replacement_axiom:forgetful_functor_is_s-dense} holds.

Thus condition~\ref{prop:characterisation_of_s-replacement_axiom:definition}, condition~\ref{prop:characterisation_of_s-replacement_axiom:forgetful_functor_is_s-dense}, condition~\ref{prop:characterisation_of_s-replacement_axiom:forgetful_functor_is_surjective_on_objects}, condition~\ref{prop:characterisation_of_s-replacement_axiom:forgetful_functor_is_dense} and condition~\ref{prop:characterisation_of_s-replacement_axiom:forgetful_functor_is_equivalence} are equivalent.
\end{proof}

\subsection*{The total S-replacement functor} \label{sec:s-replacement_functors_and_the_s-approximation_theorem:the_total_s-replacement_functor}

Next, we construct a functor that leads to an isomorphism inverse of the canonical lift.

\begin{proposition} \label{prop:well-definedness_of_total_s-replacement_functor}
We suppose that \(F\) is S-full and S-faithful. Then we have a functor
\[\TotalSReplacementFunctor F\colon \CategoryOfObjectsWithSReplacement{\mathcal{D}}{F} \map \GabrielZismanLocalisation(\mathcal{C}),\]
given on the objects by
\[(\TotalSReplacementFunctor F)_{(X, q)} Y = X\]
for \((Y, X, q) \in \Ob \CategoryOfObjectsWithSReplacement{\mathcal{D}}{F}\), and on the morphisms as follows. Given a morphism \(g\colon (Y, X, q) \map (Y', X', q')\) in~\(\CategoryOfObjectsWithSReplacement{\mathcal{D}}{F}\), then \((\TotalSReplacementFunctor F)_{(X, q), (X', q')} g\colon X \map X'\) is the unique morphism in \(\GabrielZismanLocalisation(\mathcal{C})\) with
\[\LocalisationFunctor(q) \, \LocalisationFunctor(g) = (\GabrielZismanLocalisation(F) (\TotalSReplacementFunctor F)_{(X, q), (X', q')} g) \, \LocalisationFunctor(q')\]
in \(\GabrielZismanLocalisation(\mathcal{D})\).
\[\begin{tikzpicture}[baseline=(m-2-1.base)]
  \matrix (m) [diagram]{
    F X &[7.5em] F X' \\
    Y & Y' \\ };
  \path[->, font=\scriptsize]
    (m-1-1) edge node[above] {\(\GabrielZismanLocalisation(F) (\TotalSReplacementFunctor F)_{(X, q), (X', q')} g\)} (m-1-2)
            edge node[left] {\(\LocalisationFunctor(q)\)} node[sloped, above] {\(\isomorphic\)} (m-2-1)
    (m-1-2) edge node[right] {\(\LocalisationFunctor(q')\)} node[sloped, below] {\(\isomorphic\)} (m-2-2)
    (m-2-1) edge node[above] {\(\LocalisationFunctor(g)\)} (m-2-2);
\end{tikzpicture}\]
\end{proposition}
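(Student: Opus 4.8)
The plan is to build $\TotalSReplacementFunctor F$ on objects by $(\TotalSReplacementFunctor F)_{(X, q)} Y := X$ — which is legitimate since $\Ob \GabrielZismanLocalisation(\mathcal{C}) = \Ob \mathcal{C}$ — and on morphisms via S-$2$-arrows. Given a morphism $g\colon (Y, X, q) \map (Y', X', q')$ in $\CategoryOfObjectsWithSReplacement{\mathcal{D}}{F}$, its underlying morphism in $\mathcal{D}$ is a morphism $g\colon Y \map Y'$, so that $(q g, q')\colon \Stwoarrow{F X}{Y'}{F X'}$ is an S-$2$-arrow in $\mathcal{D}$ whose outer vertices are $F X$ and $F X'$ for objects $X, X'$ of $\mathcal{C}$; here $q g\colon F X \map Y'$ is the composite of the denominator $q\colon F X \map Y$ with $g$, and $q'\colon F X' \map Y'$ is a denominator. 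First I would apply S-fullness of $F$ to this S-$2$-arrow to obtain a morphism $\varphi\colon X \map X'$ in $\GabrielZismanLocalisation(\mathcal{C})$ with $\LocalisationFunctor(q g) \, \LocalisationFunctor(q')^{- 1} = \GabrielZismanLocalisation(F) \varphi$ in $\GabrielZismanLocalisation(\mathcal{D})$, and then S-faithfulness of $F$ to see that $\varphi$ is the unique such morphism. Multiplying on the right by the isomorphism $\LocalisationFunctor(q')$ and using $\LocalisationFunctor(q g) = \LocalisationFunctor(q) \, \LocalisationFunctor(g)$ converts this into $\LocalisationFunctor(q) \, \LocalisationFunctor(g) = (\GabrielZismanLocalisation(F) \varphi) \, \LocalisationFunctor(q')$, and since $\LocalisationFunctor(q')$ is invertible, uniqueness of $\varphi$ is equivalent to uniqueness of a morphism satisfying this last equation. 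So I set $(\TotalSReplacementFunctor F)_{(X, q), (X', q')} g := \varphi$, which is exactly the prescription in the statement.

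It then remains to check that $\TotalSReplacementFunctor F$ respects identities and composition, where the uniqueness clause does all the work. For an object $(Y, X, q)$ the identity $1_{(Y, X, q)}$ has underlying morphism $1_Y$, and $1_X$ satisfies $\LocalisationFunctor(q) \, \LocalisationFunctor(1_Y) = \LocalisationFunctor(q) = (\GabrielZismanLocalisation(F) 1_X) \, \LocalisationFunctor(q)$ because $\GabrielZismanLocalisation(F) 1_X = 1_{F X}$; hence $(\TotalSReplacementFunctor F) 1_{(Y, X, q)} = 1_X$ by uniqueness. For composable morphisms $g\colon (Y, X, q) \map (Y', X', q')$ and $g'\colon (Y', X', q') \map (Y'', X'', q'')$, writing $\varphi := (\TotalSReplacementFunctor F) g$ and $\varphi' := (\TotalSReplacementFunctor F) g'$ and chaining the two defining equations gives
\begin{align*}
\LocalisationFunctor(q) \, \LocalisationFunctor(g g') & = \LocalisationFunctor(q) \, \LocalisationFunctor(g) \, \LocalisationFunctor(g') = (\GabrielZismanLocalisation(F) \varphi) \, \LocalisationFunctor(q') \, \LocalisationFunctor(g') \\
& = (\GabrielZismanLocalisation(F) \varphi) (\GabrielZismanLocalisation(F) \varphi') \, \LocalisationFunctor(q'') = (\GabrielZismanLocalisation(F)(\varphi \varphi')) \, \LocalisationFunctor(q'')
\end{align*}
in $\GabrielZismanLocalisation(\mathcal{D})$, so $\varphi \varphi'$ satisfies the equation defining $(\TotalSReplacementFunctor F)(g g')$; by uniqueness $(\TotalSReplacementFunctor F)(g g') = \varphi \varphi'$, which proves functoriality.

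The only genuinely delicate point is the bookkeeping at the very start: one must be careful that $(q g, q')$ has precisely the shape demanded by the definitions of S-fullness and S-faithfulness (outer vertices $F X$, $F X'$ with $X, X' \in \Ob \mathcal{C}$, and $q'$ a denominator in $\mathcal{D}$), all of which is immediate from the description of $\CategoryOfObjectsWithSReplacement{\mathcal{D}}{F}$ and the fact that its morphisms and denominators are detected by the forgetful functor, and that the equation in the statement is exactly the image of the equation in the definitions of S-fullness/S-faithfulness under right multiplication by $\LocalisationFunctor(q')$. Note that multiplicativity of $\mathcal{D}$ is not needed for this proposition; it enters only later, when one shows that $\TotalSReplacementFunctor F$ maps denominators to isomorphisms.
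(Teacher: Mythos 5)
Your proposal is correct and follows essentially the same route as the paper's proof: apply S-fullness and S-faithfulness to the S-\(2\)-arrow \((q g, q')\colon \Stwoarrow{F X}{Y'}{F X'}\) to get the unique \(\varphi\) with \(\LocalisationFunctor(q g)\,\LocalisationFunctor(q')^{-1} = \GabrielZismanLocalisation(F)\varphi\) (equivalently \(\LocalisationFunctor(q)\,\LocalisationFunctor(g) = (\GabrielZismanLocalisation(F)\varphi)\,\LocalisationFunctor(q')\)), then verify identities and compositions by the same uniqueness argument. Your closing observations — that \(q'\) being a denominator is all that is needed for the S-\(2\)-arrow shape and that multiplicativity of \(\mathcal{D}\) is not used here — also match the paper.
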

\begin{proof}
We define a map
\[\text{\(\bar Q_0\colon \Ob \CategoryOfObjectsWithSReplacement{\mathcal{D}}{F} \map \Ob \GabrielZismanLocalisation(\mathcal{C})\), \((Y, X, q) \mapsto X\).}\]
Moreover, as \(F\colon \mathcal{C} \map \mathcal{D}\) is S-full and S-faithful, for all \((Y, X, q), (Y', X', q') \in \Ob \CategoryOfObjectsWithSReplacement{\mathcal{D}}{F}\) we obtain a well-defined map 
\[\bar Q_{(Y, X, q), (Y', X', q')}\colon {_{\CategoryOfObjectsWithSReplacement{\mathcal{D}}{F}}}((Y, X, q), (Y', X', q')) \map {_{\GabrielZismanLocalisation(\mathcal{C})}}(X, X'),\]
where \(\bar Q_{(Y, X, q), (Y', X', q')} g \in {_{\GabrielZismanLocalisation(\mathcal{C})}}(X, X')\) for \(g \in {_{\CategoryOfObjectsWithSReplacement{\mathcal{D}}{F}}}((Y, X, q), (Y', X', q'))\) is the unique element with
\[\LocalisationFunctor(q g) \, \LocalisationFunctor(q')^{- 1} = \GabrielZismanLocalisation(F) \bar Q_{(Y, X, q), (Y', X', q')} g,\]
that is, with
\[\LocalisationFunctor(q) \, \LocalisationFunctor(g) = (\GabrielZismanLocalisation(F) \bar Q_{(Y, X, q), (Y', X', q')} g) \, \LocalisationFunctor(q'),\]
in \(\GabrielZismanLocalisation(\mathcal{D})\).

Given morphisms \(g\colon (Y, X, q) \map (Y', X', q')\) and \(g'\colon (Y', X', q') \map (Y'', X'', q'')\) in \(\CategoryOfObjectsWithSReplacement{\mathcal{D}}{F}\), we have
\begin{align*}
\LocalisationFunctor(q) \, \LocalisationFunctor(g g') & = \LocalisationFunctor(q) \, \LocalisationFunctor(g) \, \LocalisationFunctor(g') = (\GabrielZismanLocalisation(F) \bar Q_{(Y, X, q), (Y', X', q')} g) \, \LocalisationFunctor(q') \, \LocalisationFunctor(g') \\
& = (\GabrielZismanLocalisation(F) \bar Q_{(Y, X, q), (Y', X', q')} g) \, (\GabrielZismanLocalisation(F) \bar Q_{(Y', X', q'), (Y'', X'', q'')} g') \, \LocalisationFunctor(q'') \\
& = \GabrielZismanLocalisation(F)((\bar Q_{(Y, X, q), (Y', X', q')} g) \, (\bar Q_{(Y', X', q'), (Y'', X'', q'')} g')) \, \LocalisationFunctor(q'')
\end{align*}
in \(\GabrielZismanLocalisation(\mathcal{D})\) and therefore \(\bar Q_{(Y, X, q), (Y'', X'', q'')}(g g') = (\bar Q_{(Y, X, q), (Y', X', q')} g) (\bar Q_{(Y', X', q'), (Y'', X'', q'')} g')\) in \(\GabrielZismanLocalisation(\mathcal{C})\). Moreover, for \((Y, X, q) \in \Ob \CategoryOfObjectsWithSReplacement{\mathcal{D}}{F}\) we have
\[\LocalisationFunctor(q) \, \LocalisationFunctor(1_Y) = 1_{F X} \, \LocalisationFunctor(q) = (\GabrielZismanLocalisation(F) 1_X) \, \LocalisationFunctor(q)\]
in \(\GabrielZismanLocalisation(\mathcal{D})\) and therefore \(\bar Q_{(Y, X, q), (Y, X, q)}(1_Y) = 1_X = 1_{\bar Q_0 (Y, X, q)}\) in \(\GabrielZismanLocalisation(\mathcal{C})\).

Thus we have a functor \(\TotalSReplacementFunctor F\colon \CategoryOfObjectsWithSReplacement{\mathcal{D}}{F} \map \GabrielZismanLocalisation(\mathcal{C})\) given by \(\Ob \TotalSReplacementFunctor F = \bar Q_0\) and by \((\TotalSReplacementFunctor F)_{(X, q), (X', q')} g = \bar Q_{(Y, X, q), (Y', X', q')} g\) for every morphism \(g\colon (Y, X, q) \map (Y', X', q')\) in~\(\CategoryOfObjectsWithSReplacement{\mathcal{D}}{F}\).
\end{proof}

\begin{definition}[total S-replacement functor] \label{def:total_s-replacement_functor}
We suppose that \(F\) is S-full and S-faithful. The functor~\(\TotalSReplacementFunctor F\colon \CategoryOfObjectsWithSReplacement{\mathcal{D}}{F} \map \GabrielZismanLocalisation(\mathcal{C})\) from proposition~\ref{prop:well-definedness_of_total_s-replacement_functor} is called the \newnotion{total S{\nbd}replacement functor} along \(F\).
\end{definition}

\begin{remark} \label{rem:total_s-replacement_functor_and_shortening}
We suppose that \(\mathcal{D}\) is multiplicative and that \(F\) is S-full and S-faithful. Moreover, we suppose given a morphism \(g\colon (Y, X, q) \map (Y', X', q')\) in \(\CategoryOfObjectsWithSReplacement{\mathcal{D}}{F}\), denominators \(e\colon Y \map \tilde Y\) and \(e'\colon Y' \map \tilde Y'\) in \(\mathcal{D}\) and a morphism \(\tilde g\colon \tilde Y \map \tilde Y'\) in \(\mathcal{D}\) such that \(g e' = e \tilde g\) in \(\mathcal{D}\).
\[\begin{tikzpicture}[baseline=(m-3-1.base)]
  \matrix (m) [diagram]{
    F X & F X' \\
    Y & Y' \\
    \tilde Y & \tilde Y' \\ };
  \path[->, font=\scriptsize]
    (m-1-1) edge[den] node[left] {\(q\)} (m-2-1)
    (m-1-2) edge[den] node[right] {\(q'\)} (m-2-2)
    (m-2-1) edge node[above] {\(g\)} (m-2-2)
            edge[den] node[left] {\(e\)} ([yshift=2pt]m-3-1.north)
    (m-2-2) edge[den] node[right] {\(e'\)} ([yshift=2pt]m-3-2.north)
    (m-3-1) edge node[above] {\(\tilde g\)} (m-3-2);
\end{tikzpicture}\]
Then we have
\[(\TotalSReplacementFunctor F)_{(X, q e), (X', q' e')} \tilde g = (\TotalSReplacementFunctor F)_{(X, q), (X', q')} g\]
in \(\GabrielZismanLocalisation(\mathcal{C})\).
\end{remark}
\begin{proof}
The pair \((X, q e)\) is an S-replacement of \(\tilde Y\) and the pair \((X', q' e')\) is an S-replacement of \(\tilde Y'\) by the multiplicativity of \(\mathcal{D}\). Thus
\begin{align*}
\LocalisationFunctor(q e) \, \LocalisationFunctor(\tilde g) & = \LocalisationFunctor(q) \, \LocalisationFunctor(e) \, \LocalisationFunctor(\tilde g) = \LocalisationFunctor(q) \, \LocalisationFunctor(g) \, \LocalisationFunctor(e') = (\GabrielZismanLocalisation(F) (\TotalSReplacementFunctor F)_{(X, q), (X', q')} g) \, \LocalisationFunctor(q') \, \LocalisationFunctor(e') \\
& = (\GabrielZismanLocalisation(F) (\TotalSReplacementFunctor F)_{(X, q), (X', q')} g) \, \LocalisationFunctor(q' e')
\end{align*}
implies that
\[(\TotalSReplacementFunctor F)_{(X, q e), (X', q' e')} \tilde g = (\TotalSReplacementFunctor F)_{(X, q), (X', q')} g\]
in \(\GabrielZismanLocalisation(\mathcal{C})\).
\[\begin{tikzpicture}[baseline=(m-3-1.base)]
  \matrix (m) [diagram]{
    F X &[7.5em] F X' \\
    Y & Y' \\
    \tilde Y & \tilde Y' \\ };
  \path[->, font=\scriptsize]
    (m-1-1) edge node[above] {\(\GabrielZismanLocalisation(F) (\TotalSReplacementFunctor F)_{(X, q), (X', q')} g\)} (m-1-2)
            edge node[left] {\(\LocalisationFunctor(q)\)} node[sloped, above] {\(\isomorphic\)} (m-2-1)
    (m-1-2) edge node[right] {\(\LocalisationFunctor(q')\)} node[sloped, below] {\(\isomorphic\)} (m-2-2)
    (m-2-1) edge node[above] {\(\LocalisationFunctor(g)\)} (m-2-2)
            edge node[left] {\(\LocalisationFunctor(e)\)} node[sloped, above] {\(\isomorphic\)} ([yshift=2pt]m-3-1.north)
    (m-2-2) edge node[right] {\(\LocalisationFunctor(e')\)} node[sloped, below] {\(\isomorphic\)} ([yshift=2pt]m-3-2.north)
    (m-3-1) edge node[above] {\(\LocalisationFunctor(\tilde g)\)} (m-3-2);
\end{tikzpicture} \qedhere\]
\end{proof}

\begin{corollary} \label{cor:values_of_denominators_under_total_s-replacement_functor}
We suppose that \(\mathcal{D}\) is multiplicative and that \(F\) is S-full and S-faithful. Moreover, we suppose given a denominator~\(e\colon (Y, X, q) \map (Y', X', q')\) in~\(\CategoryOfObjectsWithSReplacement{\mathcal{D}}{F}\). Then we have
\[(\TotalSReplacementFunctor F)_{(X, q), (X', q')} e = (\TotalSReplacementFunctor F)_{(X, q e), (X', q')} 1_{Y'}\]
in \(\GabrielZismanLocalisation(\mathcal{C})\).
\end{corollary}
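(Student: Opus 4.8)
The plan is to deduce this directly from remark~\ref{rem:total_s-replacement_functor_and_shortening}, whose hypotheses --- multiplicativity of \(\mathcal{D}\), S-fullness and S-faithfulness of \(F\) --- are precisely those of the corollary.

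First I would unwind what the statement even asserts. Since \(e\colon (Y, X, q) \map (Y', X', q')\) is a denominator in \(\CategoryOfObjectsWithSReplacement{\mathcal{D}}{F}\), its underlying morphism \(\ForgetfulFunctor e\colon Y \map Y'\) is a denominator in \(\mathcal{D}\) by the definition of the denominators in the category of objects with S-replacement; I keep calling it \(e\). By multiplicativity of \(\mathcal{D}\), the composite \(q e\colon F X \map Y'\) is then a denominator in \(\mathcal{D}\), so \((X, q e)\) is an S-replacement of \(Y'\) along \(F\) and \((Y', X, q e)\) is a bona fide object with S-replacement; in particular the right-hand side of the asserted equation makes sense, with \(1_{Y'}\colon (Y', X, q e) \map (Y', X', q')\) a morphism in \(\CategoryOfObjectsWithSReplacement{\mathcal{D}}{F}\).

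Next I would apply remark~\ref{rem:total_s-replacement_functor_and_shortening} with the morphism \(g := e\colon (Y, X, q) \map (Y', X', q')\), with the denominators \(e\colon Y \map Y'\) (so that \(\tilde Y := Y'\)) and \(1_{Y'}\colon Y' \map Y'\) (so that \(\tilde Y' := Y'\)) in \(\mathcal{D}\), and with the morphism \(\tilde g := 1_{Y'}\colon Y' \map Y'\) in \(\mathcal{D}\). The compatibility condition \(g \, 1_{Y'} = e \, 1_{Y'}\) required there holds trivially, both sides being the morphism \(e\colon Y \map Y'\). Remark~\ref{rem:total_s-replacement_functor_and_shortening} then yields
\[(\TotalSReplacementFunctor F)_{(X, q e), (X', q' 1_{Y'})} 1_{Y'} = (\TotalSReplacementFunctor F)_{(X, q), (X', q')} e\]
in \(\GabrielZismanLocalisation(\mathcal{C})\), and since \(q' 1_{Y'} = q'\) this is exactly the claimed identity.

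I do not expect any genuine obstacle; the only thing to be careful about is the bookkeeping between the two roles of ``\(e\)'' (a denominator in \(\CategoryOfObjectsWithSReplacement{\mathcal{D}}{F}\) versus its underlying denominator in \(\mathcal{D}\)) and checking that the substitution into remark~\ref{rem:total_s-replacement_functor_and_shortening} is consistent. Should one prefer to bypass that remark, the same conclusion follows straight from the defining property of \(\TotalSReplacementFunctor F\) in proposition~\ref{prop:well-definedness_of_total_s-replacement_functor}: both morphisms in question are characterised as the unique \(\varphi\) in \(\GabrielZismanLocalisation(\mathcal{C})\) with \(\LocalisationFunctor(q) \, \LocalisationFunctor(e) = (\GabrielZismanLocalisation(F) \varphi) \, \LocalisationFunctor(q')\) --- using \(\LocalisationFunctor(q e) \, \LocalisationFunctor(1_{Y'}) = \LocalisationFunctor(q) \, \LocalisationFunctor(e)\) to rewrite the defining equation of the second one --- so S-faithfulness forces them to coincide.
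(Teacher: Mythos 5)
Your proof is correct and is essentially the paper's own argument: the paper proves this corollary precisely by applying remark~\ref{rem:total_s-replacement_functor_and_shortening} with \(g = e\), \(\tilde Y = \tilde Y' = Y'\), \(e' = \tilde g = 1_{Y'}\) (the accompanying diagram in the paper encodes exactly this substitution). Your parenthetical alternative via the uniqueness in proposition~\ref{prop:well-definedness_of_total_s-replacement_functor} is also fine, but the main route matches the paper.
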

\begin{proof}
This follows from remark~\ref{rem:total_s-replacement_functor_and_shortening}.
\[\begin{tikzpicture}[baseline=(m-3-1.base)]
  \matrix (m) [diagram]{
    F X & F X' \\
    Y & Y' \\
    Y' & Y' \\ };
  \path[->, font=\scriptsize]
    (m-1-1) edge[den] node[left] {\(q\)} (m-2-1)
    (m-1-2) edge[den] node[right] {\(q'\)} (m-2-2)
    (m-2-1) edge[den] node[above] {\(e\)} (m-2-2)
            edge[den] node[left] {\(e\)} ([yshift=2pt]m-3-1.north)
    (m-2-2) edge[den] node[right] {\(1_{Y'}\)} ([yshift=2pt]m-3-2.north)
    (m-3-1) edge[den] node[above] {\(1_{Y'}\)} (m-3-2);
\end{tikzpicture} \qedhere\]
\end{proof}

\begin{corollary} \label{cor:total_s-replacement_functor_maps_denominators_to_isomorphisms}
We suppose that \(\mathcal{D}\) is multiplicative and that \(F\) is S-full and S-faithful. The total S{\nbd}replacement functor~\(\TotalSReplacementFunctor F\colon \CategoryOfObjectsWithSReplacement{\mathcal{D}}{F} \map \GabrielZismanLocalisation(\mathcal{C})\) maps denominators in \(\CategoryOfObjectsWithSReplacement{\mathcal{D}}{F}\) to isomorphisms in~\(\GabrielZismanLocalisation(\mathcal{C})\).
\end{corollary}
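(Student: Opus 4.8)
The plan is to reduce the statement to the elementary fact that functors preserve isomorphisms, using corollary~\ref{cor:values_of_denominators_under_total_s-replacement_functor} to move into a situation where an isomorphism is visibly present.

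First I would fix a denominator \(e\colon (Y, X, q) \map (Y', X', q')\) in \(\CategoryOfObjectsWithSReplacement{\mathcal{D}}{F}\); by definition of the denominators in \(\CategoryOfObjectsWithSReplacement{\mathcal{D}}{F}\), the underlying morphism \(e\colon Y \map Y'\) is a denominator in \(\mathcal{D}\), and since \(\mathcal{D}\) is multiplicative the composite \(q e\colon F X \map Y'\) is again a denominator, so \((X, q e)\) is an S-replacement of \(Y'\) and \((Y', X, q e) \in \Ob \CategoryOfObjectsWithSReplacement{\mathcal{D}}{F}\). By corollary~\ref{cor:values_of_denominators_under_total_s-replacement_functor} we then have
\[(\TotalSReplacementFunctor F)_{(X, q), (X', q')} e = (\TotalSReplacementFunctor F)_{(X, q e), (X', q')} 1_{Y'}\]
in \(\GabrielZismanLocalisation(\mathcal{C})\), where on the right \(1_{Y'}\) is regarded as the morphism \((Y', X, q e) \map (Y', X', q')\) in \(\CategoryOfObjectsWithSReplacement{\mathcal{D}}{F}\) with underlying morphism \(1_{Y'}\colon Y' \map Y'\) in \(\mathcal{D}\).

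Next I would observe that, since the hom-sets of \(\CategoryOfObjectsWithSReplacement{\mathcal{D}}{F}\) are those of \(\mathcal{D}\) and \(1_{Y'}\) is an isomorphism in \(\mathcal{D}\), the morphism \(1_{Y'}\colon (Y', X, q e) \map (Y', X', q')\) is an isomorphism in \(\CategoryOfObjectsWithSReplacement{\mathcal{D}}{F}\), with inverse the morphism \(1_{Y'}\colon (Y', X', q') \map (Y', X, q e)\) (the composite in either order has underlying morphism \(1_{Y'} \, 1_{Y'} = 1_{Y'}\), that is, an identity of \(\CategoryOfObjectsWithSReplacement{\mathcal{D}}{F}\)). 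As \(\TotalSReplacementFunctor F\) is a functor by proposition~\ref{prop:well-definedness_of_total_s-replacement_functor}, it preserves this isomorphism, so \((\TotalSReplacementFunctor F)_{(X, q e), (X', q')} 1_{Y'}\) is an isomorphism in \(\GabrielZismanLocalisation(\mathcal{C})\), with inverse \((\TotalSReplacementFunctor F)_{(X', q'), (X, q e)} 1_{Y'}\). Combining this with the displayed identity shows that \((\TotalSReplacementFunctor F)_{(X, q), (X', q')} e\) is an isomorphism in \(\GabrielZismanLocalisation(\mathcal{C})\), and since an arbitrary denominator of \(\CategoryOfObjectsWithSReplacement{\mathcal{D}}{F}\) is of this form, this proves the claim.

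I do not expect a genuine obstacle here: the only point requiring care is that \(\GabrielZismanLocalisation(F)\) is not yet known to be full or faithful, so one cannot conclude directly from the defining relation \(\LocalisationFunctor(q)\,\LocalisationFunctor(e) = (\GabrielZismanLocalisation(F)(\TotalSReplacementFunctor F)_{(X, q), (X', q')} e)\,\LocalisationFunctor(q')\) (whose left-hand side is invertible) that \((\TotalSReplacementFunctor F)_{(X, q), (X', q')} e\) is invertible. The device of ``shortening \(q\) along \(e\)'' provided by corollary~\ref{cor:values_of_denominators_under_total_s-replacement_functor} circumvents this by exhibiting the image of \(e\) as the image of an honest isomorphism of \(\CategoryOfObjectsWithSReplacement{\mathcal{D}}{F}\). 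Alternatively one could avoid speaking of ``functors preserve isomorphisms'' and instead verify directly, via functoriality applied to \(1_{Y'} \, 1_{Y'} = 1_{Y'}\), that \((\TotalSReplacementFunctor F)_{(X, q e), (X', q')} 1_{Y'}\) and \((\TotalSReplacementFunctor F)_{(X', q'), (X, q e)} 1_{Y'}\) compose to \(1_X\) and \(1_{X'}\), but the argument above is the cleaner phrasing.
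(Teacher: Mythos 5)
Your proposal is correct and follows essentially the same route as the paper's proof: apply corollary~\ref{cor:values_of_denominators_under_total_s-replacement_functor} to rewrite \((\TotalSReplacementFunctor F)_{(X, q), (X', q')} e\) as \((\TotalSReplacementFunctor F)_{(X, q e), (X', q')} 1_{Y'}\), and then observe that \(1_{Y'}\colon (Y', X, q e) \map (Y', X', q')\) is an isomorphism in \(\CategoryOfObjectsWithSReplacement{\mathcal{D}}{F}\), so its image under the functor \(\TotalSReplacementFunctor F\) is an isomorphism. Your explicit checks (that \(q e\) is a denominator by multiplicativity, and that the two copies of \(1_{Y'}\) are mutually inverse in the structure category) are exactly the details the paper leaves implicit.
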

\begin{proof}
We suppose given a denominator \(e\colon (Y, X, q) \map (Y', X', q')\) in \(\CategoryOfObjectsWithSReplacement{\mathcal{D}}{F}\). Then we have
\[(\TotalSReplacementFunctor F)_{(X, q), (X', q')} e = (\TotalSReplacementFunctor F)_{(X, q e), (X', q')} 1_{Y'}\]
in \(\GabrielZismanLocalisation(\mathcal{C})\) by corollary~\ref{cor:values_of_denominators_under_total_s-replacement_functor}. In particular, \((\TotalSReplacementFunctor F)_{(X, q), (X', q')} e = (\TotalSReplacementFunctor F)_{(X, q e), (X', q')} 1_{Y'}\) is an isomorphism in~\(\GabrielZismanLocalisation(\mathcal{C})\) since~\(1_{Y'}\colon (Y', X, q e) \map (Y', X', q')\) is an isomorphism in \(\CategoryOfObjectsWithSReplacement{\mathcal{D}}{F}\).
\end{proof}

\begin{notation} \label{not:induced_functor_of_total_s-replacement_functor}
We suppose that \(\mathcal{D}\) is multiplicative and that \(F\) is S-full and S-faithful. We denote by
\[\InducedFunctorOfTotalSReplacementFunctor F\colon \GabrielZismanLocalisation(\CategoryOfObjectsWithSReplacement{\mathcal{D}}{F}) \map \GabrielZismanLocalisation(\mathcal{C})\]
the unique functor with \(\TotalSReplacementFunctor F = \InducedFunctorOfTotalSReplacementFunctor F \comp \LocalisationFunctor[\GabrielZismanLocalisation(\CategoryOfObjectsWithSReplacement{\mathcal{D}}{F})]\).
\end{notation}

\begin{remark} \label{rem:description_of_induced_functor_of_total_s-replacement_functor_under_fullness_and_faithfulness_assumption}
We suppose that \(\mathcal{D}\) is multiplicative and that \(\GabrielZismanLocalisation(F)\) is full and faithful. The functor \(\InducedFunctorOfTotalSReplacementFunctor F\colon \GabrielZismanLocalisation(\CategoryOfObjectsWithSReplacement{\mathcal{D}}{F}) \map \GabrielZismanLocalisation(\mathcal{C})\) is given on the objects by
\[(\InducedFunctorOfTotalSReplacementFunctor F)_{(X, q)} Y = X\]
for \((Y, X, q) \in \Ob \GabrielZismanLocalisation(\CategoryOfObjectsWithSReplacement{\mathcal{D}}{F}) = \Ob \CategoryOfObjectsWithSReplacement{\mathcal{D}}{F}\), and on the morphisms as follows. Given a morphism \linebreak 
\(\psi\colon (Y, X, q) \map (Y', X', q')\) in \(\GabrielZismanLocalisation(\CategoryOfObjectsWithSReplacement{\mathcal{D}}{F})\), then \((\InducedFunctorOfTotalSReplacementFunctor F) \psi\colon X \map X'\) is the unique morphism in~\(\GabrielZismanLocalisation(\mathcal{C})\) with
\[\LocalisationFunctor(q) \, (\GabrielZismanLocalisation(\ForgetfulFunctor) \psi) = (\GabrielZismanLocalisation(F) (\InducedFunctorOfTotalSReplacementFunctor F) \psi) \, \LocalisationFunctor(q')\]
in \(\GabrielZismanLocalisation(\mathcal{D})\).
\[\begin{tikzpicture}[baseline=(m-2-1.base)]
  \matrix (m) [diagram]{
    F X &[3.5em] F X' \\
    Y & Y' \\ };
  \path[->, font=\scriptsize]
    (m-1-1) edge node[above] {\(\GabrielZismanLocalisation(F) (\InducedFunctorOfTotalSReplacementFunctor F) \psi\)} (m-1-2)
            edge node[left] {\(\LocalisationFunctor(q)\)} node[sloped, above] {\(\isomorphic\)} (m-2-1)
    (m-1-2) edge node[right] {\(\LocalisationFunctor(q')\)} node[sloped, below] {\(\isomorphic\)} (m-2-2)
    (m-2-1) edge node[above] {\(\GabrielZismanLocalisation(\ForgetfulFunctor) \psi\)} (m-2-2);
\end{tikzpicture}\]
\end{remark}
\begin{proof}
For~\((Y, X, q) \in \Ob \CategoryOfObjectsWithSReplacement{\mathcal{D}}{F}\) we have
\[(\InducedFunctorOfTotalSReplacementFunctor F)_{(X, q)} Y = (\TotalSReplacementFunctor F)_{(X, q)} Y = X\]
in \(\GabrielZismanLocalisation(\mathcal{C})\) by proposition~\ref{prop:well-definedness_of_total_s-replacement_functor}. We suppose given a morphism \(\psi\colon (Y, X, q) \map (Y', X', q')\) in \(\GabrielZismanLocalisation(\CategoryOfObjectsWithSReplacement{\mathcal{D}}{F})\) and we let \(\varphi\colon X \map X'\) be the unique morphism in \(\GabrielZismanLocalisation(\mathcal{C})\) with \(\LocalisationFunctor(q) \, (\GabrielZismanLocalisation(\ForgetfulFunctor) \psi) \, \LocalisationFunctor(q')^{- 1} = \GabrielZismanLocalisation(F) \varphi\), that is, with \(\LocalisationFunctor(q) \, (\GabrielZismanLocalisation(\ForgetfulFunctor) \psi) = (\GabrielZismanLocalisation(F) \varphi) \, \LocalisationFunctor(q')\) in~\(\GabrielZismanLocalisation(\mathcal{D})\). There exist \(n \in \Naturals\), morphisms~\(g_i\colon (Y_{i - 1}, X_{i - 1}, q_{i - 1}) \map (\tilde Y_i, \tilde X_i, \tilde q_i)\) in~\(\CategoryOfObjectsWithSReplacement{\mathcal{D}}{F}\) for~\(i \in [1, n]\) and denominators \(b_i\colon (Y_i, X_i, q_i) \map (\tilde Y_i, \tilde X_i, \tilde q_i)\) in~\(\CategoryOfObjectsWithSReplacement{\mathcal{D}}{F}\) for \(i \in [1, n - 1]\) with \((Y, X, q) = (Y_0, X_0, q_0)\), \((Y', X', q') = (\tilde Y_n, \tilde X_n, \tilde q_n)\) and such that
\[\psi = \LocalisationFunctor[\GabrielZismanLocalisation(\CategoryOfObjectsWithSReplacement{\mathcal{D}}{F})](g_1) \, \LocalisationFunctor[\GabrielZismanLocalisation(\CategoryOfObjectsWithSReplacement{\mathcal{D}}{F})](b_1)^{- 1} \, \dots \, \LocalisationFunctor[\GabrielZismanLocalisation(\CategoryOfObjectsWithSReplacement{\mathcal{D}}{F})](g_n)\]
in \(\GabrielZismanLocalisation(\CategoryOfObjectsWithSReplacement{\mathcal{D}}{F})\).
\[\begin{tikzpicture}[baseline=(m-1-1.base)]
  \matrix (m) [diagram]{
    (Y, X, q) & (\tilde Y_1, \tilde X_1, \tilde q_1) & (Y_1, X_1, q_1) & \dots & (Y_{n - 1}, X_{n - 1}, q_{n - 1}) & (Y', X', q') \\ };
  \path[->, font=\scriptsize]
    (m-1-1) edge[exists] node[above] {\(g_1\)} (m-1-2)
    (m-1-3) edge[exists] node[above] {\(g_2\)} (m-1-4)
            edge[exists, den] node[above] {\(b_1\)} (m-1-2)
    (m-1-5) edge[exists] node[above] {\(g_n\)} (m-1-6)
            edge[exists, den] node[above] {\(b_{n - 1}\)} (m-1-4);
\end{tikzpicture}\]
By proposition~\ref{prop:well-definedness_of_total_s-replacement_functor} we have
\begin{align*}
& \LocalisationFunctor[\GabrielZismanLocalisation(\mathcal{D})](q) \, (\GabrielZismanLocalisation(\ForgetfulFunctor) \psi) \\
& = \LocalisationFunctor[\GabrielZismanLocalisation(\mathcal{D})](q) \, \LocalisationFunctor[\GabrielZismanLocalisation(\mathcal{D})](g_1) \, \LocalisationFunctor[\GabrielZismanLocalisation(\mathcal{D})](b_1)^{- 1} \, \LocalisationFunctor[\GabrielZismanLocalisation(\mathcal{D})](g_2) \, \dots \, \LocalisationFunctor[\GabrielZismanLocalisation(\mathcal{D})](b_{n - 1})^{- 1} \, \LocalisationFunctor[\GabrielZismanLocalisation(\mathcal{D})](g_n) \\
& = (\GabrielZismanLocalisation(F) (\TotalSReplacementFunctor F)_{(X_0, q_0), (\tilde X_1, \tilde q_1)} g_1) \, (\GabrielZismanLocalisation(F) (\TotalSReplacementFunctor F)_{(X_1, q_1), (\tilde X_1, \tilde q_1)} b_1)^{- 1} \, (\GabrielZismanLocalisation(F) (\TotalSReplacementFunctor F)_{(X_1, q_1), (\tilde X_2, \tilde q_2)} g_2) \\
& \qquad \dots \, (\GabrielZismanLocalisation(F) (\TotalSReplacementFunctor F)_{(X_{n - 1}, q_{n - 1}), (\tilde X_n, \tilde q_n)} g_n) \, \LocalisationFunctor(q') \\
& = (\GabrielZismanLocalisation(F)(((\TotalSReplacementFunctor F)_{(X_0, q_0), (\tilde X_1, \tilde q_1)} g_1) \, ((\TotalSReplacementFunctor F)_{(X_1, q_1), (\tilde X_1, \tilde q_1)} b_1)^{- 1} \, ((\TotalSReplacementFunctor F)_{(X_1, q_1), (\tilde X_2, \tilde q_2)} g_2) \\
& \qquad \dots \, ((\TotalSReplacementFunctor F)_{(X_{n - 1}, q_{n - 1}), (\tilde X_n, \tilde q_n)} g_n))) \, \LocalisationFunctor(q')
\end{align*}
in \(\GabrielZismanLocalisation(\mathcal{D})\) and therefore
\begin{align*}
(\InducedFunctorOfTotalSReplacementFunctor F) \psi & = (\InducedFunctorOfTotalSReplacementFunctor F)(\LocalisationFunctor[\GabrielZismanLocalisation(\CategoryOfObjectsWithSReplacement{\mathcal{D}}{F})](g_1) \, \LocalisationFunctor[\GabrielZismanLocalisation(\CategoryOfObjectsWithSReplacement{\mathcal{D}}{F})](b_1)^{- 1} \, \dots \, \LocalisationFunctor[\GabrielZismanLocalisation(\CategoryOfObjectsWithSReplacement{\mathcal{D}}{F})](g_n)) \\
& = ((\TotalSReplacementFunctor F)_{(X_0, q_0), (\tilde X_1, \tilde q_1)} g_1) \, ((\TotalSReplacementFunctor F)_{(X_1, q_1), (\tilde X_1, \tilde q_1)} b_1)^{- 1} \, \dots \, ((\TotalSReplacementFunctor F)_{(X_{n - 1}, q_{n - 1}), (\tilde X_n, \tilde q_n)} g_n) = \varphi
\end{align*}
in \(\GabrielZismanLocalisation(\mathcal{C})\).
\[\begin{tikzpicture}[baseline=(m-2-1.base)]
  \matrix (m) [diagram]{
    F X &[7.5em] F \tilde X_1 &[8.5em] \dots &[10.0em] F X' \\
    Y & \tilde Y_1 & \dots & Y' \\ };
  \path[->, font=\scriptsize]
    (m-1-1) edge node[above] {\(\GabrielZismanLocalisation(F) (\TotalSReplacementFunctor F)_{(X, q), (\tilde X_1, \tilde q_1)} g_1\)} (m-1-2)
            edge node[right] {\(\LocalisationFunctor(q)\)} node[sloped, below] {\(\isomorphic\)} (m-2-1)
    (m-1-2) edge node[right] {\(\LocalisationFunctor(\tilde q_1)\)} node[sloped, below] {\(\isomorphic\)} ([yshift=2pt]m-2-2.north)
    (m-1-3) edge node[above] {\(\GabrielZismanLocalisation(F) (\TotalSReplacementFunctor F)_{(X_{n - 1}, q_{n - 1}), (X', q')} g_n\)} (m-1-4)
            edge node[above] {\(\GabrielZismanLocalisation(F) (\TotalSReplacementFunctor F)_{(X_1, q_1), (\tilde X_1, \tilde q_1)} b_1\)} node[below] {\(\isomorphic\)} (m-1-2)
    (m-1-4) edge node[right] {\(\LocalisationFunctor(q')\)} node[sloped, below] {\(\isomorphic\)} (m-2-4)
    (m-2-1) edge node[above] {\(\LocalisationFunctor(g_1)\)} (m-2-2)
    (m-2-3) edge node[above] {\(\LocalisationFunctor(g_n)\)} (m-2-4)
            edge node[above] {\(\LocalisationFunctor(b_1)\)} node[below] {\(\isomorphic\)} (m-2-2);
\end{tikzpicture} \qedhere\]
\end{proof}

The essential device in the proof of corollary~\ref{cor:total_s-replacement_functor_maps_denominators_to_isomorphisms} is corollary~\ref{cor:values_of_denominators_under_total_s-replacement_functor}, where we use the multiplicativity of~\(\mathcal{D}\) already for its formulation (\(q e\) has to be a denominator in \(\mathcal{D}\)). However, corollary~\ref{cor:total_s-replacement_functor_maps_denominators_to_isomorphisms} is the only step in our proof of the S{\nbd}approximation theorem~\ref{th:s-approximation_theorem} that needs the closedness of the denominators in \(\mathcal{D}\) under composition. This leads to the following question:

\begin{question} \label{qu:necessity_of_multiplicativity_of_s-approximation_functors}
Does corollary~\ref{cor:total_s-replacement_functor_maps_denominators_to_isomorphisms} still hold if we omit the assumption that \(\mathcal{D}\) is multiplicative?
\end{question}

If there is a positive answer to question~\ref{qu:necessity_of_multiplicativity_of_s-approximation_functors}, in order to prove the S{\nbd}approximation theorem~\ref{th:s-approximation_theorem}, it would suffice to replace the multiplicativity of \(\mathcal{D}\) by the assumption that \(\mathcal{D}\) has all trivial S-replacements, see definition~\ref{def:having_all_trivial_s-replacements}, which is needed for definition~\ref{def:canonical_lift_along_forgetful_functor_of_s-replacement_category} of the canonical lift.

\begin{proposition} \label{prop:canonical_lift_and_total_s-replacement_functor}
We suppose that \(\mathcal{D}\) has all trivial S-replacements along \(F\) and we suppose that \(F\) is S-full and~S{\nbd}faithful. Moreover, we let~\(\bar F\colon \mathcal{C} \map \CategoryOfObjectsWithSReplacement{\mathcal{D}}{F}\) be the canonical lift of \(F\) along the forgetful functor~\(\ForgetfulFunctor\colon \CategoryOfObjectsWithSReplacement{\mathcal{D}}{F} \map \mathcal{D}\).
\begin{enumerate}
\item \label{prop:canonical_lift_and_total_s-replacement_functor:coretraction} We have
\[\TotalSReplacementFunctor F \comp \bar F = \LocalisationFunctor[\GabrielZismanLocalisation(\mathcal{C})].\]
\item \label{prop:canonical_lift_and_total_s-replacement_functor:retraction_up_to_isomorphism_and_forgetful_functor} We have
\[\GabrielZismanLocalisation(F) \comp \TotalSReplacementFunctor F \isomorphic \LocalisationFunctor[\GabrielZismanLocalisation(\mathcal{D})] \comp \ForgetfulFunctor.\]
An isotransformation \(\bar \beta\colon \GabrielZismanLocalisation(F) \comp \TotalSReplacementFunctor F \map \LocalisationFunctor[\GabrielZismanLocalisation(\mathcal{D})] \comp \ForgetfulFunctor\) is given by
\[\bar \beta_{(Y, X, q)} = \LocalisationFunctor[\GabrielZismanLocalisation(\mathcal{D})](q)\colon F X \map Y\]
for \((Y, X, q) \in \Ob{\CategoryOfObjectsWithSReplacement{\mathcal{D}}{F}}\).
\item \label{prop:canonical_lift_and_total_s-replacement_functor:retraction_up_to_isomorphism} We suppose that \(F\) is S-dense. Then we have
\[\GabrielZismanLocalisation(\bar F) \comp \TotalSReplacementFunctor F \isomorphic \LocalisationFunctor[\GabrielZismanLocalisation(\CategoryOfObjectsWithSReplacement{\mathcal{D}}{F})].\]
An isotransformation \(\bar \beta\colon \GabrielZismanLocalisation(\bar F) \comp \TotalSReplacementFunctor F \map \LocalisationFunctor[\GabrielZismanLocalisation(\CategoryOfObjectsWithSReplacement{\mathcal{D}}{F})]\) is given by
\[\bar \beta_{(Y, X, q)} = \LocalisationFunctor[\GabrielZismanLocalisation(\CategoryOfObjectsWithSReplacement{\mathcal{D}}{F})]_{(X, 1_{F X}), (X, q)}(q)\colon (F X, X, 1_{F X}) \map (Y, X, q)\]
for \((Y, X, q) \in \Ob{\CategoryOfObjectsWithSReplacement{\mathcal{D}}{F}}\).
\end{enumerate}
\end{proposition}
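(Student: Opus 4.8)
The plan is to treat the three parts in turn, in each case writing down an explicit (iso)transformation and reducing every verification to the defining equation of $\TotalSReplacementFunctor F$ from proposition~\ref{prop:well-definedness_of_total_s-replacement_functor}. For part~\ref{prop:canonical_lift_and_total_s-replacement_functor:coretraction}, I first observe that on objects both $\TotalSReplacementFunctor F \comp \bar F$ and $\LocalisationFunctor[\GabrielZismanLocalisation(\mathcal{C})]$ send $X$ to $X$. On a morphism $f\colon X \map X'$ of $\mathcal{C}$, the element $(\TotalSReplacementFunctor F)(\bar F f) = (\TotalSReplacementFunctor F)_{(X, 1_{F X}), (X', 1_{F X'})}(F f)$ is, by proposition~\ref{prop:well-definedness_of_total_s-replacement_functor}, the unique $\varphi$ in $\GabrielZismanLocalisation(\mathcal{C})$ with $\LocalisationFunctor(1_{F X}) \, \LocalisationFunctor(F f) = (\GabrielZismanLocalisation(F) \varphi) \, \LocalisationFunctor(1_{F X'})$, that is, with $\LocalisationFunctor(F f) = \GabrielZismanLocalisation(F) \varphi$; since $\GabrielZismanLocalisation(F) \comp \LocalisationFunctor[\GabrielZismanLocalisation(\mathcal{C})] = \LocalisationFunctor[\GabrielZismanLocalisation(\mathcal{D})] \comp F$, the choice $\varphi = \LocalisationFunctor(f)$ solves this equation, so $(\TotalSReplacementFunctor F)(\bar F f) = \LocalisationFunctor(f)$ by uniqueness, and part~\ref{prop:canonical_lift_and_total_s-replacement_functor:coretraction} follows. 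For part~\ref{prop:canonical_lift_and_total_s-replacement_functor:retraction_up_to_isomorphism_and_forgetful_functor}, I set $\bar \beta_{(Y, X, q)} := \LocalisationFunctor[\GabrielZismanLocalisation(\mathcal{D})](q)$, which is an isomorphism since $q$ is a denominator in $\mathcal{D}$ and whose source and target are $F X = (\GabrielZismanLocalisation(F) \comp \TotalSReplacementFunctor F)(Y, X, q)$ and $Y = (\LocalisationFunctor[\GabrielZismanLocalisation(\mathcal{D})] \comp \ForgetfulFunctor)(Y, X, q)$ as required; naturality of $\bar \beta$ at a morphism $g\colon (Y, X, q) \map (Y', X', q')$ is exactly the equation $(\GabrielZismanLocalisation(F) (\TotalSReplacementFunctor F)_{(X, q), (X', q')} g) \, \LocalisationFunctor(q') = \LocalisationFunctor(q) \, \LocalisationFunctor(g)$ that defines $(\TotalSReplacementFunctor F)_{(X, q), (X', q')} g$, so $\bar \beta$ is an isotransformation.

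For part~\ref{prop:canonical_lift_and_total_s-replacement_functor:retraction_up_to_isomorphism}, since $F$ is S-dense there is a choice of S-replacements for $\mathcal{D}$ along $F$, so by corollary~\ref{cor:forgetful_functor_of_s-replacement_category_induces_equivalence_of_categories_on_localisations} the functor $\GabrielZismanLocalisation(\ForgetfulFunctor)\colon \GabrielZismanLocalisation(\CategoryOfObjectsWithSReplacement{\mathcal{D}}{F}) \map \GabrielZismanLocalisation(\mathcal{D})$ is an equivalence, in particular faithful. I would define $\bar \beta_{(Y, X, q)}$ to be the image under the localisation functor of the denominator $q\colon (F X, X, 1_{F X}) \map (Y, X, q)$ of $\CategoryOfObjectsWithSReplacement{\mathcal{D}}{F}$ (it is a denominator there because $\ForgetfulFunctor$ carries it to the denominator $q$ of $\mathcal{D}$); thus $\bar \beta_{(Y, X, q)}$ is an isomorphism from $(F X, X, 1_{F X}) = (\GabrielZismanLocalisation(\bar F) \comp \TotalSReplacementFunctor F)(Y, X, q)$ to $(Y, X, q) = \LocalisationFunctor(Y, X, q)$. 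For naturality it suffices, by faithfulness of $\GabrielZismanLocalisation(\ForgetfulFunctor)$, to check the naturality square after applying $\GabrielZismanLocalisation(\ForgetfulFunctor)$; using $\ForgetfulFunctor \comp \bar F = F$ (remark~\ref{rem:well-definedness_of_canonical_lift_along_forgetful_functor_of_s-replacement_category}), hence $\GabrielZismanLocalisation(\ForgetfulFunctor) \comp \GabrielZismanLocalisation(\bar F) = \GabrielZismanLocalisation(F)$, together with $\GabrielZismanLocalisation(\ForgetfulFunctor) \comp \LocalisationFunctor[\GabrielZismanLocalisation(\CategoryOfObjectsWithSReplacement{\mathcal{D}}{F})] = \LocalisationFunctor[\GabrielZismanLocalisation(\mathcal{D})] \comp \ForgetfulFunctor$, the pushed-forward square turns into the equation $(\GabrielZismanLocalisation(F) (\TotalSReplacementFunctor F)_{(X, q), (X', q')} g) \, \LocalisationFunctor(q') = \LocalisationFunctor(q) \, \LocalisationFunctor(g)$, which holds by proposition~\ref{prop:well-definedness_of_total_s-replacement_functor}. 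Hence $\bar \beta$ is an isotransformation. (Here I also use, as is implicit in the statement, that $\bar F$ preserves denominators, which follows from $\ForgetfulFunctor \comp \bar F = F$ and the fact that $\ForgetfulFunctor$ reflects denominators.)

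I expect part~\ref{prop:canonical_lift_and_total_s-replacement_functor:retraction_up_to_isomorphism} to be the main obstacle: the naturality square lives in $\GabrielZismanLocalisation(\CategoryOfObjectsWithSReplacement{\mathcal{D}}{F})$, whose morphisms are zigzags rather than single morphisms of $\CategoryOfObjectsWithSReplacement{\mathcal{D}}{F}$, so it cannot be read off the definition of $\TotalSReplacementFunctor F$ directly. Transporting the square forward along the faithful functor $\GabrielZismanLocalisation(\ForgetfulFunctor)$ — whose faithfulness is precisely where the S-density hypothesis enters — sidesteps this; alternatively one could argue via the zigzag description of morphisms in a Gabriel/Zisman localisation as in the proof of remark~\ref{rem:description_of_induced_functor_of_total_s-replacement_functor_under_fullness_and_faithfulness_assumption}, at the cost of a longer computation.
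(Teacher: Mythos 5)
Your proposal is correct and follows essentially the same route as the paper: part (a) via the uniqueness in the defining equation of \(\TotalSReplacementFunctor F\), part (b) by reading the defining quadrangle as the naturality square for \(\bar\beta_{(Y,X,q)} = \LocalisationFunctor(q)\), and part (c) by using S-density to get a choice of S-replacements, hence an equivalence \(\GabrielZismanLocalisation(\ForgetfulFunctor)\), whose faithfulness transports the commuting quadrangle from \(\GabrielZismanLocalisation(\mathcal{D})\) back to \(\GabrielZismanLocalisation(\CategoryOfObjectsWithSReplacement{\mathcal{D}}{F})\) — exactly the paper's argument.
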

\begin{proof} \
\begin{enumerate}
\item For every morphism \(f\colon X \map X'\) in \(\mathcal{C}\), we have \(\bar F f = F f\colon (F X, X, 1_{F X}) \map (F X', X', 1_{F X'})\). As
\[\LocalisationFunctor(1_{F X}) \, \LocalisationFunctor(F f) = \LocalisationFunctor(F f) = \GabrielZismanLocalisation(F) \LocalisationFunctor(f) = (\GabrielZismanLocalisation(F) \LocalisationFunctor(f)) \, \LocalisationFunctor(1_{F X'})\]
in \(\GabrielZismanLocalisation(\mathcal{D})\), we obtain
\[(\TotalSReplacementFunctor F) (\bar F f) = (\TotalSReplacementFunctor F)_{(X, 1_{F X}), (X', 1_{F X'})}(F f) = \LocalisationFunctor(f)\]
in \(\GabrielZismanLocalisation(\mathcal{C})\).
\[\begin{tikzpicture}[baseline=(m-2-1.base)]
  \matrix (m) [diagram]{
    F X & F X' \\
    F X & F X' \\ };
  \path[->, font=\scriptsize]
    (m-1-1) edge node[above] {\(F f\)} (m-1-2)
            edge[den] node[left=-2pt] {\(1_{F X}\)} (m-2-1)
    (m-1-2) edge[den] node[right=-1pt] {\(1_{F X'}\)} (m-2-2)
    (m-2-1) edge node[above] {\(F f\)} (m-2-2);
\end{tikzpicture}
\qquad
\begin{tikzpicture}[baseline=(m-2-1.base)]
  \matrix (m) [diagram]{
    F X &[3.0em] F X' \\
    F X & F X' \\ };
  \path[->, font=\scriptsize]
    (m-1-1) edge node[above] {\(\GabrielZismanLocalisation(F) \LocalisationFunctor(f)\)} (m-1-2)
            edge node[left] {\(\LocalisationFunctor(1_{F X})\)} node[sloped, above] {\(\isomorphic\)} (m-2-1)
    (m-1-2) edge node[right] {\(\LocalisationFunctor(1_{F X'})\)} node[sloped, below] {\(\isomorphic\)} (m-2-2)
    (m-2-1) edge node[above] {\(\LocalisationFunctor(F f)\)} (m-2-2);
\end{tikzpicture}\]
Thus we have \(\TotalSReplacementFunctor F \comp \bar F = \LocalisationFunctor[\GabrielZismanLocalisation(\mathcal{C})]\).
\item For every morphism \(g\colon (Y, X, q) \map (Y', X', q')\) in \(\CategoryOfObjectsWithSReplacement{\mathcal{D}}{F}\) the following quadrangle in \(\GabrielZismanLocalisation(\mathcal{D})\) commutes by definition of \(\TotalSReplacementFunctor F\colon \CategoryOfObjectsWithSReplacement{\mathcal{D}}{F} \map \GabrielZismanLocalisation(\mathcal{C})\).
\[\begin{tikzpicture}[baseline=(m-2-1.base)]
  \matrix (m) [diagram]{
    F X &[7.0em] F X' \\
    Y & Y' \\ };
  \path[->, font=\scriptsize]
    (m-1-1) edge node[above] {\(\GabrielZismanLocalisation(F) (\TotalSReplacementFunctor F)_{(X, q), (X', q')} g\)} (m-1-2)
            edge node[left] {\(\LocalisationFunctor(q)\)} node[sloped, above] {\(\isomorphic\)} (m-2-1)
    (m-1-2) edge node[right] {\(\LocalisationFunctor(q')\)} node[sloped, below] {\(\isomorphic\)} (m-2-2)
    (m-2-1) edge node[above] {\(\LocalisationFunctor(g)\)} (m-2-2);
\end{tikzpicture}\]
Thus we have a transformation \(\bar \beta\colon \GabrielZismanLocalisation(F) \comp \TotalSReplacementFunctor F \map \LocalisationFunctor[\GabrielZismanLocalisation(\mathcal{D})] \comp \ForgetfulFunctor\), given by
\[\bar \beta_{(Y, X, q)} = \LocalisationFunctor(q)\colon F X \map Y\]
for~\((Y, X, q) \in \Ob{\CategoryOfObjectsWithSReplacement{\mathcal{D}}{F}}\). Moreover, for every object \((Y, X, q)\) in~\(\CategoryOfObjectsWithSReplacement{\mathcal{D}}{F}\), as \((X, q)\) is an S{\nbd}replacement of \(Y\), the morphism \(q\colon F X \map Y\) is a denominator in \(\mathcal{D}\) and hence \(\bar \beta_{(Y, X, q)} = \LocalisationFunctor(q)\colon F X \map Y\) is an isomorphism in \(\GabrielZismanLocalisation(\mathcal{D})\). Thus \(\bar \beta\) is an isotransformation.
\item As \(F\) is S-dense, there exists a choice of S-replacements for \(\mathcal{D}\) along \(F\). Thus \(\ForgetfulFunctor\colon \CategoryOfObjectsWithSReplacement{\mathcal{D}}{F} \map \mathcal{D}\) is an equivalence of categories by remark~\ref{rem:forgetful_functor_of_s-replacement_category_and_choices_of_s-replacements} and hence \(\GabrielZismanLocalisation(\ForgetfulFunctor)\colon \GabrielZismanLocalisation(\CategoryOfObjectsWithSReplacement{\mathcal{D}}{F}) \map \GabrielZismanLocalisation(\mathcal{D})\) is an equivalence of categories by \(2\)-functoriality. In particular, \(\GabrielZismanLocalisation(\ForgetfulFunctor)\colon \GabrielZismanLocalisation(\CategoryOfObjectsWithSReplacement{\mathcal{D}}{F}) \map \GabrielZismanLocalisation(\mathcal{D})\) is faithful and so for every morphism \(g\colon (Y, X, q) \map (Y', X', q')\) in \(\CategoryOfObjectsWithSReplacement{\mathcal{D}}{F}\) the commutativity of the quadrangle
\[\begin{tikzpicture}[baseline=(m-2-1.base)]
  \matrix (m) [diagram]{
    F X &[7.0em] F X' \\
    Y & Y' \\ };
  \path[->, font=\scriptsize]
    (m-1-1) edge node[above] {\(\GabrielZismanLocalisation(F) (\TotalSReplacementFunctor F)_{(X, q), (X', q')} g\)} (m-1-2)
            edge node[left] {\(\LocalisationFunctor(q)\)} node[sloped, above] {\(\isomorphic\)} (m-2-1)
    (m-1-2) edge node[right] {\(\LocalisationFunctor(q')\)} node[sloped, below] {\(\isomorphic\)} (m-2-2)
    (m-2-1) edge node[above] {\(\LocalisationFunctor(g)\)} (m-2-2);
\end{tikzpicture}\]
in \(\GabrielZismanLocalisation(\mathcal{D})\) implies the commutativity of the quadrangle
\[\begin{tikzpicture}[baseline=(m-2-1.base)]
  \matrix (m) [diagram]{
    (F X, X, 1_{F X}) &[7.0em] (F X', X', 1_{F X'}) \\
    (Y, X, q) & (Y', X', q') \\ };
  \path[->, font=\scriptsize]
    (m-1-1) edge node[above] {\(\GabrielZismanLocalisation(\bar F) (\TotalSReplacementFunctor F)_{(X, q), (X', q')} g\)} (m-1-2)
            edge node[left] {\(\LocalisationFunctor_{(X, 1_{F X}), (X, q)}(q)\)} node[sloped, above] {\(\isomorphic\)} (m-2-1)
    (m-1-2) edge node[right] {\(\LocalisationFunctor_{(X', 1_{F X'}), (X', q')}(q')\)} node[sloped, below] {\(\isomorphic\)} (m-2-2)
    (m-2-1) edge node[above] {\(\LocalisationFunctor_{(X, q), (X', q')}(g)\)} (m-2-2);
\end{tikzpicture}\]
in \(\GabrielZismanLocalisation(\CategoryOfObjectsWithSReplacement{\mathcal{D}}{F})\). Thus we have a transformation \(\bar \beta\colon \GabrielZismanLocalisation(\bar F) \comp \TotalSReplacementFunctor F \map \LocalisationFunctor[\GabrielZismanLocalisation(\CategoryOfObjectsWithSReplacement{\mathcal{D}}{F})]\), given by
\[\bar \beta_{(Y, X, q)} = \LocalisationFunctor_{(X, 1_{F X}), (X, q)}(q)\colon (F X, X, 1_{F X}) \map (Y, X, q)\]
for~\((Y, X, q) \in \Ob{\CategoryOfObjectsWithSReplacement{\mathcal{D}}{F}}\). Moreover, for every object~\((Y, X, q)\) in~\(\CategoryOfObjectsWithSReplacement{\mathcal{D}}{F}\), as \((X, q)\) is an S{\nbd}replacement of \(Y\), the morphism \(q\colon F X \map Y\) is a denominator in \(\mathcal{D}\), hence \(q\colon (F X, X, 1_{F X}) \map (Y, X, q)\) is a denominator in \(\CategoryOfObjectsWithSReplacement{\mathcal{D}}{F}\) and therefore~\(\bar \beta_{(Y, X, q)} = \LocalisationFunctor_{(X, 1_{F X}), (X, q)}(q)\colon (F X, X, 1_{F X}) \map (Y, X, q)\) is an isomorphism in \(\GabrielZismanLocalisation(\CategoryOfObjectsWithSReplacement{\mathcal{D}}{F})\). Thus \(\bar \beta\) is an isotransformation. \qedhere
\end{enumerate}
\end{proof}

\begin{corollary} \label{cor:canonical_lift_along_forgetful_functor_of_s-replacement_category_induces_equivalence_on_localisations}
We suppose that \(\mathcal{D}\) is multiplicative and that \(F\) is S-full and S-faithful. Moreover, we let~\(\bar F\colon \mathcal{C} \map \CategoryOfObjectsWithSReplacement{\mathcal{D}}{F}\) be the canonical lift of \(F\) along the forgetful functor~\(\ForgetfulFunctor\colon \CategoryOfObjectsWithSReplacement{\mathcal{D}}{F} \map \mathcal{D}\).
\begin{enumerate}
\item \label{cor:canonical_lift_along_forgetful_functor_of_s-replacement_category_induces_equivalence_on_localisations:coretraction} We have
\[\InducedFunctorOfTotalSReplacementFunctor F \comp \GabrielZismanLocalisation(\bar F) = \id_{\GabrielZismanLocalisation(\mathcal{C})}.\]
\item \label{cor:canonical_lift_along_forgetful_functor_of_s-replacement_category_induces_equivalence_on_localisations:retraction_up_to_isomorphism_and_forgetful_functor} We have
\[\GabrielZismanLocalisation(F) \comp \InducedFunctorOfTotalSReplacementFunctor F \isomorphic \GabrielZismanLocalisation(\ForgetfulFunctor).\]
An isotransformation \(\bar \beta\colon \GabrielZismanLocalisation(F) \comp \InducedFunctorOfTotalSReplacementFunctor F \map \GabrielZismanLocalisation(\ForgetfulFunctor)\) is given by
\[\bar \beta_{(Y, X, q)} = \LocalisationFunctor[\GabrielZismanLocalisation(\mathcal{D})](q)\colon F X \map Y\]
for \((Y, X, q) \in \Ob{\GabrielZismanLocalisation(\CategoryOfObjectsWithSReplacement{\mathcal{D}}{F})} = \Ob \CategoryOfObjectsWithSReplacement{\mathcal{D}}{F}\).
\item \label{cor:canonical_lift_along_forgetful_functor_of_s-replacement_category_induces_equivalence_on_localisations:retraction_up_to_isomorphism} We suppose that \(F\) is S-dense. Then we have
\[\GabrielZismanLocalisation(\bar F) \comp \InducedFunctorOfTotalSReplacementFunctor F \isomorphic \id_{\GabrielZismanLocalisation(\CategoryOfObjectsWithSReplacement{\mathcal{D}}{F})}.\]
An isotransformation \(\bar \beta\colon \GabrielZismanLocalisation(\bar F) \comp \InducedFunctorOfTotalSReplacementFunctor F \map \id_{\GabrielZismanLocalisation(\CategoryOfObjectsWithSReplacement{\mathcal{D}}{F})}\) is given by
\[\bar \beta_{(Y, X, q)} = \LocalisationFunctor[\GabrielZismanLocalisation(\CategoryOfObjectsWithSReplacement{\mathcal{D}}{F})]_{(X, 1_{F X}), (X, q)}(q)\colon (F X, X, 1_{F X}) \map (Y, X, q)\]
for \((Y, X, q) \in \Ob{\GabrielZismanLocalisation(\CategoryOfObjectsWithSReplacement{\mathcal{D}}{F})} = \Ob \CategoryOfObjectsWithSReplacement{\mathcal{D}}{F}\).
\end{enumerate}
In particular, if \(F\) is S-dense, then \(\GabrielZismanLocalisation(\bar F)\colon \GabrielZismanLocalisation(\mathcal{C}) \map \GabrielZismanLocalisation(\CategoryOfObjectsWithSReplacement{\mathcal{D}}{F})\) and \(\InducedFunctorOfTotalSReplacementFunctor F\colon \GabrielZismanLocalisation(\CategoryOfObjectsWithSReplacement{\mathcal{D}}{F}) \map \GabrielZismanLocalisation(\mathcal{C})\) are mutually isomorphism inverse equivalences of categories.
\end{corollary}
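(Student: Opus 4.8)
The plan is to push the three assertions of proposition~\ref{prop:canonical_lift_and_total_s-replacement_functor} about the total S-replacement functor \(\TotalSReplacementFunctor F\colon \CategoryOfObjectsWithSReplacement{\mathcal{D}}{F} \map \GabrielZismanLocalisation(\mathcal{C})\) down to \(\GabrielZismanLocalisation(\CategoryOfObjectsWithSReplacement{\mathcal{D}}{F})\), using the defining identity \(\TotalSReplacementFunctor F = \InducedFunctorOfTotalSReplacementFunctor F \comp \LocalisationFunctor[\GabrielZismanLocalisation(\CategoryOfObjectsWithSReplacement{\mathcal{D}}{F})]\) (notation~\ref{not:induced_functor_of_total_s-replacement_functor}) together with the universal properties of the Gabriel/Zisman localisations, both with respect to functors and with respect to transformations (section~\ref{sec:preliminaries}). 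As a standing observation, since \(\mathcal{D}\) is multiplicative it has all trivial S-replacements along \(F\) by remark~\ref{rem:multiplicativity_implies_having_all_trivial_s-replacements}, so the canonical lift \(\bar F\) (definition~\ref{def:canonical_lift_along_forgetful_functor_of_s-replacement_category}), the functor \(\TotalSReplacementFunctor F\) (proposition~\ref{prop:well-definedness_of_total_s-replacement_functor}) and \(\InducedFunctorOfTotalSReplacementFunctor F\) are all available, and proposition~\ref{prop:canonical_lift_and_total_s-replacement_functor} applies. I shall use throughout that \(\Ob \LocalisationFunctor = \id\), so that a transformation descended along a localisation functor retains its components verbatim.

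For part~\ref{cor:canonical_lift_along_forgetful_functor_of_s-replacement_category_induces_equivalence_on_localisations:coretraction}, I would combine \(2\)-functoriality of the Gabriel/Zisman localisation, which gives \(\GabrielZismanLocalisation(\bar F) \comp \LocalisationFunctor[\GabrielZismanLocalisation(\mathcal{C})] = \LocalisationFunctor[\GabrielZismanLocalisation(\CategoryOfObjectsWithSReplacement{\mathcal{D}}{F})] \comp \bar F\), with notation~\ref{not:induced_functor_of_total_s-replacement_functor} and proposition~\ref{prop:canonical_lift_and_total_s-replacement_functor}\ref{prop:canonical_lift_and_total_s-replacement_functor:coretraction} to obtain
\[\InducedFunctorOfTotalSReplacementFunctor F \comp \GabrielZismanLocalisation(\bar F) \comp \LocalisationFunctor[\GabrielZismanLocalisation(\mathcal{C})] = \InducedFunctorOfTotalSReplacementFunctor F \comp \LocalisationFunctor[\GabrielZismanLocalisation(\CategoryOfObjectsWithSReplacement{\mathcal{D}}{F})] \comp \bar F = \TotalSReplacementFunctor F \comp \bar F = \LocalisationFunctor[\GabrielZismanLocalisation(\mathcal{C})].\]
Since \(\LocalisationFunctor[\GabrielZismanLocalisation(\mathcal{C})]\colon \mathcal{C} \map \GabrielZismanLocalisation(\mathcal{C})\) sends denominators to isomorphisms, the uniqueness part of the universal property of \(\GabrielZismanLocalisation(\mathcal{C})\), applied to \(\LocalisationFunctor[\GabrielZismanLocalisation(\mathcal{C})]\) itself, forces \(\InducedFunctorOfTotalSReplacementFunctor F \comp \GabrielZismanLocalisation(\bar F) = \id_{\GabrielZismanLocalisation(\mathcal{C})}\). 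Equivalently, this is remark~\ref{rem:criterion_for_induced_co_retraction_up_to_isomorphism_on_localisations}\ref{rem:criterion_for_induced_co_retraction_up_to_isomorphism_on_localisations:coretraction}\ref{rem:criterion_for_induced_co_retraction_up_to_isomorphism_on_localisations:coretraction:sufficient} applied with \(\bar F\) and \(\InducedFunctorOfTotalSReplacementFunctor F\) in place of \(F\) and \(G'\), using the identity isotransformation.

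For part~\ref{cor:canonical_lift_along_forgetful_functor_of_s-replacement_category_induces_equivalence_on_localisations:retraction_up_to_isomorphism_and_forgetful_functor}, proposition~\ref{prop:canonical_lift_and_total_s-replacement_functor}\ref{prop:canonical_lift_and_total_s-replacement_functor:retraction_up_to_isomorphism_and_forgetful_functor} supplies an isotransformation \(\bar \beta'\colon \GabrielZismanLocalisation(F) \comp \TotalSReplacementFunctor F \map \LocalisationFunctor[\GabrielZismanLocalisation(\mathcal{D})] \comp \ForgetfulFunctor\) with \(\bar \beta'_{(Y, X, q)} = \LocalisationFunctor(q)\). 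Using \(\GabrielZismanLocalisation(F) \comp \TotalSReplacementFunctor F = (\GabrielZismanLocalisation(F) \comp \InducedFunctorOfTotalSReplacementFunctor F) \comp \LocalisationFunctor[\GabrielZismanLocalisation(\CategoryOfObjectsWithSReplacement{\mathcal{D}}{F})]\) and, by \(2\)-functoriality, \(\LocalisationFunctor[\GabrielZismanLocalisation(\mathcal{D})] \comp \ForgetfulFunctor = \GabrielZismanLocalisation(\ForgetfulFunctor) \comp \LocalisationFunctor[\GabrielZismanLocalisation(\CategoryOfObjectsWithSReplacement{\mathcal{D}}{F})]\), I would read \(\bar \beta'\) as an isotransformation between the whiskerings of \(\GabrielZismanLocalisation(F) \comp \InducedFunctorOfTotalSReplacementFunctor F\) and of \(\GabrielZismanLocalisation(\ForgetfulFunctor)\) along \(\LocalisationFunctor[\GabrielZismanLocalisation(\CategoryOfObjectsWithSReplacement{\mathcal{D}}{F})]\). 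The universal property of \(\GabrielZismanLocalisation(\CategoryOfObjectsWithSReplacement{\mathcal{D}}{F})\) with respect to transformations then produces a unique transformation \(\bar \beta\colon \GabrielZismanLocalisation(F) \comp \InducedFunctorOfTotalSReplacementFunctor F \map \GabrielZismanLocalisation(\ForgetfulFunctor)\) with \(\bar \beta * \LocalisationFunctor[\GabrielZismanLocalisation(\CategoryOfObjectsWithSReplacement{\mathcal{D}}{F})] = \bar \beta'\); since \(\Ob \LocalisationFunctor = \id\), its component at \((Y, X, q)\) is again \(\LocalisationFunctor(q)\), and it is an isotransformation because \(q\) is a denominator in \(\mathcal{D}\), so \(\LocalisationFunctor(q)\) is invertible.

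Part~\ref{cor:canonical_lift_along_forgetful_functor_of_s-replacement_category_induces_equivalence_on_localisations:retraction_up_to_isomorphism} is handled the same way but invokes S-density: proposition~\ref{prop:canonical_lift_and_total_s-replacement_functor}\ref{prop:canonical_lift_and_total_s-replacement_functor:retraction_up_to_isomorphism} gives an isotransformation \(\GabrielZismanLocalisation(\bar F) \comp \TotalSReplacementFunctor F \map \LocalisationFunctor[\GabrielZismanLocalisation(\CategoryOfObjectsWithSReplacement{\mathcal{D}}{F})]\) with component \(\LocalisationFunctor_{(X, 1_{F X}), (X, q)}(q)\) at \((Y, X, q)\), and, since \(\TotalSReplacementFunctor F = \InducedFunctorOfTotalSReplacementFunctor F \comp \LocalisationFunctor[\GabrielZismanLocalisation(\CategoryOfObjectsWithSReplacement{\mathcal{D}}{F})]\), this is precisely the input of remark~\ref{rem:criterion_for_induced_co_retraction_up_to_isomorphism_on_localisations}\ref{rem:criterion_for_induced_co_retraction_up_to_isomorphism_on_localisations:retraction}\ref{rem:criterion_for_induced_co_retraction_up_to_isomorphism_on_localisations:retraction:sufficient} (with \(\bar F\) for \(F\) and \(\InducedFunctorOfTotalSReplacementFunctor F\) for \(G'\)), yielding a unique isotransformation \(\bar \beta\colon \GabrielZismanLocalisation(\bar F) \comp \InducedFunctorOfTotalSReplacementFunctor F \map \id_{\GabrielZismanLocalisation(\CategoryOfObjectsWithSReplacement{\mathcal{D}}{F})}\) descending it, with the asserted component. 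The final {``}in particular{''} is then immediate from parts~\ref{cor:canonical_lift_along_forgetful_functor_of_s-replacement_category_induces_equivalence_on_localisations:coretraction} and~\ref{cor:canonical_lift_along_forgetful_functor_of_s-replacement_category_induces_equivalence_on_localisations:retraction_up_to_isomorphism}; alternatively, all of~\ref{cor:canonical_lift_along_forgetful_functor_of_s-replacement_category_induces_equivalence_on_localisations:coretraction},~\ref{cor:canonical_lift_along_forgetful_functor_of_s-replacement_category_induces_equivalence_on_localisations:retraction_up_to_isomorphism} and the conclusion follow in one stroke from corollary~\ref{cor:criterion_for_inducing_an_equivalence_on_localisations} applied to \(\bar F\) and \(\TotalSReplacementFunctor F\) (which maps denominators to isomorphisms by corollary~\ref{cor:total_s-replacement_functor_maps_denominators_to_isomorphisms}), with proposition~\ref{prop:canonical_lift_and_total_s-replacement_functor}\ref{prop:canonical_lift_and_total_s-replacement_functor:coretraction} and~\ref{prop:canonical_lift_and_total_s-replacement_functor:retraction_up_to_isomorphism} as the two required isotransformations. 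I do not expect a genuine obstacle: the mathematical content is entirely in proposition~\ref{prop:canonical_lift_and_total_s-replacement_functor}, and the only things to watch are the whiskering/\(2\)-functoriality bookkeeping and the fact that \(\Ob \LocalisationFunctor = \id\) makes the explicit component formulas carry over unchanged.
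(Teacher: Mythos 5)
Your proposal is correct and follows essentially the paper's own route: all three parts are obtained by descending proposition~\ref{prop:canonical_lift_and_total_s-replacement_functor} along \(\LocalisationFunctor[\GabrielZismanLocalisation(\CategoryOfObjectsWithSReplacement{\mathcal{D}}{F})]\) via the universal properties (resp.\ remark~\ref{rem:criterion_for_induced_co_retraction_up_to_isomorphism_on_localisations}), with the components unchanged because \(\Ob \LocalisationFunctor = \id\). The only cosmetic deviations are that in part~\ref{cor:canonical_lift_along_forgetful_functor_of_s-replacement_category_induces_equivalence_on_localisations:retraction_up_to_isomorphism_and_forgetful_functor} you verify invertibility directly on components instead of citing remark~\ref{rem:criterion_for_induced_co_retraction_up_to_isomorphism_on_localisations}\ref{rem:criterion_for_induced_co_retraction_up_to_isomorphism_on_localisations:retraction}\ref{rem:criterion_for_induced_co_retraction_up_to_isomorphism_on_localisations:retraction:sufficient}, and your optional repackaging via corollary~\ref{cor:criterion_for_inducing_an_equivalence_on_localisations} would only give part~\ref{cor:canonical_lift_along_forgetful_functor_of_s-replacement_category_induces_equivalence_on_localisations:coretraction} up to isomorphism rather than the stated equality, which your primary argument already establishes.
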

\begin{proof} \
\begin{enumerate}
\item By proposition~\ref{prop:canonical_lift_and_total_s-replacement_functor}\ref{prop:canonical_lift_and_total_s-replacement_functor:coretraction}, we have
\[\InducedFunctorOfTotalSReplacementFunctor F \comp \GabrielZismanLocalisation(\bar F) \comp \LocalisationFunctor[\GabrielZismanLocalisation(\mathcal{C})] = \InducedFunctorOfTotalSReplacementFunctor F \comp \LocalisationFunctor[\GabrielZismanLocalisation(\CategoryOfObjectsWithSReplacement{\mathcal{D}}{F})] \comp \bar F = \TotalSReplacementFunctor F \comp \bar F = \LocalisationFunctor[\GabrielZismanLocalisation(\mathcal{C})]\]
and hence \(\InducedFunctorOfTotalSReplacementFunctor F \comp \GabrielZismanLocalisation(\bar F) = \id_{\GabrielZismanLocalisation(\mathcal{C})}\).
\item By proposition~\ref{prop:canonical_lift_and_total_s-replacement_functor}\ref{prop:canonical_lift_and_total_s-replacement_functor:retraction_up_to_isomorphism_and_forgetful_functor}, we have an isotransformation \(\bar \beta'\colon \GabrielZismanLocalisation(F) \comp \TotalSReplacementFunctor F \map \LocalisationFunctor[\GabrielZismanLocalisation(\mathcal{D})] \comp \ForgetfulFunctor\) given by
\[\bar \beta'_{(Y, X, q)} = \LocalisationFunctor[\GabrielZismanLocalisation(\mathcal{D})](q)\colon F X \map Y\]
for \((Y, X, q) \in \Ob{\CategoryOfObjectsWithSReplacement{\mathcal{D}}{F}}\). Since we have \(\GabrielZismanLocalisation(F) \comp \TotalSReplacementFunctor F = \GabrielZismanLocalisation(F) \comp \InducedFunctorOfTotalSReplacementFunctor F \comp \LocalisationFunctor[\GabrielZismanLocalisation(\CategoryOfObjectsWithSReplacement{\mathcal{D}}{F})]\) and \linebreak 
\(\LocalisationFunctor[\GabrielZismanLocalisation(\mathcal{D})] \comp \ForgetfulFunctor = \GabrielZismanLocalisation(\ForgetfulFunctor) \comp \LocalisationFunctor[\GabrielZismanLocalisation(\CategoryOfObjectsWithSReplacement{\mathcal{D}}{F})]\), there exists a unique transformation \(\bar \beta\colon \GabrielZismanLocalisation(F) \comp \InducedFunctorOfTotalSReplacementFunctor F \map \GabrielZismanLocalisation(\ForgetfulFunctor)\) with \(\bar \beta' = \bar \beta * \LocalisationFunctor[\GabrielZismanLocalisation(\CategoryOfObjectsWithSReplacement{\mathcal{D}}{F})]\), given by
\[\bar \beta_{(Y, X, q)} = \bar \beta'_{(Y, X, q)} = \LocalisationFunctor[\GabrielZismanLocalisation(\mathcal{D})](q)\colon F X \map Y\]
for \((Y, X, q) \in \Ob{\GabrielZismanLocalisation(\CategoryOfObjectsWithSReplacement{\mathcal{D}}{F})} = \Ob{\CategoryOfObjectsWithSReplacement{\mathcal{D}}{F}}\), and this transformation is an isotransformation by remark~\ref{rem:criterion_for_induced_co_retraction_up_to_isomorphism_on_localisations}\ref{rem:criterion_for_induced_co_retraction_up_to_isomorphism_on_localisations:retraction}\ref{rem:criterion_for_induced_co_retraction_up_to_isomorphism_on_localisations:retraction:sufficient}.
\item By proposition~\ref{prop:canonical_lift_and_total_s-replacement_functor}\ref{prop:canonical_lift_and_total_s-replacement_functor:retraction_up_to_isomorphism}, we have an isotransformation \(\bar \beta'\colon \GabrielZismanLocalisation(\bar F) \comp \TotalSReplacementFunctor F \map \LocalisationFunctor[\GabrielZismanLocalisation(\CategoryOfObjectsWithSReplacement{\mathcal{D}}{F})]\) given by
\[\bar \beta'_{(Y, X, q)} = \LocalisationFunctor[\GabrielZismanLocalisation(\CategoryOfObjectsWithSReplacement{\mathcal{D}}{F})]_{(X, 1_{F X}), (X, q)}(q)\colon (F X, X, 1_{F X}) \map (Y, X, q)\]
for \((Y, X, q) \in \Ob{\CategoryOfObjectsWithSReplacement{\mathcal{D}}{F}}\). As \(\GabrielZismanLocalisation(\bar F) \comp \TotalSReplacementFunctor F = \GabrielZismanLocalisation(\bar F) \comp \InducedFunctorOfTotalSReplacementFunctor F \comp \LocalisationFunctor[\GabrielZismanLocalisation(\CategoryOfObjectsWithSReplacement{\mathcal{D}}{F})]\) there exists a unique transformation \(\bar \beta\colon \GabrielZismanLocalisation(\bar F) \comp \InducedFunctorOfTotalSReplacementFunctor F \map \id_{\GabrielZismanLocalisation(\CategoryOfObjectsWithSReplacement{\mathcal{D}}{F})}\) with \(\bar \beta' = \bar \beta * \LocalisationFunctor[\GabrielZismanLocalisation(\CategoryOfObjectsWithSReplacement{\mathcal{D}}{F})]\), given by
\[\bar \beta_{(Y, X, q)} = \bar \beta'_{(Y, X, q)} = \LocalisationFunctor[\GabrielZismanLocalisation(\CategoryOfObjectsWithSReplacement{\mathcal{D}}{F})]_{(X, 1_{F X}), (X, q)}(q)\colon (F X, X, 1_{F X}) \map (Y, X, q)\]
for \((Y, X, q) \in \Ob{\GabrielZismanLocalisation(\CategoryOfObjectsWithSReplacement{\mathcal{D}}{F})} = \Ob{\CategoryOfObjectsWithSReplacement{\mathcal{D}}{F}}\), and this transformation is an isotransformation by remark~\ref{rem:criterion_for_induced_co_retraction_up_to_isomorphism_on_localisations}\ref{rem:criterion_for_induced_co_retraction_up_to_isomorphism_on_localisations:retraction}\ref{rem:criterion_for_induced_co_retraction_up_to_isomorphism_on_localisations:retraction:sufficient}. \qedhere
\end{enumerate}
\end{proof}

\begin{question} \label{qu:necessity_of_s-density_such_that_canonical_lift_is_s-equivalence}
Do proposition~\ref{prop:canonical_lift_and_total_s-replacement_functor}\ref{prop:canonical_lift_and_total_s-replacement_functor:retraction_up_to_isomorphism} and hence corollary~\ref{cor:canonical_lift_along_forgetful_functor_of_s-replacement_category_induces_equivalence_on_localisations}\ref{cor:canonical_lift_along_forgetful_functor_of_s-replacement_category_induces_equivalence_on_localisations:retraction_up_to_isomorphism} still hold if we omit the assumption that \(F\) is S-dense?
\end{question}

\subsection*{S-replacement functors} \label{sec:s-replacement_functors_and_the_s-approximation_theorem:s-replacement_functors}

To conclude the proof of the S-approximation theorem~\ref{th:s-approximation_theorem}, we have to compose the isomorphism inverses induced by the structure choice functor \(\StructureChoiceFunctor{R}\colon \mathcal{D} \map \CategoryOfObjectsWithSReplacement{\mathcal{D}}{F}\) for a choice of S-replacements \(R = ((X_Y, q_Y))_{Y \in \Ob \mathcal{D}}\) for \(\mathcal{D}\) along~\(F\), see corollary~\ref{cor:forgetful_functor_of_s-replacement_category_induces_equivalence_of_categories_on_localisations}, and by the total S-replacement functor \(\TotalSReplacementFunctor F\colon \CategoryOfObjectsWithSReplacement{\mathcal{D}}{F} \map \GabrielZismanLocalisation(\mathcal{C})\), see corollary~\ref{cor:canonical_lift_along_forgetful_functor_of_s-replacement_category_induces_equivalence_on_localisations}. This leads to the following definition.

\begin{definition}[S-replacement functor] \label{def:s-replacement_functor}
We suppose that \(F\) is S-full and S-faithful and we suppose given a choice of S-replacements \(R = ((X_Y, q_Y))_{Y \in \Ob \mathcal{D}}\) for \(\mathcal{D}\) along~\(F\). The composite
\[\SReplacementFunctor F = \SReplacementFunctor[R] F := \TotalSReplacementFunctor F \comp \StructureChoiceFunctor{R}\colon \mathcal{D} \map \GabrielZismanLocalisation(\mathcal{C})\]
is called the \newnotion{S-replacement functor} along \(F\) with respect to \(R\).
\end{definition}

\begin{remark} \label{rem:description_of_s-replacement_functor}
We suppose that \(F\) is S-full and S-faithful and we suppose given a choice of S{\nbd}replace\-ments~\(R = ((X_Y, q_Y))_{Y \in \Ob \mathcal{D}}\) for \(\mathcal{D}\) along~\(F\). The S-replacement functor~\(\SReplacementFunctor[R] F\colon \mathcal{D} \map \GabrielZismanLocalisation(\mathcal{C})\) along \(F\) with respect to \(R\) is given on the objects by
\[(\SReplacementFunctor[R] F) Y = X_Y\]
for \(Y \in \Ob \mathcal{D}\), and on the morphisms as follows. Given a morphism \(g\colon Y \map Y'\) in \(\mathcal{D}\), then \((\SReplacementFunctor[R] F) g\colon X_Y \map X_{Y'}\) is the unique morphism in \(\GabrielZismanLocalisation(\mathcal{C})\) with
\[\LocalisationFunctor(q_Y) \, \LocalisationFunctor(g) = (\GabrielZismanLocalisation(F) (\SReplacementFunctor[R] F) g) \, \LocalisationFunctor(q_{Y'})\]
in \(\GabrielZismanLocalisation(\mathcal{D})\).
\[\begin{tikzpicture}[baseline=(m-2-1.base)]
  \matrix (m) [diagram]{
    F X_Y &[3.5em] F X_{Y'} \\
    Y & Y' \\ };
  \path[->, font=\scriptsize]
    (m-1-1) edge node[above] {\(\GabrielZismanLocalisation(F) (\SReplacementFunctor F) g\)} (m-1-2)
            edge node[left] {\(\LocalisationFunctor(q_Y)\)} node[sloped, above] {\(\isomorphic\)} (m-2-1)
    (m-1-2) edge node[right] {\(\LocalisationFunctor(q_{Y'})\)} node[sloped, below] {\(\isomorphic\)} (m-2-2)
    (m-2-1) edge node[above] {\(\LocalisationFunctor(g)\)} (m-2-2);
\end{tikzpicture}\]
\end{remark}
\begin{proof}
For~\(Y \in \Ob \mathcal{D}\), we have \(\StructureChoiceFunctor{R} Y = (Y, X_Y, q_Y)\) in \(\CategoryOfObjectsWithSReplacement{\mathcal{D}}{F}\) and therefore
\[(\SReplacementFunctor F) Y = (\TotalSReplacementFunctor F) \StructureChoiceFunctor{R} Y = (\TotalSReplacementFunctor F)_{(X_Y, q_Y)} Y = X_Y\]
in \(\GabrielZismanLocalisation(\mathcal{C})\). We suppose given a morphism \(g\colon Y \map Y'\) in \(\mathcal{D}\). Then \((\SReplacementFunctor F) g = (\TotalSReplacementFunctor F) \StructureChoiceFunctor{R} g = (\TotalSReplacementFunctor F)_{(X_Y, q_Y), (X_{Y'}, q_{Y'})} g\) is the unique morphism in \(\GabrielZismanLocalisation(\mathcal{C})\) with
\[\LocalisationFunctor(q_Y) \, \LocalisationFunctor(g) = (\GabrielZismanLocalisation(F) (\TotalSReplacementFunctor F)_{(X_Y, q_Y), (X_{Y'}, q_{Y'})} g) \, \LocalisationFunctor(q_{Y'}) = (\GabrielZismanLocalisation(F) (\SReplacementFunctor F) g) \, \LocalisationFunctor(q_{Y'})\]
in~\(\GabrielZismanLocalisation(\mathcal{D})\).
\end{proof}

\begin{remark} \label{rem:s-replacement_functor_can_be_computed_up_to_isomorphism_via_arbitrary_s-replacements}
We suppose that \(F\) is S-full and S-faithful and we suppose given a choice of S{\nbd}replace- \linebreak 
ments~\(R = ((X_Y, q_Y))_{Y \in \Ob \mathcal{D}}\) for \(\mathcal{D}\) along~\(F\). For every object \(Y\) in \(\mathcal{D}\) and every S-replacement \((\tilde X, \tilde q)\) of \(Y\) along~\(F\) we have the isomorphism
\[(\TotalSReplacementFunctor F)_{(X_Y, q_Y), (\tilde X, \tilde q)} 1_Y\colon (\SReplacementFunctor[R] F) Y \map \tilde X\]
in \(\GabrielZismanLocalisation(\mathcal{C})\).
\end{remark}
\begin{proof}
Given an object \(Y\) in \(\mathcal{D}\) and an S-replacement \((\tilde X, \tilde q)\) of \(Y\) along~\(F\), then \((\TotalSReplacementFunctor F)_{(X_Y, q_Y), (\tilde X, \tilde q)} 1_Y\) is an isomorphism from \((\TotalSReplacementFunctor F)_{(X_Y, q_Y)} Y = (\TotalSReplacementFunctor F) \StructureChoiceFunctor{R} Y = (\SReplacementFunctor F) Y\) to \((\TotalSReplacementFunctor F)_{(\tilde X, \tilde q)} Y = \tilde X\) in \(\GabrielZismanLocalisation(\mathcal{C})\) with inverse~\(((\TotalSReplacementFunctor F)_{(X_Y, q_Y), (\tilde X, \tilde q)} 1_Y)^{- 1} = (\TotalSReplacementFunctor F)_{(\tilde X, \tilde q), (X_Y, q_Y)} 1_Y\).
\end{proof}

\begin{remark} \label{rem:isomorphism_of_s-replacement_functors}
We suppose that \(F\) is S-full and S-faithful and we suppose given choices of S-replace- \linebreak 
ments \(R = ((X_Y, q_Y))_{Y \in \Ob \mathcal{D}}\) and \(\tilde R = ((\tilde X_Y, \tilde q_Y))_{Y \in \Ob \mathcal{D}}\) for \(\mathcal{D}\) along~\(F\). Then we have
\[\SReplacementFunctor[R] F \isomorphic \SReplacementFunctor[\tilde R] F\colon \mathcal{D} \map \GabrielZismanLocalisation(\mathcal{C}).\]
An isotransformation \(\alpha_{R, \tilde R}\colon \SReplacementFunctor[R] F \map \SReplacementFunctor[\tilde R] F\) is given by
\[(\alpha_{R, \tilde R})_Y = (\TotalSReplacementFunctor F)_{(X_Y, q_Y), (\tilde X_Y, \tilde q_Y)} 1_Y\colon \SReplacementFunctor[R] F Y \map \SReplacementFunctor[\tilde R] F Y\]
for \(Y \in \Ob \mathcal{D}\). The inverse of~\(\alpha_{R, \tilde R}\) is given by \(\alpha_{R, \tilde R}^{- 1} = \alpha_{\tilde R, R}\).
\end{remark}
\begin{proof}
We have \(\SReplacementFunctor[R] F = \TotalSReplacementFunctor F \comp \StructureChoiceFunctor{R}\) and~\(\SReplacementFunctor[\tilde R] F = \TotalSReplacementFunctor F \comp \StructureChoiceFunctor{\tilde R}\). By~\cite[cor.~(A.12)]{thomas:2012:a_calculus_of_fractions_for_the_homotopy_category_of_a_brown_cofibration_category}, we have
\[\SReplacementFunctor[R] F = \TotalSReplacementFunctor F \comp \StructureChoiceFunctor{R} \isomorphic \TotalSReplacementFunctor F \comp \StructureChoiceFunctor{\tilde R} = \SReplacementFunctor[\tilde R] F,\]
an isotransformation \(\alpha_{R, \tilde R}\colon \SReplacementFunctor[R] F \map \SReplacementFunctor[\tilde R] F\) is given by
\[(\alpha_{R, \tilde R})_Y = \TotalSReplacementFunctor F_{(X_Y, q_Y), (\tilde X_Y, \tilde q_Y)} 1_Y\colon (\SReplacementFunctor[R] F) Y \map (\SReplacementFunctor[\tilde R] F) Y\]
for \(Y \in \Ob \mathcal{D}\), and the inverse of \(\alpha_{R, \tilde R}\) is given by \(\alpha_{R, \tilde R}^{- 1} = \alpha_{\tilde R, R}\).
\end{proof}

\begin{remark} \label{rem:s-replacement_functor_maps_denominators_to_isomorphisms}
We suppose that \(\mathcal{D}\) is multiplicative and that \(F\) is S-full and S-faithful. Moreover, we suppose given a choice of S{\nbd}replacements \(R = ((X_Y, q_Y))_{Y \in \Ob \mathcal{D}}\) for \(\mathcal{D}\) along~\(F\). The S-replacement func\-tor~\(\SReplacementFunctor F\colon \mathcal{D} \map \GabrielZismanLocalisation(\mathcal{C})\) along \(F\) with respect to~\(R\) maps denominators in \(\mathcal{D}\) to isomorphisms in~\(\GabrielZismanLocalisation(\mathcal{C})\).
\end{remark}
\begin{proof}
The structure choice functor \(\StructureChoiceFunctor{R}\colon \mathcal{D} \map \CategoryOfObjectsWithSReplacement{\mathcal{D}}{F}\) preserves denominators, that is, it maps denominators in~\(\mathcal{D}\) to denominators in \(\CategoryOfObjectsWithSReplacement{\mathcal{D}}{F}\). The total S-replacement functor \(\TotalSReplacementFunctor F\colon \CategoryOfObjectsWithSReplacement{\mathcal{D}}{F} \map \GabrielZismanLocalisation(\mathcal{C})\) maps denominators in \(\CategoryOfObjectsWithSReplacement{\mathcal{D}}{F}\) to isomorphisms in \(\GabrielZismanLocalisation(\mathcal{C})\) by corollary~\ref{cor:total_s-replacement_functor_maps_denominators_to_isomorphisms}. Thus \(\SReplacementFunctor F = \TotalSReplacementFunctor F \comp \StructureChoiceFunctor{R}\) maps denominators in \(\mathcal{D}\) to isomorphisms in~\(\GabrielZismanLocalisation(\mathcal{C})\).
\end{proof}

\begin{notation} \label{not:induced_functor_of_s-replacement_functor}
We suppose that \(\mathcal{D}\) is multiplicative and that \(F\) is S-full and S-faithful. Moreover, we suppose given a choice of S{\nbd}replacements \(R = ((X_Y, q_Y))_{Y \in \Ob \mathcal{D}}\) for \(\mathcal{D}\) along~\(F\). We denote by
\[\InducedFunctorOfSReplacementFunctor F = \InducedFunctorOfSReplacementFunctor[R] F\colon \GabrielZismanLocalisation(\mathcal{D}) \map \GabrielZismanLocalisation(\mathcal{C})\]
the unique functor with \(\SReplacementFunctor[R] F = \InducedFunctorOfSReplacementFunctor[R] F \comp \LocalisationFunctor[\GabrielZismanLocalisation(\mathcal{D})]\).
\end{notation}

\begin{remark} \label{rem:induced_functor_of_s-replacement_functor_and_induced_functor_of_total_s-replacement_functor}
We suppose that \(\mathcal{D}\) is multiplicative and that \(F\) is S-full and S-faithful. Moreover, we suppose given a choice of S{\nbd}replacements \(R = ((X_Y, q_Y))_{Y \in \Ob \mathcal{D}}\) for \(\mathcal{D}\) along~\(F\). Then we have
\[\InducedFunctorOfSReplacementFunctor[R] F = \InducedFunctorOfTotalSReplacementFunctor F \comp \GabrielZismanLocalisation(\StructureChoiceFunctor{R}).\]
\end{remark}
\begin{proof}
Since
\[\InducedFunctorOfTotalSReplacementFunctor F \comp \GabrielZismanLocalisation(\StructureChoiceFunctor{R}) \comp \LocalisationFunctor[\GabrielZismanLocalisation(\mathcal{D})] = \InducedFunctorOfTotalSReplacementFunctor F \comp \LocalisationFunctor[\GabrielZismanLocalisation(\CategoryOfObjectsWithSReplacement{\mathcal{D}}{F})] \comp \StructureChoiceFunctor{R} = \TotalSReplacementFunctor F \comp \StructureChoiceFunctor{R} = \SReplacementFunctor[R] F,\]
we necessarily have \(\InducedFunctorOfSReplacementFunctor[R] F = \InducedFunctorOfTotalSReplacementFunctor F \comp \GabrielZismanLocalisation(\StructureChoiceFunctor{R})\).
\end{proof}

\begin{remark} \label{rem:description_of_induced_functor_of_s-replacement_functor_under_fullness_and_faithfulness_assumption}
We suppose that \(\mathcal{D}\) is multiplicative and that \(\GabrielZismanLocalisation(F)\) is full and faithful. Moreover, we suppose given a choice of S{\nbd}replacements \(R = ((X_Y, q_Y))_{Y \in \Ob \mathcal{D}}\) for \(\mathcal{D}\) along~\(F\). The functor \(\InducedFunctorOfSReplacementFunctor[R] F\colon \GabrielZismanLocalisation(\mathcal{D}) \map \GabrielZismanLocalisation(\mathcal{C})\) is given on the objects by
\[(\InducedFunctorOfSReplacementFunctor[R] F) Y = X_Y\]
for \(Y \in \Ob \GabrielZismanLocalisation(\mathcal{D}) = \Ob \mathcal{D}\), and on the morphisms as follows. Given a morphism~\(\psi\colon Y \map Y'\) in \(\GabrielZismanLocalisation(\mathcal{D})\), then \((\InducedFunctorOfSReplacementFunctor[R] F) \psi\colon X_Y \map X_{Y'}\) is the unique morphism in~\(\GabrielZismanLocalisation(\mathcal{C})\) with
\[\LocalisationFunctor(q_Y) \, \psi = (\GabrielZismanLocalisation(F) (\InducedFunctorOfSReplacementFunctor[R] F) \psi) \, \LocalisationFunctor(q_{Y'})\]
in \(\GabrielZismanLocalisation(\mathcal{D})\).
\[\begin{tikzpicture}[baseline=(m-2-1.base)]
  \matrix (m) [diagram]{
    F X_Y &[3.5em] F X_{Y'} \\
    Y & Y' \\ };
  \path[->, font=\scriptsize]
    (m-1-1) edge node[above] {\(\GabrielZismanLocalisation(F) (\InducedFunctorOfSReplacementFunctor[R] F) \psi\)} (m-1-2)
            edge node[left] {\(\LocalisationFunctor(q_Y)\)} node[sloped, above] {\(\isomorphic\)} (m-2-1)
    (m-1-2) edge node[right] {\(\LocalisationFunctor(q_{Y'})\)} node[sloped, below] {\(\isomorphic\)} (m-2-2)
    (m-2-1) edge node[above] {\(\psi\)} (m-2-2);
\end{tikzpicture}\]
\end{remark}
\begin{proof}
For~\(Y \in \Ob \mathcal{D}\) we have
\[(\InducedFunctorOfSReplacementFunctor[R] F) Y = (\InducedFunctorOfTotalSReplacementFunctor F) \GabrielZismanLocalisation(\StructureChoiceFunctor{R}) Y = (\InducedFunctorOfTotalSReplacementFunctor F)_{(X_Y, q_Y)} Y = X_Y\]
in \(\GabrielZismanLocalisation(\mathcal{C})\) by remark~\ref{rem:induced_functor_of_s-replacement_functor_and_induced_functor_of_total_s-replacement_functor} and remark~\ref{rem:description_of_induced_functor_of_total_s-replacement_functor_under_fullness_and_faithfulness_assumption}. We suppose given a morphism \(\psi\colon Y \map Y'\) in \(\GabrielZismanLocalisation(\mathcal{D})\) and we let~\(\varphi\colon X_Y \map X_{Y'}\) be the unique morphism in \(\GabrielZismanLocalisation(\mathcal{C})\) with \(\LocalisationFunctor(q_Y) \, \psi \, \LocalisationFunctor(q_{Y'})^{- 1} = \GabrielZismanLocalisation(F) \varphi\), that is, \linebreak 
with \(\LocalisationFunctor(q_Y) \, \psi = (\GabrielZismanLocalisation(F) \varphi) \, \LocalisationFunctor(q_{Y'})\) in~\(\GabrielZismanLocalisation(\mathcal{D})\). Then \(\GabrielZismanLocalisation(\StructureChoiceFunctor{R}) \psi\colon (Y, X_Y, q_Y) \map (Y', X_{Y'}, q_{Y'})\) is a morphism \linebreak 
in~\(\GabrielZismanLocalisation(\CategoryOfObjectsWithSReplacement{\mathcal{D}}{F})\) with
\[\LocalisationFunctor(q_Y) \, (\GabrielZismanLocalisation(\ForgetfulFunctor) \GabrielZismanLocalisation(\StructureChoiceFunctor{R}) \psi) = \LocalisationFunctor(q_Y) \, \psi = (\GabrielZismanLocalisation(F) \varphi) \, \LocalisationFunctor(q_{Y'})\]
in \(\GabrielZismanLocalisation(\mathcal{D})\). Thus we have
\[(\InducedFunctorOfSReplacementFunctor[R] F) \psi = (\InducedFunctorOfTotalSReplacementFunctor F) \GabrielZismanLocalisation(\StructureChoiceFunctor{R}) \psi = \varphi\]
by remark~\ref{rem:induced_functor_of_s-replacement_functor_and_induced_functor_of_total_s-replacement_functor} and remark~\ref{rem:description_of_induced_functor_of_total_s-replacement_functor_under_fullness_and_faithfulness_assumption}.
\end{proof}

\subsection*{The S-approximation theorem} \label{sec:s-replacement_functors_and_the_s-approximation_theorem:the_s-approximation_theorem}

Finally, we can state and prove the main result of this article.

\begin{theorem}[S-approximation theorem] \label{th:s-approximation_theorem}
We suppose that \(\mathcal{D}\) is multiplicative and that \(F\) is S-full and S-faithful. Moreover, we suppose given a choice of S{\nbd}replacements \(R = ((X_Y, q_Y))_{Y \in \Ob \mathcal{D}}\) for \(\mathcal{D}\) along~\(F\). The functors
\begin{align*}
& \GabrielZismanLocalisation(F)\colon \GabrielZismanLocalisation(\mathcal{C}) \map \GabrielZismanLocalisation(\mathcal{D}), \\
& \InducedFunctorOfSReplacementFunctor[R] F\colon \GabrielZismanLocalisation(\mathcal{D}) \map \GabrielZismanLocalisation(\mathcal{C})
\end{align*}
are mutually isomorphism inverse equivalences of categories. An isotransformation~\(\alpha\colon \InducedFunctorOfSReplacementFunctor[R] F \comp \GabrielZismanLocalisation(F) \map \id_{\GabrielZismanLocalisation(\mathcal{C})}\) is given by
\[\alpha_{X'} = (\TotalSReplacementFunctor F)_{(X_{F X'}, q_{F X'}), (X', 1_{F X'})} 1_{F X'}\colon X_{F X'} \map X'\]
for \(X' \in \Ob \GabrielZismanLocalisation(\mathcal{C}) = \Ob \mathcal{C}\), and an isotransformation \(\beta\colon \GabrielZismanLocalisation(F) \comp \InducedFunctorOfSReplacementFunctor[R] F \map \id_{\GabrielZismanLocalisation(\mathcal{D})}\) is given by
\[\beta_Y = \LocalisationFunctor[\GabrielZismanLocalisation(\mathcal{D})](q_{Y})\colon F X_Y \map Y\]
for \(Y \in \Ob \GabrielZismanLocalisation(\mathcal{D}) = \Ob \mathcal{D}\).
\end{theorem}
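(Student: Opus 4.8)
The plan is to deduce the statement from corollary~\ref{cor:criterion_for_inducing_an_equivalence_on_localisations}, applied to the functor $G := \SReplacementFunctor[R] F = \TotalSReplacementFunctor F \comp \StructureChoiceFunctor{R}\colon \mathcal{D} \map \GabrielZismanLocalisation(\mathcal{C})$. First I would record the bookkeeping: as a choice of S-replacements for $\mathcal{D}$ along $F$ is given, $\mathcal{D}$ has enough S-replacements along $F$ by remark~\ref{rem:existence_of_choice_of_s-replacements}, that is, $F$ is S-dense; and $\mathcal{D}$ has all trivial S-replacements along $F$ by remark~\ref{rem:multiplicativity_implies_having_all_trivial_s-replacements}, so that the canonical lift $\bar F\colon \mathcal{C} \map \CategoryOfObjectsWithSReplacement{\mathcal{D}}{F}$ is defined, and the total S-replacement functor $\TotalSReplacementFunctor F$ and the S-replacement functor $\SReplacementFunctor[R] F = G$ are defined since $F$ is S-full and S-faithful. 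By remark~\ref{rem:s-replacement_functor_maps_denominators_to_isomorphisms}, $G$ maps denominators in $\mathcal{D}$ to isomorphisms in $\GabrielZismanLocalisation(\mathcal{C})$, and the functor it induces on $\GabrielZismanLocalisation(\mathcal{D})$ is $\InducedFunctorOfSReplacementFunctor[R] F$ by notation~\ref{not:induced_functor_of_s-replacement_functor}. So it remains to produce isotransformations $\alpha_0\colon G \comp F \map \LocalisationFunctor[\GabrielZismanLocalisation(\mathcal{C})]$ and $\beta_0\colon \GabrielZismanLocalisation(F) \comp G \map \LocalisationFunctor[\GabrielZismanLocalisation(\mathcal{D})]$; corollary~\ref{cor:criterion_for_inducing_an_equivalence_on_localisations} then gives that the induced transformations $\alpha := \hat\alpha_0\colon \InducedFunctorOfSReplacementFunctor[R] F \comp \GabrielZismanLocalisation(F) \map \id_{\GabrielZismanLocalisation(\mathcal{C})}$ and $\beta := \hat\beta_0\colon \GabrielZismanLocalisation(F) \comp \InducedFunctorOfSReplacementFunctor[R] F \map \id_{\GabrielZismanLocalisation(\mathcal{D})}$ are isotransformations and that $\GabrielZismanLocalisation(F)$ and $\InducedFunctorOfSReplacementFunctor[R] F$ are mutually isomorphism inverse equivalences of categories. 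Since $\Ob \LocalisationFunctor$ is the identity on both localisations, $\alpha$ and $\beta$ have the same components as $\alpha_0$ and $\beta_0$, so it only remains to construct the latter and read off their components.

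For $\beta_0$ I would use $\GabrielZismanLocalisation(F) \comp G = \GabrielZismanLocalisation(F) \comp \TotalSReplacementFunctor F \comp \StructureChoiceFunctor{R}$ and whisker the isotransformation $\bar\beta\colon \GabrielZismanLocalisation(F) \comp \TotalSReplacementFunctor F \map \LocalisationFunctor[\GabrielZismanLocalisation(\mathcal{D})] \comp \ForgetfulFunctor$ of proposition~\ref{prop:canonical_lift_and_total_s-replacement_functor}\ref{prop:canonical_lift_and_total_s-replacement_functor:retraction_up_to_isomorphism_and_forgetful_functor} on the right by $\StructureChoiceFunctor{R}$; since $\ForgetfulFunctor \comp \StructureChoiceFunctor{R} = \id_{\mathcal{D}}$ by remark~\ref{rem:forgetful_functor_of_s-replacement_category_and_choices_of_s-replacements}\ref{rem:forgetful_functor_of_s-replacement_category_and_choices_of_s-replacements:retraction} and whiskering an isotransformation by a functor is again an isotransformation, $\beta_0 := \bar\beta * \StructureChoiceFunctor{R}$ is an isotransformation $\GabrielZismanLocalisation(F) \comp G \map \LocalisationFunctor[\GabrielZismanLocalisation(\mathcal{D})]$ with $(\beta_0)_Y = \bar\beta_{\StructureChoiceFunctor{R} Y} = \bar\beta_{(Y, X_Y, q_Y)} = \LocalisationFunctor[\GabrielZismanLocalisation(\mathcal{D})](q_Y)$ for $Y \in \Ob \mathcal{D}$, which is the asserted formula for $\beta_Y$. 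For $\alpha_0$ I would use the factorisation $F = \ForgetfulFunctor \comp \bar F$ of remark~\ref{rem:well-definedness_of_canonical_lift_along_forgetful_functor_of_s-replacement_category}\ref{rem:well-definedness_of_canonical_lift_along_forgetful_functor_of_s-replacement_category:factorisation} to rewrite $G \comp F = \TotalSReplacementFunctor F \comp \StructureChoiceFunctor{R} \comp \ForgetfulFunctor \comp \bar F$, and whisker the isotransformation $\bar\alpha\colon \StructureChoiceFunctor{R} \comp \ForgetfulFunctor \map \id_{\CategoryOfObjectsWithSReplacement{\mathcal{D}}{F}}$ of remark~\ref{rem:forgetful_functor_of_s-replacement_category_and_choices_of_s-replacements}\ref{rem:forgetful_functor_of_s-replacement_category_and_choices_of_s-replacements:coretraction_up_to_isomorphism} on the right by $\bar F$ and on the left by $\TotalSReplacementFunctor F$; since $\TotalSReplacementFunctor F \comp \bar F = \LocalisationFunctor[\GabrielZismanLocalisation(\mathcal{C})]$ by proposition~\ref{prop:canonical_lift_and_total_s-replacement_functor}\ref{prop:canonical_lift_and_total_s-replacement_functor:coretraction}, this yields an isotransformation $\alpha_0 := \TotalSReplacementFunctor F * \bar\alpha * \bar F\colon G \comp F \map \LocalisationFunctor[\GabrielZismanLocalisation(\mathcal{C})]$ with $(\alpha_0)_{X'} = \TotalSReplacementFunctor F(\bar\alpha_{\bar F X'}) = \TotalSReplacementFunctor F\bigl(1_{F X'}\colon (F X', X_{F X'}, q_{F X'}) \map (F X', X', 1_{F X'})\bigr) = (\TotalSReplacementFunctor F)_{(X_{F X'}, q_{F X'}), (X', 1_{F X'})} 1_{F X'}$ for $X' \in \Ob \mathcal{C}$, which is the asserted formula for $\alpha_{X'}$. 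Feeding $\alpha_0$ and $\beta_0$ into corollary~\ref{cor:criterion_for_inducing_an_equivalence_on_localisations} then concludes the proof.

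The substantive work is all in the earlier results — the construction and functoriality of $\TotalSReplacementFunctor F$ in proposition~\ref{prop:well-definedness_of_total_s-replacement_functor}, its interplay with the canonical lift in proposition~\ref{prop:canonical_lift_and_total_s-replacement_functor}, and the abstract criterion of corollary~\ref{cor:criterion_for_inducing_an_equivalence_on_localisations} — so I do not anticipate a real obstacle. The one place that needs care is the whiskering bookkeeping: one must verify that the sources and targets of the two-cells $\TotalSReplacementFunctor F * \bar\alpha * \bar F$ and $\bar\beta * \StructureChoiceFunctor{R}$ are exactly the four functors $G \comp F$, $\LocalisationFunctor[\GabrielZismanLocalisation(\mathcal{C})]$, $\GabrielZismanLocalisation(F) \comp G$ and $\LocalisationFunctor[\GabrielZismanLocalisation(\mathcal{D})]$, which amounts to combining the identities $\ForgetfulFunctor \comp \bar F = F$, $\ForgetfulFunctor \comp \StructureChoiceFunctor{R} = \id_{\mathcal{D}}$, $\TotalSReplacementFunctor F \comp \bar F = \LocalisationFunctor[\GabrielZismanLocalisation(\mathcal{C})]$ and $\SReplacementFunctor[R] F = \TotalSReplacementFunctor F \comp \StructureChoiceFunctor{R}$, and then evaluating the whiskered transformations componentwise against the explicit descriptions of $\bar\alpha$ and $\bar\beta$ recalled in the respective references.
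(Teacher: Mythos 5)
Your proposal is correct, and all the formulas come out exactly as in the statement; the difference from the paper's own argument is one of packaging rather than substance. The paper first passes everything to the localisation level: it composes the two pairs of mutually isomorphism inverse equivalences from corollary~\ref{cor:forgetful_functor_of_s-replacement_category_induces_equivalence_of_categories_on_localisations} and corollary~\ref{cor:canonical_lift_along_forgetful_functor_of_s-replacement_category_induces_equivalence_on_localisations}, identifies \(\InducedFunctorOfSReplacementFunctor[R] F = \InducedFunctorOfTotalSReplacementFunctor F \comp \GabrielZismanLocalisation(\StructureChoiceFunctor{R})\) via remark~\ref{rem:induced_functor_of_s-replacement_functor_and_induced_functor_of_total_s-replacement_functor}, and obtains \(\alpha\) and \(\beta\) by whiskering the localised isotransformations \(\InducedFunctorOfTotalSReplacementFunctor F * \bar\alpha * \GabrielZismanLocalisation(\bar F)\) and \(\bar\beta * \GabrielZismanLocalisation(\StructureChoiceFunctor{R})\). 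You instead stay one level earlier: you whisker the un-localised isotransformations of remark~\ref{rem:forgetful_functor_of_s-replacement_category_and_choices_of_s-replacements}\ref{rem:forgetful_functor_of_s-replacement_category_and_choices_of_s-replacements:coretraction_up_to_isomorphism} and proposition~\ref{prop:canonical_lift_and_total_s-replacement_functor}\ref{prop:canonical_lift_and_total_s-replacement_functor:retraction_up_to_isomorphism_and_forgetful_functor} to get \(\alpha_0\colon \SReplacementFunctor[R] F \comp F \map \LocalisationFunctor[\GabrielZismanLocalisation(\mathcal{C})]\) and \(\beta_0\colon \GabrielZismanLocalisation(F) \comp \SReplacementFunctor[R] F \map \LocalisationFunctor[\GabrielZismanLocalisation(\mathcal{D})]\), and then apply the folklore criterion of corollary~\ref{cor:criterion_for_inducing_an_equivalence_on_localisations} once (using remark~\ref{rem:s-replacement_functor_maps_denominators_to_isomorphisms} and notation~\ref{not:induced_functor_of_s-replacement_functor} to see that the induced functor \(\hat G\) is \(\InducedFunctorOfSReplacementFunctor[R] F\)). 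Since corollaries~\ref{cor:forgetful_functor_of_s-replacement_category_induces_equivalence_of_categories_on_localisations} and~\ref{cor:canonical_lift_along_forgetful_functor_of_s-replacement_category_induces_equivalence_on_localisations} are themselves deduced from exactly those un-localised statements, the two routes rest on the same key lemmas; what your version buys is that it bypasses the intermediate localisation-level corollaries and remark~\ref{rem:induced_functor_of_s-replacement_functor_and_induced_functor_of_total_s-replacement_functor} at the price of invoking corollary~\ref{cor:criterion_for_inducing_an_equivalence_on_localisations} and remark~\ref{rem:s-replacement_functor_maps_denominators_to_isomorphisms} directly, and your observation that \(\Ob\LocalisationFunctor\) is the identity correctly justifies reading off the components of \(\alpha\) and \(\beta\) from \(\alpha_0\) and \(\beta_0\).
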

\begin{proof}
By remark~\ref{rem:well-definedness_of_canonical_lift_along_forgetful_functor_of_s-replacement_category}\ref{rem:well-definedness_of_canonical_lift_along_forgetful_functor_of_s-replacement_category:factorisation}, we have \(F = \ForgetfulFunctor \comp \bar F\), where \(\bar F\colon \mathcal{C} \map \CategoryOfObjectsWithSReplacement{\mathcal{D}}{F}\) denotes the canonical lift of \(F\) along the forgetful functor \(\ForgetfulFunctor\colon \CategoryOfObjectsWithSReplacement{\mathcal{D}}{F} \map \mathcal{D}\). By corollary~\ref{cor:forgetful_functor_of_s-replacement_category_induces_equivalence_of_categories_on_localisations}, we have \(\GabrielZismanLocalisation(\ForgetfulFunctor) \comp \GabrielZismanLocalisation(\StructureChoiceFunctor{R}) = \id_{\GabrielZismanLocalisation(\mathcal{D})}\) and an~isotransformation \(\bar \alpha\colon \GabrielZismanLocalisation(\StructureChoiceFunctor{R}) \comp \GabrielZismanLocalisation(\ForgetfulFunctor) \map \id_{\GabrielZismanLocalisation(\CategoryOfObjectsWithSReplacement{\mathcal{D}}{F})}\) is given by \(\bar \alpha_{(Y, X', q')} = 1_Y\colon (Y, X_Y, q_Y) \map (Y, X', q')\) for \((Y, X', q') \in \Ob{\GabrielZismanLocalisation(\CategoryOfObjectsWithSReplacement{\mathcal{D}}{F})} = \Ob{\CategoryOfObjectsWithSReplacement{\mathcal{D}}{F}}\). By corollary~\ref{cor:canonical_lift_along_forgetful_functor_of_s-replacement_category_induces_equivalence_on_localisations}\ref{cor:canonical_lift_along_forgetful_functor_of_s-replacement_category_induces_equivalence_on_localisations:coretraction}, \ref{cor:canonical_lift_along_forgetful_functor_of_s-replacement_category_induces_equivalence_on_localisations:retraction_up_to_isomorphism_and_forgetful_functor}, we have \(\InducedFunctorOfTotalSReplacementFunctor F \comp \GabrielZismanLocalisation(\bar F) = \id_{\GabrielZismanLocalisation(\mathcal{C})}\) and an isotransformation~\(\bar \beta\colon \GabrielZismanLocalisation(F) \comp \InducedFunctorOfTotalSReplacementFunctor F \map \GabrielZismanLocalisation(\ForgetfulFunctor)\) given by \(\bar \beta_{(Y, X', q')} = \LocalisationFunctor[\GabrielZismanLocalisation(\mathcal{D})](q')\colon F X' \map Y\) for~\((Y, X', q') \in \Ob{\GabrielZismanLocalisation(\CategoryOfObjectsWithSReplacement{\mathcal{D}}{F})} = \Ob \CategoryOfObjectsWithSReplacement{\mathcal{D}}{F}\).
\[\begin{tikzpicture}[baseline=(m-4-1.base)]
  \matrix (m) [diagram=4.0em]{
    \mathcal{C} & & \mathcal{D} \\ [-1.5em]
    \mathcal{C} & \CategoryOfObjectsWithSReplacement{\mathcal{D}}{F} & \mathcal{D} \\
    \GabrielZismanLocalisation(\mathcal{C}) & \GabrielZismanLocalisation(\CategoryOfObjectsWithSReplacement{\mathcal{D}}{F}) & \GabrielZismanLocalisation(\mathcal{D}) \\ [-1.5em]
    \GabrielZismanLocalisation(\mathcal{C}) & & \GabrielZismanLocalisation(\mathcal{D}) \\ };
  \path[->, font=\scriptsize]
    (m-1-1) edge node[above] {\(F\)} (m-1-3)
            edge[equality] (m-2-1)
    (m-1-3) edge[equality] (m-2-3)
    (m-2-1) edge node[above] {\(\bar F\)} (m-2-2)
            edge node[right] {\(\LocalisationFunctor\)} (m-3-1)
    (m-2-2) edge node[above] {\(\ForgetfulFunctor\)} node[below=1pt] {\(\categoricallyequivalent\)} (m-2-3)
            edge node[right] {\(\LocalisationFunctor\)} (m-3-2)
            edge node[above left=-3pt] {\(\TotalSReplacementFunctor F\)} (m-3-1)
    (m-2-3) edge node[right] {\(\LocalisationFunctor\)} (m-3-3)
    (m-2-3.south west) edge[bend left=15] node[below] {\(\StructureChoiceFunctor{R}\)} (m-2-2.south east)
    (m-3-1) edge node[above] {\(\GabrielZismanLocalisation(\bar F)\)} node[below] {\(\categoricallyequivalent\)} (m-3-2)
    (m-3-2) edge node[above] {\(\GabrielZismanLocalisation(\ForgetfulFunctor)\)} node[below] {\(\categoricallyequivalent\)} (m-3-3)
    (m-3-2.south west) edge[bend left=15] node[below] {\(\InducedFunctorOfTotalSReplacementFunctor F\)} (m-3-1.south east)
    (m-3-3.south west) edge[bend left=15] node[below] {\(\GabrielZismanLocalisation(\StructureChoiceFunctor{R})\)} (m-3-2.south east)
    (m-4-1) edge node[above] {\(\GabrielZismanLocalisation(F)\)} node[below=5pt] {\(\categoricallyequivalent\)} (m-4-3)
            edge[equality] (m-3-1)
    (m-4-3) edge[equality] (m-3-3)
    (m-4-3.south west) edge[bend left=15] node[below] {\(\InducedFunctorOfSReplacementFunctor[R] F\)} (m-4-1.south east);
\end{tikzpicture}\]
By remark~\ref{rem:induced_functor_of_s-replacement_functor_and_induced_functor_of_total_s-replacement_functor}, we have \(\InducedFunctorOfSReplacementFunctor F = \InducedFunctorOfTotalSReplacementFunctor F \comp \GabrielZismanLocalisation(\StructureChoiceFunctor{R})\). We obtain
\begin{align*}
\InducedFunctorOfSReplacementFunctor F \comp \GabrielZismanLocalisation(F) & = \InducedFunctorOfTotalSReplacementFunctor F \comp \GabrielZismanLocalisation(\StructureChoiceFunctor{R}) \comp \GabrielZismanLocalisation(\ForgetfulFunctor) \comp \GabrielZismanLocalisation(\bar F) \isomorphic \InducedFunctorOfTotalSReplacementFunctor F \comp \id_{\GabrielZismanLocalisation(\CategoryOfObjectsWithSReplacement{\mathcal{D}}{F})} \comp \GabrielZismanLocalisation(\bar F) = \InducedFunctorOfTotalSReplacementFunctor F \comp \GabrielZismanLocalisation(\bar F) \\
& = \id_{\GabrielZismanLocalisation(\mathcal{C})}, \\
\GabrielZismanLocalisation(F) \comp \InducedFunctorOfSReplacementFunctor F & = \GabrielZismanLocalisation(F) \comp \InducedFunctorOfTotalSReplacementFunctor F \comp \GabrielZismanLocalisation(\StructureChoiceFunctor{R}) \isomorphic \GabrielZismanLocalisation(\ForgetfulFunctor) \comp \GabrielZismanLocalisation(\StructureChoiceFunctor{R}) = \id_{\GabrielZismanLocalisation(\mathcal{D})},
\end{align*}
where isotransformations \(\alpha\colon \InducedFunctorOfSReplacementFunctor F \comp \GabrielZismanLocalisation(F) \map \id_{\GabrielZismanLocalisation(\mathcal{C})}\) and \(\beta\colon \GabrielZismanLocalisation(F) \comp \InducedFunctorOfSReplacementFunctor F \map \id_{\GabrielZismanLocalisation(\mathcal{D})}\) are given by \linebreak 
\(\alpha = \InducedFunctorOfTotalSReplacementFunctor F * \bar \alpha * \GabrielZismanLocalisation(\bar F)\) and~\(\beta = \bar \beta * \GabrielZismanLocalisation(\StructureChoiceFunctor{R})\). Thus \(\GabrielZismanLocalisation(F)\colon \GabrielZismanLocalisation(\mathcal{C}) \map \GabrielZismanLocalisation(\mathcal{D})\) and \(\InducedFunctorOfSReplacementFunctor F\colon \GabrielZismanLocalisation(\mathcal{D}) \map \GabrielZismanLocalisation(\mathcal{C})\) are mutually isomorphism inverse equivalences of categories.

For \(X' \in \Ob \mathcal{C}\), we have
\[\bar \alpha_{\GabrielZismanLocalisation(\bar F) X'} = \bar \alpha_{(F X', X', 1_{F X'})} = 1_{F X'}\colon (F X', X_{F X'}, q_{F X'}) \map (F X', X', 1_{F X'})\]
in~\(\GabrielZismanLocalisation(\CategoryOfObjectsWithSReplacement{\mathcal{D}}{F})\) and thus
\[\alpha_{X'} = (\InducedFunctorOfTotalSReplacementFunctor F) \bar \alpha_{\GabrielZismanLocalisation(\bar F) X'} = (\InducedFunctorOfTotalSReplacementFunctor F)_{(X_{F X'}, q_{F X'}), (X', 1_{F X'})} 1_{F X'} = (\TotalSReplacementFunctor F)_{(X_{F X'}, q_{F X'}), (X', 1_{F X'})} 1_{F X'}\colon X_{F X'} \map X'\]
in \(\GabrielZismanLocalisation(\mathcal{C})\). Moreover, for \(Y \in \Ob \mathcal{D}\), we have
\[\beta_{Y} = \bar \beta_{\GabrielZismanLocalisation(\StructureChoiceFunctor{R}) Y} = \bar \beta_{(Y, X_Y, q_Y)} = \LocalisationFunctor[\GabrielZismanLocalisation(\mathcal{D})](q_Y)\colon F X_Y \map Y\]
in \(\GabrielZismanLocalisation(\mathcal{D})\).
\end{proof}

\begin{corollary} \label{cor:characterisation_of_s-equivalences_as_s-dense_s-full_and_s-faithful_morphisms_of_categories_with_denominators}
We suppose that \(\mathcal{D}\) is multiplicative. The morphism of categories with denomina- \linebreak 
tors~\(F\colon \mathcal{C} \map \mathcal{D}\) is an S-equivalence if and only if it is S-dense, S-full and S-faithful.
\end{corollary}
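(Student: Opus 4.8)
The plan is to deduce the corollary directly from the S-approximation theorem~\ref{th:s-approximation_theorem} together with the definitions of the characterising conditions; there are two implications to establish, and neither requires new constructions.

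For the \emph{only if} direction, I would assume that $F$ is an S-equivalence. By definition~\ref{def:s-equivalence} this already gives that $F$ is S-dense and that $\GabrielZismanLocalisation(F)\colon \GabrielZismanLocalisation(\mathcal{C}) \map \GabrielZismanLocalisation(\mathcal{D})$ is an equivalence of categories. Since every equivalence of categories is in particular full and faithful, $\GabrielZismanLocalisation(F)$ is full and faithful, and then remark~\ref{rem:s-fullness_implies_fullness_on_localisations} and remark~\ref{rem:faithfulness_of_induced_functor_implies_s-faithfulness} yield that $F$ is S-full and S-faithful, respectively. Hence $F$ is S-dense, S-full and S-faithful, as required; this direction uses nothing beyond the definitions and the two cited remarks, and in particular does not use the multiplicativity hypothesis on $\mathcal{D}$.

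For the \emph{if} direction, I would assume that $\mathcal{D}$ is multiplicative and that $F$ is S-dense, S-full and S-faithful. Since $F$ is S-dense, $\mathcal{D}$ has enough S-replacements along $F$, so by remark~\ref{rem:existence_of_choice_of_s-replacements} there exists a choice of S-replacements $R = ((X_Y, q_Y))_{Y \in \Ob \mathcal{D}}$ for $\mathcal{D}$ along $F$. As $\mathcal{D}$ is multiplicative and $F$ is S-full and S-faithful, the S-approximation theorem~\ref{th:s-approximation_theorem} applies to this data and shows that $\GabrielZismanLocalisation(F)\colon \GabrielZismanLocalisation(\mathcal{C}) \map \GabrielZismanLocalisation(\mathcal{D})$ and $\InducedFunctorOfSReplacementFunctor[R] F\colon \GabrielZismanLocalisation(\mathcal{D}) \map \GabrielZismanLocalisation(\mathcal{C})$ are mutually isomorphism inverse equivalences of categories; in particular $\GabrielZismanLocalisation(F)$ is an equivalence. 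Together with S-density of $F$, definition~\ref{def:s-equivalence} then gives that $F$ is an S-equivalence.

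Since the whole argument reduces to bookkeeping with the already-established S-approximation theorem, the definitions, and the two remarks relating (S-)fullness and (S-)faithfulness, there is no genuine obstacle at this stage: all the real work --- the canonical lift $\bar F$, the total S-replacement functor $\TotalSReplacementFunctor F$, the S-replacement functor $\SReplacementFunctor[R] F$, and the isotransformations witnessing that $\InducedFunctorOfSReplacementFunctor[R] F$ is an isomorphism inverse to $\GabrielZismanLocalisation(F)$ --- has already been carried out. The only point worth double-checking is that the hypotheses of theorem~\ref{th:s-approximation_theorem} (multiplicativity of $\mathcal{D}$, S-fullness and S-faithfulness of $F$, and the existence of a choice of S-replacements) are all available under the stated assumptions, which they are precisely because S-density is, by definition, equivalent to $\mathcal{D}$ having enough S-replacements along $F$, hence to the existence of such a choice.
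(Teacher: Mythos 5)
Your proposal is correct and follows essentially the same route as the paper: the forward direction extracts S-density from the definition and uses that an equivalence is full and faithful (hence S-full and S-faithful, via the two cited remarks), while the converse picks a choice of S-replacements from S-density and invokes the S-approximation theorem~\ref{th:s-approximation_theorem}. No gaps.
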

\begin{proof}
First, we suppose that \(F\) is an S-equivalence, that is, we suppose that \(F\) is S-dense and that the induced functor \(\GabrielZismanLocalisation(F)\colon \GabrielZismanLocalisation(\mathcal{C}) \map \GabrielZismanLocalisation(\mathcal{D})\) is an equivalence. Then \(\GabrielZismanLocalisation(F)\) is full, so in particular \(F\) is S-full. Moreover, \(\GabrielZismanLocalisation(F)\) is faithful, so in particular \(F\) is S-faithful.

Conversely, we suppose that \(F\) is S-dense, S-full and S-faithful. As \(F\) is S-dense, there exists a choice of S{\nbd}replacements \(R = ((X_Y, q_Y))_{Y \in \Ob \mathcal{D}}\) for \(\mathcal{D}\) along~\(F\). But then \(\GabrielZismanLocalisation(F)\) is an equivalence of categories by the S-approximation theorem~\ref{th:s-approximation_theorem}, that is, \(F\) is an S-equivalence.
\end{proof}

It would have been nice to replace the multiplicativity of \(\mathcal{D}\) in theorem~\ref{th:s-approximation_theorem} and corollary~\ref{cor:characterisation_of_s-equivalences_as_s-dense_s-full_and_s-faithful_morphisms_of_categories_with_denominators} by requesting~\(\mathcal{D}\) to have all trivial S-replacements, which is weaker. However, the closedness under composition seems to be needed in the proof of corollary~\ref{cor:total_s-replacement_functor_maps_denominators_to_isomorphisms}. Cf.\ question~\ref{qu:necessity_of_multiplicativity_of_s-approximation_functors}.

In the whole proof of corollary~\ref{cor:characterisation_of_s-equivalences_as_s-dense_s-full_and_s-faithful_morphisms_of_categories_with_denominators}, including the preparing facts, we did not apply the dense-full-faithful criterion to the induced functor \(\GabrielZismanLocalisation(F)\). In fact, corollary~\ref{cor:characterisation_of_s-equivalences_as_s-dense_s-full_and_s-faithful_morphisms_of_categories_with_denominators} may be seen as a generalisation of this well-known result, which one reobtains if the denominators in \(\mathcal{C}\) and \(\mathcal{D}\) are supposed to be precisely the isomorphisms, respectively. Cf.~\cite[app.~A, sec.~1]{thomas:2012:a_calculus_of_fractions_for_the_homotopy_category_of_a_brown_cofibration_category}. 

We record a symmetric relationship between the isotransformations from the S-approximation theorem~\ref{th:s-approximation_theorem}:

\begin{remark} \label{rem:symmetric_relationship_between_isotransformations_from_s-approximation_theorem}
We suppose that \(\mathcal{D}\) is multiplicative and that \(F\) is S-full and S-faithful. Moreover, we suppose given a choice of S{\nbd}replacements \(R = ((X_Y, q_Y))_{Y \in \Ob \mathcal{D}}\) for \(\mathcal{D}\) along~\(F\). We let \(\alpha\colon \InducedFunctorOfSReplacementFunctor[R] F \comp \GabrielZismanLocalisation(F) \map \id_{\GabrielZismanLocalisation(\mathcal{C})}\) be the isotransformation given by
\[\alpha_{X'} = (\TotalSReplacementFunctor F)_{(X_{F X'}, q_{F X'}), (X', 1_{F X'})} 1_{F X'}\colon X_{F X'} \map X'\]
for \(X' \in \Ob \mathcal{C}\), and we let \(\beta\colon \GabrielZismanLocalisation(F) \comp \InducedFunctorOfSReplacementFunctor[R] F \map \id_{\GabrielZismanLocalisation(\mathcal{D})}\) be the isotransformation given by
\[\beta_Y = \LocalisationFunctor[\GabrielZismanLocalisation(\mathcal{D})](q_{Y})\colon F X_Y \map Y\]
for \(Y \in \Ob \mathcal{D}\). Then we have
\begin{align*}
\GabrielZismanLocalisation(F) * \alpha = \beta * \GabrielZismanLocalisation(F), \\
\InducedFunctorOfSReplacementFunctor[R] F * \beta = \alpha * \InducedFunctorOfSReplacementFunctor[R] F.
\end{align*}
\end{remark}
\begin{proof}
For \(X' \in \Ob \mathcal{C}\) we have \(\alpha_{X'} = (\TotalSReplacementFunctor F)_{(X_{F X'}, q_{F X'}), (X', 1_{F X'})} 1_{F X'}\) in \(\GabrielZismanLocalisation(\mathcal{C})\) and therefore
\begin{align*}
\GabrielZismanLocalisation(F) \alpha_{X'} & = (\GabrielZismanLocalisation(F) (\TotalSReplacementFunctor F)_{(X_{F X'}, q_{F X'}), (X', 1_{F X'})} 1_{F X'}) \, \LocalisationFunctor(1_{F X'}) = \LocalisationFunctor(q_{F X'}) \, \LocalisationFunctor(1_{F X'}) = \beta_{F X'} \\
& = \beta_{\GabrielZismanLocalisation(F) X'}
\end{align*}
in \(\GabrielZismanLocalisation(\mathcal{D})\).
\[\begin{tikzpicture}[baseline=(m-2-1.base)]
  \matrix (m) [diagram]{
    F X_{F X'} &[12.5em] F X' \\
    F X' & F X' \\ };
  \path[->, font=\scriptsize]
    (m-1-1) edge node[above] {\(\GabrielZismanLocalisation(F) (\TotalSReplacementFunctor F)_{(X_{F X'}, q_{F X'}), (X', 1_{F X'})} 1_{F X'}\)} (m-1-2)
            edge node[left] {\(\LocalisationFunctor(q_{F X'})\)} node[sloped, above] {\(\isomorphic\)} (m-2-1)
    (m-1-2) edge node[right] {\(\LocalisationFunctor(1_{F X'})\)} node[sloped, below] {\(\isomorphic\)} (m-2-2)
    (m-2-1) edge node[above] {\(\LocalisationFunctor(1_{F X'})\)} (m-2-2);
\end{tikzpicture}\]
Thus we have \(\GabrielZismanLocalisation(F) * \alpha = \beta * \GabrielZismanLocalisation(F)\). Since \(\GabrielZismanLocalisation(F)\) is an equivalence of categories by the S-approximation theorem~\ref{th:s-approximation_theorem}, it is in particular faithful, so that we also obtain \(\InducedFunctorOfSReplacementFunctor F * \beta = \alpha * \InducedFunctorOfSReplacementFunctor F\).~(\footnote{We suppose given a faithful functor \(H\colon \mathcal{A} \map \mathcal{B}\), a functor \(K\colon \mathcal{B} \map \mathcal{A}\), a transformation~\(\gamma\colon K \comp H \map \id_{\mathcal{A}}\) and an isotransformation~\(\delta\colon H \comp K \map \id_{\mathcal{B}}\) with \(H * \gamma = \delta * H\). Then as \(\delta\) is a transformation, we have \((H * K * \delta) \delta = (\delta * H * K) \delta\), and as~\(\delta\) is an isotransformation, we even have \(H * K * \delta = \delta * H * K = H * \gamma * K\). The faithfulness of \(H\) yields~\(K * \delta = \gamma * K\).})
\end{proof}


\bigskip

{\raggedleft Sebastian Thomas \\ sebastian.thomas@math.rwth-aachen.de \\ \url{http://www.math.rwth-aachen.de/~Sebastian.Thomas/} \\}

\end{document}